\documentclass[]{jgcc}
\pdfoutput=1
\usepackage{lastpage}
\jgccdoi{18}{2}{1}{17981}
\jgccheading{}{\pageref{LastPage}}{}{}{Apr.~8,~2026}{Apr.~21,~2026}{}

\usepackage{amssymb,amsmath}
\usepackage{hyperref}

\DeclareMathOperator{\Id}{Id}
\DeclareMathOperator{\charr}{char}
\DeclareMathOperator{\tr}{tr}
\DeclareMathOperator{\Ad}{Ad}
\DeclareMathOperator{\GL}{GL}
\DeclareMathOperator{\SL}{SL}
\DeclareMathOperator{\PGL}{PGL}

\DeclareMathOperator{\Rad}{Rad}
\DeclareMathOperator{\Aut}{Aut}
\DeclareMathOperator{\End}{End}
\DeclareMathOperator{\Inn}{Inn}

\numberwithin{equation}{section}

\newcommand{\diag}{\operatorname{diag}}
\newcommand{\Sha}{\operatorname{Sha}}

\begin{document}

\dedicatory{In memory of Evgeny Plotkin}

\title[Sha-rigidity of low-rank adjoint Chevalley groups]{Sha-rigidity of adjoint Chevalley groups of types $\mathbf A_1$, $\mathbf A_2$, $\mathbf B_2$, $\mathbf G_2$ over commutative rings}

\author[E.~Bunina]{Elena Bunina}
\address{Department of Mathematics, Bar--Ilan University, 5290002 Ramat Gan, Israel}
\thanks{\textit{2020 Mathematics Subject Classification.} Primary 20G35.}

\author[V.~Kirakosyan]{Vazgen Kirakosyan}
\address{Faculty of Mechanics and Mathematics, Lomonosov Moscow State University, 119991 Moscow, Russia}

\author[R.~Treskunov]{Rachel Treskunov}
\address{Department of Mathematics, Bar--Ilan University, 5290002 Ramat Gan, Israel}

\keywords{Chevalley groups, commutative rings, class-preserving endomorphisms, locally inner endomorphisms, Sha-rigidity}

\begin{abstract}
We prove that every class-preserving endomorphism of the adjoint Chevalley group and of its elementary subgroup over a commutative ring is inner for the types A1, A2, and B2 when 2 is invertible, and for type G2 when 2 and 3 are invertible. Consequently, all these groups are Sha-rigid.
\end{abstract}

\maketitle

\section{Introduction}

Every inner automorphism is class-preserving.
In this paper we study the converse question: when is every class-preserving endomorphism inner?

Recall that an endomorphism $\varphi\in\End(G)$ of a group $G$ is called \emph{locally inner}
(or \emph{class-preserving}, cf.~\cite{Sah}) if for every $g\in G$ the elements $g$ and $\varphi(g)$
are conjugate in~$G$.
The goal of the present paper is to understand such endomorphisms for adjoint Chevalley groups
over commutative rings.

The problem is closely related to the local--global invariant introduced by Ono~\cite{BK112,BK115}.
Let $G$ act on itself by conjugation and consider the pointed set $H^{1}(G,G)$.
Among its elements one singles out those cohomology classes whose restrictions to every cyclic subgroup
of $G$ are trivial; this subset is denoted by $\Sha(G)$ and is called the \emph{Shafarevich--Tate set} of $G$.
We say that $G$ is \emph{$\Sha$-rigid} if $\Sha(G)$ is a singleton.

A convenient way to phrase $\Sha$-rigidity (going back to Mazur, see, e.g.,~\cite{BK115,Sah,BK})
is in terms of class-preserving morphisms.
Let $\End_{c}(G)$ (resp.\ $\Aut_{c}(G)$) denote the set of endomorphisms (resp.\ automorphisms)
$\varphi$ such that $\varphi(g)$ is conjugate to $g$ for all $g\in G$.
Then $\Sha(G)$ is trivial if and only if every class-preserving endomorphism is inner, i.e.
\[
\End_{c}(G)=\Inn(G),
\]
a property often referred to as \emph{Property~E}, see, e.g.,~\cite{BK}.
Clearly $\Inn(G)\subseteq \Aut_{c}(G)\subseteq \End_{c}(G)$.

Let $\Phi$ be a reduced irreducible root system and $R$ a commutative ring.
We write $G_{\mathrm{ad}}(\Phi,R)$ for the adjoint Chevalley group and
$E_{\mathrm{ad}}(\Phi,R)$ for its elementary subgroup.
For the types $\mathbf A_1$, $\mathbf A_2$, $\mathbf B_2$ we assume throughout that $2\in R^\times$,
while for the type $\mathbf G_2$ we assume in addition that $3\in R^\times$.
The main result of this paper concerns the four low-rank types
$\mathbf A_1$, $\mathbf A_2$, $\mathbf B_2$, $\mathbf G_2$,
for which we give separate direct proofs over arbitrary commutative rings.
In particular, the argument below is not obtained by reducing to the higher-rank local-ring case.

\begin{thm}\label{thm:main}
Let $R$ be a commutative ring.

\smallskip
\noindent\textup{(1)}
Assume that $2\in R^\times$ and let $\Phi\in\{\mathbf A_1,\mathbf A_2,\mathbf B_2\}$.
Then every locally inner endomorphism of $E_{\mathrm{ad}}(\Phi,R)$ is inner.
Moreover, any locally inner endomorphism of $G_{\mathrm{ad}}(\Phi,R)$ is inner for
$\Phi\in\{\mathbf A_2,\mathbf B_2\}$, and for $\Phi=\mathbf A_1$ under the additional assumption
that $E_{\mathrm{ad}}(\mathbf A_1,R)$ is normal in $G_{\mathrm{ad}}(\mathbf A_1,R)$.

\smallskip
\noindent\textup{(2)}
Assume that $2,3\in R^\times$.
Then every locally inner endomorphism of $E_{\mathrm{ad}}(\mathbf G_2,R)$ is inner, and every locally inner
endomorphism of $G_{\mathrm{ad}}(\mathbf G_2,R)$ is inner.

\smallskip
In particular, all groups listed above are $\Sha$-rigid.
\end{thm}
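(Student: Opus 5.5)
The plan is to pin down $\varphi$ on root unipotents, correct it by a conjugation, and then show that what remains is the identity. Let $\varphi$ be a locally inner endomorphism of $G=E_{\mathrm{ad}}(\Phi,R)$ (or $G_{\mathrm{ad}}(\Phi,R)$). Since $\varphi(g)$ is conjugate to $g$ for every $g$, each $x_\alpha(t)$ is sent to a conjugate of itself. In particular $\varphi$ preserves unipotency of root elements, centralizers shrink in a controlled way, and $\varphi$ is injective: if $\varphi(g)=1$ then $g$ is conjugate to $1$, so $g=1$. I would work in the adjoint representation on the Lie algebra $L=L(\Phi,R)$, where conjugacy classes of elements can be detected by matrix invariants (traces of powers, ranks of $g-1$ over residue fields). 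This makes $2\in R^\times$, and $3\in R^\times$ for $\mathbf G_2$, natural: they are what is needed to recover $\mathfrak{sl}_2$-triples and to diagonalize the torus elements $h_\alpha(\lambda)$ in $L$.

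The first step is to normalize $\varphi$ on a fixed pair of opposite root subgroups. Because $\varphi(x_\alpha(1))=g x_\alpha(1)g^{-1}$ for some $g\in G$, I compose $\varphi$ with $\mathrm{Int}(g^{-1})$ so that $x_\alpha(1)$ is fixed for a chosen simple root $\alpha$. Then $\varphi(x_{-\alpha}(1))$ is a conjugate of $x_{-\alpha}(1)$ satisfying the relations of the $\SL_2$ or $\PGL_2$ generated with $x_\alpha(1)$. For example, $w_\alpha=x_\alpha(1)x_{-\alpha}(-1)x_\alpha(1)$ has order $2$ or $4$, and the product $x_\alpha(1)\varphi(x_{-\alpha}(-1))$ is conjugate to the original product. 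Using the centralizer of $x_\alpha(1)$ (a parabolic-type subgroup in the adjoint representation), I conjugate further by an element commuting with $x_\alpha(1)$ so that $x_{-\alpha}(1)$ is fixed as well. Repeating this for the other simple roots with centralizer arguments (rank $\le 2$ keeps this a finite check), I arrange that $\varphi$ fixes $x_{\pm\alpha}(1)$ for all simple $\alpha$. Hence it fixes $w_\alpha$ and, via the Weyl group, $x_\beta(\pm1)$ for all roots $\beta$.

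The second step is to handle arbitrary parameters $t$. Each $\varphi(x_\alpha(t))$ commutes with $\varphi(x_\gamma(1))=x_\gamma(1)$ whenever $x_\alpha(t)$ and $x_\gamma(1)$ commute, and it is conjugate to $x_\alpha(t)$. Intersecting the centralizers of the fixed elements $x_\gamma(\pm1)$ over suitable $\gamma$ forces $\varphi(x_\alpha(t))=x_\alpha(f_\alpha(t))$ times possibly something in the center, which is trivial in the adjoint case. Here $f_\alpha$ is additive by the group law. Commutator formulas, such as $[x_\alpha(t),x_\beta(1)]$ in $\mathbf A_2,\mathbf B_2,\mathbf G_2$, or conjugation of $x_\alpha(t)$ by $h_\alpha$-type elements in $\mathbf A_1$, then show that $f$ is multiplicative and independent of $\alpha$, so $f$ is a ring endomorphism of $R$. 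The final step is to exclude $f\ne\mathrm{id}$ using local innerness once more. The element $x_\alpha(t)x_{-\alpha}(1)$ has trace in the adjoint representation a polynomial $p(t)$ with invertible leading coefficients (this is where $2,3$ invertible are used). Since conjugation preserves trace, $p(f(t))=p(t)$, and I would aim to extract $f(t)=t$ from a linear term $c\,t$ with $c$ a unit. Then $\varphi$ is inner on $E$.

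For $G_{\mathrm{ad}}$, $\varphi$ restricted to $E$ is, after the correction, the identity. I then claim that any $g\in G$ satisfies $\varphi(g)g^{-1}\in C_G(E)=1$, using the normality of $E$ in $G$ and $\varphi(g)x\varphi(g)^{-1}=\varphi(gxg^{-1})=gxg^{-1}$. Normality is automatic for ranks $\ge2$, which explains the extra hypothesis for $\mathbf A_1$. The main obstacle is the first normalization step over an arbitrary ring rather than a local ring. The conjugating elements obtained pointwise need not glue, and the centralizer of $x_\alpha(1)$ in $G$ is not obviously generated by nice pieces. For this I would first try working in $\GL(L)$ and using that the set of valid conjugators is a torsor under a unipotent-by-torus group, then argue with explicit matrix equations entry by entry.
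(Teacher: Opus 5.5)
There is a genuine gap. It sits at the step you flag as the main obstacle, but it is more than a gluing problem.

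Your normalization fixes $x_{\pm\alpha}(1)$ for one simple root and then says ``repeat for the other simple roots''. It only ever uses that images of root elements, and of elements built inside rank-one subgroups, are conjugate to the originals. That information cannot exclude outer automorphisms.

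Here is an explicit obstruction in type $\mathbf A_2$. Let $\sigma$ be the graph automorphism exchanging $X_{\pm\alpha_1}$ and $X_{\pm\alpha_2}$. Let $\psi$ be $\sigma$ followed by conjugation by a representative of $s_2s_1$ and a suitable torus element. Then:
\begin{itemize}
\item $\psi$ fixes $x_{\pm\alpha_1}(t)$ for all $t$;
\item $\psi$ maps every root subgroup onto a root subgroup, so every root element goes to a conjugate; on each $\langle X_\gamma,X_{-\gamma}\rangle$ it agrees with conjugation by an element of $N(T)$;
\item but $\psi(x_{\alpha_2}(1))=x_{-\alpha_1-\alpha_2}(\pm1)$.
\end{itemize}
The centralizer of $\langle X_{\alpha_1},X_{-\alpha_1}\rangle$ is a torus (over a field, $\{\diag(a,a,b)\}$ modulo scalars), and it preserves every root subgroup. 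So no conjugation compatible with your first normalization can fix $x_{\alpha_2}(1)$. Since $\psi$ is not inner, your ``finite check'' necessarily ends in a second branch. That branch can only be killed by applying local innerness to a well-chosen element that mixes both simple roots and is not unipotent.

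This is the key ingredient the paper supplies and your outline lacks. The paper normalizes on the regular unipotent $X_0=x_{\alpha_1}(1)x_{\alpha_2}(1)$, then determines the images of the positive root elements and of the involutions $h_{\alpha_i}(-1)$. This yields exactly two cases, the second being the graph automorphism modulo the radical. That case is excluded using $Y=x_{\alpha_1}(1)w_{\alpha_1+\alpha_2}(1)x_{\alpha_2}(1)$: conjugacy of $Y$ with its image in $\PGL_3$ forces $\lambda^3=1$ and $2\lambda=1$, i.e.\ $7=0$. A separate element built from $\widetilde h_\alpha(3)$ handles the case where all residue fields have characteristic $7$.

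Normalizing on $X_0$ rather than on $x_\alpha(1)$ also removes your gluing worry. $C(X_0)$ lies in $U$ and is the small group $\{x_{\alpha_1}(a)x_{\alpha_2}(a)x_{\alpha_1+\alpha_2}(b)\}$. The images are therefore pinned down by their coordinates in $U$, which are elements of $R$. Localizations are used only to derive identities among these coordinates, and every conjugation actually performed is by an explicit element of $C(X_0)$ or $U$ with global parameters. So local conjugators never have to be patched.

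Your last step also fails as stated under the hypotheses of the theorem. For a long root, the adjoint trace of $x_\alpha(t)x_{-\alpha}(1)$ is $t^2\pm6t+8$ in type $\mathbf A_2$ and $t^2\pm6t+10$ in type $\mathbf B_2$. The linear coefficient $\pm6$ is a unit only if $3\in R^\times$, whereas only $2\in R^\times$ is assumed for these types. Use the quadratic term instead:
\begin{itemize}
\item additivity of $f$ gives $p(f(t))+p(f(-t))=p(t)+p(-t)$, hence $f(t)^2=t^2$;
\item replacing $t$ by $t+1$ and using $f(1)=1$ gives $2f(t)=2t$.
\end{itemize}
This is the paper's symmetric-difference trick, and it needs only $2\in R^\times$.
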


\medskip
\noindent\textbf{A remark on earlier work.}
For local rings and $\mathrm{rank}(\Phi)>1$, $\Sha$-rigidity of the elementary subgroup (under mild
small-denominator assumptions) was claimed in~\cite{BuninaKunyavskii2024}.
We would like to point out that the proof given there contains a gap:
since a locally inner endomorphism is automatically injective, the argument effectively requires
a structural description of \emph{injective endomorphisms} of Chevalley groups over local rings,
whereas the available input concerns \emph{automorphisms}.
At present, the needed description of injective endomorphisms in the required generality has not been established,
so the cited proof uses an ingredient that is not yet justified.
We expect that the higher-rank statement over local rings can nevertheless be recovered by the same general strategy,
provided one additionally verifies that each step remains valid for injective endomorphisms.
This issue, however, is separate from the proofs given in the present paper.

\medskip
\noindent\textbf{Independence of the present proof.}
The proofs below for the low-rank types $\mathbf A_1$, $\mathbf A_2$, $\mathbf B_2$, and $\mathbf G_2$
are independent of the argument in~\cite{BuninaKunyavskii2024}.
Even in the local-ring case, our proof is different and self-contained.
It does \emph{not} use a classification theorem for automorphisms,
nor any structural description of injective endomorphisms of Chevalley groups over local rings.
Thus the gap mentioned above does not affect the validity of the results proved here.

\medskip
\noindent\textbf{What is new here.}
In the present paper we prove $\Sha$-rigidity for the low-rank types
$\mathbf A_1$, $\mathbf A_2$, $\mathbf B_2$ over arbitrary commutative rings with $2\in R^\times$
and for $\mathbf G_2$ over arbitrary commutative rings with $2,3\in R^\times$
by a direct analysis of locally inner endomorphisms.
In particular, for these four root systems the argument is self-contained.
We also expect that the approach developed here extends to all reduced irreducible root systems over
commutative rings, although in several parts of the proof one will inevitably need
to treat different root systems separately.

\medskip\noindent\textbf{Organization.}
Section~2 fixes notation and recalls standard facts on Chevalley groups over commutative and local rings.
Sections~3--6 treat the types $\mathbf A_1$, $\mathbf A_2$, $\mathbf B_2$, and $\mathbf G_2$, respectively.

\bigskip

\section{Chevalley groups over commutative rings}\leavevmode\label{sec:Chev}

We fix a reduced irreducible root system $\Phi$ and a commutative ring $R$ with~$1$.
For any lattice $\mathcal P$ with $Q(\Phi)\subseteq \mathcal P \subseteq P(\Phi)$ we denote by
$G_{\mathcal P}(\Phi,R)$ the Chevalley group of type $(\Phi,\mathcal P)$ over $R$
(the group of $R$-points of the corresponding split Chevalley--Demazure group scheme).
The extreme cases $\mathcal P=Q(\Phi)$ and $\mathcal P=P(\Phi)$ give the adjoint and simply connected
forms, denoted $G_{\mathrm{ad}}(\Phi,R)$ and $G_{\mathrm{sc}}(\Phi,R)$, respectively.
Unless explicitly stated otherwise, we work with the adjoint form $G_{\mathrm{ad}}(\Phi,R)$
(and with its elementary subgroup), cf.~\cite{Carter,Steinberg}.

\medskip
\noindent\textbf{Root subgroups and the elementary subgroup.}
Fix a split maximal torus $T\le G=G_{\mathcal P}(\Phi,R)$ and identify $\Phi$ with $\Phi(G,T)$.
This determines the root subgroups $X_\alpha\le G$ ($\alpha\in\Phi$) and parametrizations
$x_\alpha\colon R\to X_\alpha$, $t\mapsto x_\alpha(t)$, satisfying the Chevalley commutator formula
(with integral structure constants), see~\cite{Carter,Steinberg}:
\begin{equation}\label{eq:Chev-comm}
[x_\alpha(t),x_\beta(u)]
=\prod_{i,j>0} x_{i\alpha+j\beta}\!\bigl(N_{\alpha\beta ij}\,t^i u^j\bigr),
\qquad (\alpha+\beta\neq 0).
\end{equation}
The \emph{elementary subgroup} is
\[
E(\Phi,R)=\bigl\langle\,x_\alpha(t)\ \bigm|\ \alpha\in\Phi,\ t\in R\,\bigr\rangle.
\]
When we need to emphasize the isogeny type, we write $E_{\mathrm{ad}}(\Phi,R)$ and $E_{\mathrm{sc}}(\Phi,R)$.

\medskip
\noindent\textbf{Standard subgroups.}
Choose a system of positive roots $\Phi^+$ with simple roots $\Delta$.
Put
\[
U=U(\Phi,R)=\langle x_\alpha(t)\mid \alpha\in\Phi^+,\ t\in R\rangle,\qquad
V=V(\Phi,R)=\langle x_{-\alpha}(t)\mid \alpha\in\Phi^+,\ t\in R\rangle.
\]
For $t\in R^\times$ and $\alpha\in\Phi$ set
\[
w_\alpha(t)=x_\alpha(t)x_{-\alpha}(-t^{-1})x_\alpha(t),\qquad
h_\alpha(t)=w_\alpha(t)w_\alpha(1)^{-1},
\]
and let $H=H(\Phi,R)=\langle\,h_\alpha(t)\mid \alpha\in\Phi,\ t\in R^\times\,\rangle$.
Then $H=T\cap E$, and $N_E(H)/H$ is canonically isomorphic to the Weyl group $W(\Phi)$,
see, e.g.,~\cite{Steinberg,Carter}.

\medskip
\noindent\textbf{Bruhat and Gauss decompositions.}
If $R$ is a field, every $g\in G(\Phi,R)$ admits the Bruhat decomposition
\begin{equation}\label{eq:Bruhat}
g=t\,u\,\mathbf w\,u', \qquad t\in T,\ \mathbf w\in W(\Phi),\ u,u'\in U,
\end{equation}
and $t\in H$ whenever $g\in E$, see, e.g.,~\cite{Carter,Steinberg}.
If $R$ is a (not necessarily field) \emph{local} ring, we will use the Gauss decomposition
\begin{equation}\label{eq:Gauss}
g=t\,u_1\,v\,u_2\qquad (t\in T,\ u_1,u_2\in U,\ v\in V),
\end{equation}
and for $g\in E$ one has $t\in H$, see, e.g.,~\cite{Abe1969}. We will only need existence of such factorizations
(and the standard uniqueness of the ordered factorization inside $U$ and inside $V$ once an order
on $\Phi^+$ is fixed).

\medskip
\noindent\textbf{Rank-one calculus.}
For any root $\alpha$ the subgroup $\langle X_\alpha,X_{-\alpha}\rangle$ is a rank-one
Chevalley subgroup. In particular, whenever $1+uv\in R^\times$ one has the standard identity
\begin{equation}\label{eq:rankone}
x_{-\alpha}(u)\,x_\alpha(v)
=
x_\alpha\!\left(\frac{v}{1+uv}\right)\,h_\alpha(1+uv)\,
x_{-\alpha}\!\left(\frac{u}{1+uv}\right),
\end{equation}
see, e.g.,~\cite{Steinberg}. Over a local ring $(R,\mathfrak m)$ this applies in particular to
$u\in\mathfrak m$, since $1+\mathfrak m\subseteq R^\times$.

\medskip
\noindent\textbf{Localization and reduction to local rings.}
For a prime ideal $\mathfrak p\subset R$ write $R_{\mathfrak p}=(R\setminus\mathfrak p)^{-1}R$.
If $\mathfrak m$ is maximal, then $R_{\mathfrak m}$ is a local ring with maximal ideal
$\Rad R_{\mathfrak m}$, and we denote the residue field by
$k(\mathfrak m)=R_{\mathfrak m}/\Rad R_{\mathfrak m}$.

\begin{prop}\label{inlocal}
Every commutative ring $R$ with $1$ embeds into the product of its localizations at maximal ideals
\[
S=\prod_{\mathfrak m\in\mathrm{MaxSpec}(R)} R_{\mathfrak m}
\]
via the diagonal map $a\mapsto \bigl(\frac{a}{1}\bigr)_{\mathfrak m}$,
see, e.g.,~\cite{AtiyahMacdonald}.
\end{prop}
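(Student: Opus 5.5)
We need to show that the diagonal map $\iota\colon R\to S=\prod_{\mathfrak m}R_{\mathfrak m}$, $a\mapsto (a/1)_{\mathfrak m}$, is an injective ring homomorphism. The plan is to check the ring-map property first and then reduce injectivity to a statement about annihilators.

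Each localization map $R\to R_{\mathfrak m}$ is a unital ring homomorphism. By the universal property of the product, the induced map $\iota$ into $S$ is therefore a ring homomorphism. So it is enough to show that $\ker\iota=0$.

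Let $a\in R$ satisfy $a/1=0$ in $R_{\mathfrak m}$ for every maximal ideal $\mathfrak m$. By the definition of equality in $(R\setminus\mathfrak m)^{-1}R$, for each $\mathfrak m$ there is some $s_{\mathfrak m}\in R\setminus\mathfrak m$ with $s_{\mathfrak m}a=0$. Consider the annihilator
\[
I=\operatorname{Ann}_R(a)=\{r\in R\mid ra=0\}.
\]
This is an ideal of $R$. It contains $s_{\mathfrak m}\notin\mathfrak m$ for every maximal $\mathfrak m$, so $I$ lies in no maximal ideal.

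Because $R$ has $1$, Zorn's lemma shows that every proper ideal is contained in a maximal ideal. Hence $I=R$, so $1\in I$, which means $a=1\cdot a=0$. Thus $\iota$ is injective.

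The only point that needs any care is the existence of maximal ideals containing a given proper ideal, that is, the use of Zorn's lemma. If $R$ is the zero ring, then $\operatorname{MaxSpec}(R)$ is empty, $S$ is the empty product (the zero ring), and the statement holds trivially.
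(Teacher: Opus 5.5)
Your proof is correct and matches the paper's approach. The paper gives no proof of its own and only cites Atiyah--Macdonald, where injectivity is obtained exactly as you do: $\operatorname{Ann}_R(a)$ lies in no maximal ideal, so it equals $R$ and hence $a=0$.
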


By functoriality, this yields embeddings
$G(\Phi,R)\hookrightarrow \prod_{\mathfrak m} G(\Phi,R_{\mathfrak m})$
(and similarly for $E(\Phi,R)$). We will repeatedly reduce arguments to the local rings $R_{\mathfrak m}$
and further to the residue fields $k(\mathfrak m)$.

\medskip
\noindent\textbf{Normality of the elementary subgroup (rank $\ge 2$).}
If $\Phi$ has rank $\ge 2$, then $E(\Phi,R)\trianglelefteq G(\Phi,R)$ for all commutative rings $R$
(Suslin--Kopeiko--Taddei, see~\cite{v41}). For rank $1$ (type $\mathbf A_1$) normality may fail in general.

\medskip
\noindent\textbf{A trace identity (types $\mathbf A_2$ and $\mathbf B_2$).}
Let $\Ad\colon G_{\mathrm{ad}}(\Phi,R)\to \GL(\mathfrak g_R)$ be the adjoint representation.
For a long root $\alpha$ the trace $\tr(\Ad(x_\alpha(t)x_{-\alpha}(s)))$ is a polynomial in $s,t$ with integer
coefficients. In particular, for the two rank-$2$ types used in Sections~4 and~5 we have:
\begin{equation}\label{eq:traceA2B2}
\tr\!\bigl(x_\alpha(t)x_{-\alpha}(s)\bigr)=
\begin{cases}
s^2t^2-6st+8, & \Phi=\mathbf A_2,\\
s^2t^2-6st+10,& \Phi=\mathbf B_2,
\end{cases}
\qquad (s,t\in R),
\end{equation}
where the trace is taken in the adjoint representation.

\section{\texorpdfstring{$\Sha$-Rigidity of the adjoint Chevalley groups of type~$\mathbf A_1$}{Sha-Rigidity of the adjoint Chevalley groups of type A1}}\label{sec:A1}

Throughout this section $R$ is a commutative ring with $1/2\in R$, and
\[
E(\mathbf A_1)=E_{\mathrm{ad}}(\mathbf A_1,R)
\]
denotes the adjoint elementary Chevalley group of type $\mathbf A_1$ in the standard $3\times3$ realization.
It is generated by the root unipotents
\[
x_\alpha(t)=\begin{pmatrix}
1& t^2 & 2t\\
0& 1& 0\\
0& t& 1
\end{pmatrix},\qquad
x_{-\alpha}(t)=\begin{pmatrix}
1& 0& 0\\
t^2& 1& 2t\\
t& 0& 1
\end{pmatrix}\qquad (t\in R).
\]

\subsection*{Rank-one relations (used implicitly)}
We will use the standard relations in type $\mathbf A_1$:
\begin{enumerate}
\item $x_{\pm\alpha}(t)x_{\pm\alpha}(s)=x_{\pm\alpha}(t+s)$, hence $x_{\pm\alpha}(t)^{-1}=x_{\pm\alpha}(-t)$;
\item for $u\in R^*$,
\[
h_\alpha(u)x_\alpha(t)h_\alpha(u)^{-1}=x_\alpha(u^2t),\qquad
h_\alpha(u)x_{-\alpha}(t)h_\alpha(u)^{-1}=x_{-\alpha}(u^{-2}t);
\]
\item the standard rank-one factorization
\[
x_\alpha(t)\,x_{-\alpha}(s)
= x_{-\alpha}\!\left( \frac{s}{1+ts} \right)\,
h_\alpha(1+ts)\,
x_\alpha\!\left( \frac{t}{1+ts} \right)\qquad (1+ts\in R^*).
\]
\end{enumerate}

\subsection*{Set-up and normalization}
Let $\varphi\colon E(\mathbf A_1)\to E(\mathbf A_1)$ be a \emph{locally inner} endomorphism, i.e. $\varphi(g)$ is conjugate to $g$
for every $g\in E(\mathbf A_1)$.
Composing $\varphi$ with a suitable inner automorphism $i_y$ (conjugation by $y\in E(\mathbf A_1)$),
we may assume
\begin{equation}\label{eq:norm}
\varphi(x_\alpha(1))=x_{\alpha}(1).
\end{equation}
Then automatically $\varphi(x_\alpha(-1))=\varphi(x_\alpha(1)^{-1})=x_\alpha(-1)$.

\begin{rem}[Trace invariance]\label{rem:trace}
Since $\varphi(g)$ is conjugate to $g$ for every $g\in E(\mathbf A_1)$,
we will freely use $\tr(\varphi(X))=\tr(X)$ for all $X\in E(\mathbf A_1)$.
\end{rem}

\subsection*{\texorpdfstring{Step 1: Trace constraints for $\varphi(x_{-\alpha}(s))$}{Step 1: Trace constraints for phi(x-alpha(s))}}
Write
\[
\varphi(x_{-\alpha}(s))=
\begin{pmatrix}
a& b& c\\
d& e& f\\
g& h& i
\end{pmatrix},
\qquad a+e+i=\tr(\varphi(x_{-\alpha}(s)))=\tr(x_{-\alpha}(s))=3.
\]
A direct multiplication shows that for all $s,t\in R$,
\begin{equation}\label{eq:traceA1}
\tr\bigl(x_{-\alpha}(s)x_\alpha(t)\bigr)=s^2t^2+4st+3.
\end{equation}

Now evaluate at $t=\pm1$. Using \eqref{eq:norm} (and hence $\varphi(x_\alpha(\pm1))=x_\alpha(\pm1)$) we get
\[
\tr\bigl(\varphi(x_{-\alpha}(s)x_\alpha(\pm1))\bigr)
=\tr\bigl(\varphi(x_{-\alpha}(s))\,x_\alpha(\pm1)\bigr)
=(a+e+i)+d\pm(f+2g).
\]
Comparing with \eqref{eq:traceA1} for $t=\pm1$ yields the system
\[
d+(f+2g)=s^2+4s,\qquad d-(f+2g)=s^2-4s,
\]
hence
\begin{equation}\label{eq:d-g}
d=s^2,\qquad f+2g=4s.
\end{equation}

\subsection*{\texorpdfstring{Step 2: The centralizer of $x_\alpha(1)$}{Step 2: The centralizer of xalpha(1)}}
\begin{lem}\label{lem:cent}
The centralizer of $x_\alpha(1)$ in $\mathrm{M}_3(R)$ consists precisely of the matrices
\[
X=\begin{pmatrix}
a& b& 2h\\
0& a& 0\\
0& h& a
\end{pmatrix}\qquad (a,b,h\in R).
\]
\end{lem}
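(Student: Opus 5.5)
Write $N=x_\alpha(1)-I$. The matrices commuting with $x_\alpha(1)$ are exactly those commuting with $N$, and
\[
N=\begin{pmatrix}0&1&2\\0&0&0\\0&1&0\end{pmatrix}.
\]
So the plan is to take a general $X=(x_{ij})\in\mathrm M_3(R)$, write out the linear system $XN=NX$ entry by entry, and solve it using only that $2\in R^\times$. The computation works over an arbitrary commutative ring, since the system is linear with integer coefficients and the only division needed is by $2$.

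First compute the two products.
\begin{itemize}
\item $XN$ has first column zero, second column $x_{\cdot 2}+x_{\cdot 3}$, and third column $2x_{\cdot 1}$. Here $x_{\cdot j}$ denotes the $j$-th column of $X$.
\item $NX$ has first row $x_{2\cdot}+2x_{3\cdot}$, second row zero, and third row $x_{2\cdot}$. Here $x_{i\cdot}$ denotes the $i$-th row of $X$.
\end{itemize}

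Now compare entries.
\begin{enumerate}
\item The second row of $NX$ is zero. Matching with $XN$ gives $x_{22}+x_{23}=0$ and $2x_{21}=0$. Hence $x_{21}=0$ and $x_{23}=-x_{22}$.
\item The first column of $XN$ is zero. Matching with $NX$ gives $x_{21}+2x_{31}=0$ and $x_{21}=0$. Hence $x_{31}=0$.
\item The third-row entries, comparing $NX$ with $XN$, give three equations:
\begin{itemize}
\item $x_{21}=0$;
\item $x_{22}=x_{32}+x_{33}$;
\item $x_{23}=2x_{31}=0$.
\end{itemize}
Therefore $x_{23}=0$, and then $x_{22}=0$ by step 1. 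It follows that $x_{32}=-x_{33}$.
\end{enumerate}

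At this point something is inconsistent: the claimed centralizer contains the identity, whose entries $x_{22}=x_{33}=1$ violate these deductions. This indicates an indexing slip in my products, so the first real step is to recompute them carefully. Correctly, $(XN)_{ij}=\sum_k x_{ik}N_{kj}$, so its columns are
\[
0,\qquad x_{\cdot 1}+x_{\cdot 3},\qquad 2x_{\cdot 1}.
\]
Likewise $(NX)_{ij}=\sum_k N_{ik}x_{kj}$, so its rows are
\[
x_{2\cdot}+2x_{3\cdot},\qquad 0,\qquad x_{2\cdot}.
\]

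Comparing entries with the corrected products:
\begin{enumerate}
\item The second row gives $x_{21}+x_{23}=0$ and $2x_{21}=0$. Hence $x_{21}=x_{23}=0$.
\item The first column gives $x_{21}+2x_{31}=0$. Hence $x_{31}=0$, using that $2$ is invertible.
\item The third row gives three equations:
\begin{itemize}
\item $x_{21}=0$;
\item $x_{22}=x_{31}+x_{33}$, so $x_{22}=x_{33}$;
\item $x_{23}=2x_{31}$, which is consistent with the above.
\end{itemize}
\item The first row gives three equations:
\begin{itemize}
\item $x_{21}+2x_{31}=0$;
\item $x_{22}+2x_{32}=x_{11}+x_{13}$;
\item $x_{23}+2x_{33}=2x_{11}$, so $x_{33}=x_{11}$.
\end{itemize}
\end{enumerate}

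Put $a=x_{11}$, $h=x_{32}$ and $b=x_{12}$. The first-row equation $x_{22}+2x_{32}=x_{11}+x_{13}$ then yields $x_{13}=2h$.

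The converse is a direct multiplication check: every matrix of the displayed form commutes with $N$, and hence with $x_\alpha(1)$.

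The only point needing care is bookkeeping of the indices in the products. The hypothesis $1/2\in R$ enters exactly once, to cancel the factor $2$ in $2x_{31}=0$. (The equation $2x_{21}=0$ is not needed, since $x_{21}+x_{23}=0$ together with $x_{23}=2x_{31}=0$ already gives $x_{21}=0$.)
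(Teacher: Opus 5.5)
Your proof is correct and follows the paper's own route: an entry-by-entry comparison of $Xx_\alpha(1)=x_\alpha(1)X$ (rewriting it as $XN=NX$ with $N=x_\alpha(1)-I$ changes nothing), giving $x_{21}=x_{23}=x_{31}=0$, $x_{11}=x_{22}=x_{33}$, $x_{13}=2x_{32}$. In the write-up, drop the first mis-indexed computation and correct the closing remark: $1/2\in R$ is needed twice, since you also cancel $2$ in $2x_{11}=2x_{33}$ at the $(1,3)$-entry; and $x_{21}=0$ comes directly from the $(3,1)$-entry, which is the non-circular reason $2x_{21}=0$ is superfluous (your stated reason uses $2x_{31}=0$, which itself requires $x_{21}=0$).
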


\begin{proof}
Let $X=(x_{ij})\in \mathrm{M}_3(R)$ commute with $x_\alpha(1)$.
Writing out $Xx_\alpha(1)=x_\alpha(1)X$ and comparing entries gives consecutively
$x_{21}=x_{23}=0$, then $x_{31}=0$, then $x_{22}=x_{11}=x_{33}$, and finally $x_{13}=2x_{32}$.
Renaming $a=x_{11}$, $b=x_{12}$, $h=x_{32}$ yields the claimed form.
\end{proof}

Since $x_\alpha(t)$ commutes with $x_\alpha(1)$, the element $\varphi(x_\alpha(t))$ commutes with
$\varphi(x_\alpha(1))=x_\alpha(1)$. Hence by Lemma~\ref{lem:cent},
\begin{equation}\label{eq:phi-xa}
\varphi(x_\alpha(t))=
\begin{pmatrix}
\alpha& \beta& 2\gamma\\
0& \alpha& 0\\
0& \gamma& \alpha
\end{pmatrix},
\qquad 3\alpha=\tr(\varphi(x_\alpha(t)))=\tr(x_\alpha(t))=3.
\end{equation}

\subsection*{\texorpdfstring{Step 3: Determining $\beta(t)$ and $\gamma(t)$}{Step 3: Determining beta(t) and gamma(t)}}
Apply trace invariance to $x_{-\alpha}(\pm1)x_\alpha(t)$:
\[
\tr\bigl(\varphi(x_{-\alpha}(\pm1)x_\alpha(t))\bigr)
=\tr\bigl(\varphi(x_{-\alpha}(\pm1))\,\varphi(x_\alpha(t))\bigr).
\]
By \eqref{eq:d-g} for $s=\pm1$, the matrix $\varphi(x_{-\alpha}(\pm1))$ satisfies $d=1$ and $f+2g=\pm4$,
so multiplying with \eqref{eq:phi-xa} and taking trace gives
\[
\tr\bigl(\varphi(x_{-\alpha}(\pm1))\,\varphi(x_\alpha(t))\bigr)=3\alpha+\beta\pm4\gamma=3+\beta\pm4\gamma.
\]
On the other hand, \eqref{eq:traceA1} gives $\tr(x_{-\alpha}(\pm1)x_\alpha(t))=t^2\pm4t+3$.
Therefore,
\begin{equation}\label{eq:beta-gamma}
\beta=t^2,\qquad \gamma=t.
\end{equation}

\subsection*{\texorpdfstring{Step 4: Normalizing $\varphi(x_{-\alpha}(1))$ by conjugation}{Step 4: Normalizing phi(x-alpha(1)) by conjugation}}
Write (using \eqref{eq:d-g} with $s=1$)
\[
\varphi(x_{-\alpha}(1))=
\begin{pmatrix}
a& b& c\\
1& e& 4-2g\\
g& h& i
\end{pmatrix},
\qquad a+e+i=3.
\]
Conjugating $\varphi$ by an element $x_\alpha(\mu)$ does not change $\varphi(x_\alpha(t))$
(because all matrices in Lemma~\ref{lem:cent} commute with $x_\alpha(\mu)$).
More precisely, replacing $\varphi$ by $i_{x_\alpha(\mu)^{-1}}\circ \varphi$ sends the $(3,1)$-entry $g$ to $g-\mu$.
Choosing $\mu=g-1$ we achieve $g=1$, and then \eqref{eq:d-g} forces $f=2$.
Thus we may assume
\begin{equation}\label{eq:phi-x-1}
\varphi(x_{-\alpha}(1))=
\begin{pmatrix}
a& b& c\\
1& e& 2\\
1& h& i
\end{pmatrix},
\qquad a+e+i=3.
\end{equation}

\subsection*{Step 5: A quadratic relation and its consequences}
Use the standard rank-one identity
\begin{equation}\label{eq:quad}
\bigl(x_\alpha (-1)\,x_{-\alpha}(1)\, x_\alpha (-1)\bigr)^2=1.
\end{equation}
Applying $\varphi$ to \eqref{eq:quad} and using \eqref{eq:norm}, \eqref{eq:phi-x-1}, we obtain
\[
1=\Bigl(x_\alpha(-1)\,\varphi(x_{-\alpha}(1))\,x_\alpha(-1)\Bigr)^2.
\]
Computing the conjugate $x_\alpha(-1)\,\varphi(x_{-\alpha}(1))\,x_\alpha(-1)$ explicitly and then squaring,
the $(2,1)$-entry gives $a+e=2$, hence $i=1$ (since $a+e+i=3$).
Next, the $(3,1)$-entry yields $a=1-h$, so $e=1+h$.
Substituting back and comparing the remaining entries gives
\[
c=-2h,\qquad b=-h^2,
\]
and therefore
\begin{equation}\label{eq:phi-x-1-final}
\varphi (x_{-\alpha}(1))=
\begin{pmatrix}
1-h& -h^2& -2h\\
1& 1+h& 2\\
1& h& 1
\end{pmatrix}.
\end{equation}

\subsection*{Step 6: A conjugacy reduction and an obstruction}
Note the factorization
\[
\varphi (x_{-\alpha}(1))=
\begin{pmatrix}
1& -h& 0\\
0& 1& 0\\
0& 0& 1
\end{pmatrix}
x_{-\alpha}(1)
\begin{pmatrix}
1& h& 0\\
0& 1& 0\\
0& 0& 1
\end{pmatrix}.
\]
Set $P=\begin{pmatrix}1&h&0\\0&1&0\\0&0&1\end{pmatrix}$.
Then $\varphi(x_{-\alpha}(-1))=P^{-1}x_{-\alpha}(-1)P$ in $\GL_3(R)$.
Since $\varphi(x_{-\alpha}(-1))$ is conjugate to $x_{-\alpha}(-1)$ inside $E(\mathbf A_1)$,
there exists $A\in E(\mathbf A_1)$ such that
$A^{-1}x_{-\alpha}(-1)A=\varphi(x_{-\alpha}(-1))$.
Hence $C:=AP^{-1}$ centralizes $x_{-\alpha}(-1)$, and we can write
\[
A=C\,P,\qquad Cx_{-\alpha}(-1)=x_{-\alpha}(-1)C.
\]
Arguing as in Lemma~\ref{lem:cent}, one checks that the centralizer of $x_{-\alpha}(-1)$ in $\mathrm{M}_3(R)$
consists of matrices of the form
\[
C=\begin{pmatrix}
a& 0& 0\\
b& a& 2c\\
c& 0& a
\end{pmatrix}
=a\begin{pmatrix}
1& 0& 0\\
d& 1& 0\\
0& 0& 1
\end{pmatrix}x_{-\alpha}(c/a)
\qquad(a\in R^*).
\]
Therefore, up to the factor $x_{-\alpha}(c/a)\in E(\mathbf A_1)$, we arrive at
\[
B:=a\begin{pmatrix}
1& h& 0\\
0& 1& 0\\
0& 0& 1
\end{pmatrix}
\begin{pmatrix}
1& 0& 0\\
d& 1& 0\\
0& 0& 1
\end{pmatrix}
=
\begin{pmatrix}
a(1+hd)& ah& 0\\
ad& a& 0\\
0& 0& a
\end{pmatrix}\in E(\mathbf A_1)\ \text{or }\ G(\mathbf A_1).
\]

\begin{lem}\label{lem:impossible}
The situation
\[
B=
\begin{pmatrix}
a(1+hd)& ah& 0\\
ad& a& 0\\
0& 0& a
\end{pmatrix}\in G(\mathbf A_1)
\]
is impossible for $h,d\neq 0$ and $a\neq 1$.
\end{lem}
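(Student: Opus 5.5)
The plan is to find polynomial invariants that every element of $G(\mathbf A_1)=G_{\mathrm{ad}}(\mathbf A_1,R)$ satisfies in the $3\times3$ realization, and to show that $B$ violates them unless $h=0$ and $a=1$. The natural invariants for the adjoint representation are an invariant symmetric bilinear form (the Killing form) and the determinant. Since $2\in R^\times$, this form is nondegenerate.

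\emph{Step 1: find the invariant form.} Let the matrices act on column vectors $v=(v_1,v_2,v_3)^{T}$ and put
\[
Q(v)=v_1v_2-v_3^2,\qquad Q(v)=v^{T}Mv,\quad M=\begin{pmatrix}0&\tfrac12&0\\ \tfrac12&0&0\\ 0&0&-1\end{pmatrix}.
\]
We check $Q$ on the generators. Under $x_\alpha(t)$ we have $v_1'=v_1+t^2v_2+2tv_3$, $v_2'=v_2$, $v_3'=v_3+tv_2$. Hence $v_1'v_2'-v_3'^2=v_1v_2-v_3^2$, because the terms $t^2v_2^2$ and $2tv_2v_3$ cancel. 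The computation for $x_{-\alpha}(t)$ is symmetric. Both generators also have determinant $1$.

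\emph{Step 2: pass from $E$ to $G$.} We need the relations $g^{T}Mg=M$ and $\det g=1$ for all $g\in G(\mathbf A_1)$, not only for $g\in E(\mathbf A_1)$. These relations are polynomial identities with coefficients in $\mathbb Z[\tfrac12]$. They hold on the root subgroups, which generate $G(\mathbf A_1,\overline{\mathbb Q})$; the point is that over an algebraically closed field the group $G(\mathbf A_1,\overline{\mathbb Q})$ is generated by its root subgroups. Since the group scheme is integral, flat and reduced over $\mathbb Z$, the identities therefore hold on the coordinate ring, and hence on $G(\mathbf A_1,R)$ for every $R$ with $2\in R^\times$. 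Equivalently, the adjoint representation identifies $G_{\mathrm{ad}}(\mathbf A_1)=\PGL_2$ with $\mathrm{SO}(Q)$. I expect this justification to be the only delicate point. Everything else is a direct computation, and if needed one can cite the standard isomorphism $\PGL_2\cong\mathrm{SO}_3$.

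\emph{Step 3: evaluate on $B$.} For $B$ we have $v_1'=a(1+hd)v_1+ahv_2$, $v_2'=adv_1+av_2$, $v_3'=av_3$. This gives
\[
Q(Bv)=a^2\bigl((1+hd)v_1+hv_2\bigr)(dv_1+v_2)-a^2v_3^2 .
\]
Comparing coefficients with $Q(v)$:
\begin{itemize}
\item the coefficient of $v_3^2$ forces $a^2=1$;
\item the coefficient of $v_2^2$ gives $a^2h=0$, so $h=0$;
\item the coefficient of $v_1^2$ gives $d(1+hd)=0$, hence $d=0$ once $h=0$.
\end{itemize}
Finally, $\det B=a^3\bigl((1+hd)-hd\bigr)=a^3$, which must equal $1$. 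Together with $a^2=1$ this yields $a=1$.

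Thus $B\in G(\mathbf A_1)$ forces $h=d=0$ and $a=1$, so $B=1$, contradicting the hypotheses $h,d\neq0$, $a\neq1$. In fact the argument proves more than stated: $B\in G(\mathbf A_1)$ forces $h=d=0$ and $a=1$ outright. This stronger form is what Step 6 needs, since it gives $A=x_{-\alpha}(c)P$ with $P=1$, so that $\varphi(x_{-\alpha}(1))=x_{-\alpha}(1)$.
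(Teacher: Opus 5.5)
Your proof is correct. It reaches the same conclusion as the paper's proof, namely that $B\in G(\mathbf A_1)$ forces $h=d=0$ and $a=1$, but by a genuinely different route.

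\textbf{The paper's route.} The paper embeds $R$ into $\prod_{\mathfrak m}R_{\mathfrak m}$ and works locally. Over each local ring it uses the Gauss decomposition $G=TUVU$ to write $B_{\mathfrak m}=\diag(t,t^{-1},1)\,x_\alpha(p)\,x_{-\alpha}(q)\,x_\alpha(r)$, and then compares matrix entries. The $(3,1)$ and $(2,3)$ entries give $q(1+pq)=q(1+qr)=0$. The invertibility of $a$ (the diagonal entries of $B$) rules out $q\in R_{\mathfrak m}^\times$, so $q$ lies in the maximal ideal and hence $q=0$. Then $B_{\mathfrak m}\in TU$, and the remaining entries force $d=0$, $h=0$, $a=1$.

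\textbf{Your route.} You replace all of this by global invariants of the adjoint representation: the form $Q=v_1v_2-v_3^2$ (a scalar multiple of the Killing form) and the determinant. In other words, you use the inclusion $G_{\mathrm{ad}}(\mathbf A_1,R)\subseteq \mathrm{SO}(Q)(R)$. After that, three coefficient comparisons in $B^{T}MB=M$ together with $\det B=a^3$ finish the argument. You need no localization, no decomposition, and no prior knowledge that $a$ is a unit, since $a^2=1$ comes out of the $v_3^2$ coefficient.

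\textbf{What each costs.} Your Step~2 carries the weight: invariance of $Q$ and of $\det$ must hold on all of $G(\mathbf A_1,R)$, not just on $E(\mathbf A_1,R)$. Your flatness and density argument handles this correctly, as would citing $\PGL_2\cong\mathrm{SO}_3$ over $\mathbb Z[\tfrac12]$. The paper's route needs only the existence of the Gauss decomposition over local rings plus explicit entry computations.

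\textbf{A possible simplification.} You can make Step~2 elementary using the paper's own tools. Over each $R_{\mathfrak m}$ every element lies in $TUVU$. Each factor visibly preserves $Q$ and has determinant $1$; for the torus factor $\diag(t,t^{-1},1)$ this is immediate. The identities $g^{T}Mg=M$ and $\det g=1$ then descend to $R$ through the injective map $R\hookrightarrow\prod_{\mathfrak m}R_{\mathfrak m}$. This combines the transparency of your invariant-form argument with the paper's minimal input.
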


\begin{proof}
Embed $R$ into the product of its localizations $S=\prod_{\mathfrak m} R_{\mathfrak m}$.
It suffices to show that for each maximal ideal $\mathfrak m$, the image $B_{\mathfrak m}\in G(\mathbf A_1,R_{\mathfrak m})$
forces $h_{\mathfrak m}=d_{\mathfrak m}=0$ and $a_{\mathfrak m}=1$.

Fix $\mathfrak m$ and write $R_{\mathfrak m}$ as a local ring.
Using the Gauss decomposition $G=TUVU$ for $\mathbf A_1$, we may write
\[
B_{\mathfrak m}=
\begin{pmatrix}
t& 0& 0\\
0& t^{-1}& 0\\
0& 0& 1
\end{pmatrix}
x_\alpha(\alpha)\,x_{-\alpha}(\beta)\,x_\alpha(\gamma)
\]
with $t\in R_{\mathfrak m}^*$ and $\alpha,\beta,\gamma\in R_{\mathfrak m}$.
Multiplying these factors explicitly gives (compare the $(3,1)$ and $(2,3)$ entries)
\[
\beta(\alpha\beta+1)=0,\qquad \beta(\beta\gamma+1)=0.
\]
If $\beta\in R_{\mathfrak m}^*$, then $\alpha\beta+1=\beta\gamma+1=0$, and the resulting matrix has a zero
on the diagonal positions $(2,2)$ and $(1,1)$, which is incompatible with the block form of $B_{\mathfrak m}$.
Hence $\beta$ is not a unit, i.e. $\beta\in\mathfrak mR_{\mathfrak m}$.
But then $\alpha\beta\in\mathfrak mR_{\mathfrak m}$, so $\alpha\beta+1\in 1+\mathfrak mR_{\mathfrak m}\subseteq R_{\mathfrak m}^*$,
and similarly $\beta\gamma+1\in R_{\mathfrak m}^*$.
Therefore the above equalities force $\beta=0$.

With $\beta=0$ we have $x_{-\alpha}(\beta)=1$, hence $B_{\mathfrak m}\in TU$, and in particular the $(2,1)$-entry must vanish.
Thus $ad=0$ in $R_{\mathfrak m}$, and since $a\in R^*$ in the group, we get $d=0$.
Then also $B_{\mathfrak m}$ becomes upper triangular, forcing $h=0$, and finally comparing the $(3,3)$-entry gives $a=1$.
\end{proof}

By Lemma~\ref{lem:impossible}, the only possibility in \eqref{eq:phi-x-1-final} is $h=0$, hence
\[
\varphi(x_{-\alpha}(1))=x_{-\alpha}(1).
\]

\subsection*{\texorpdfstring{Step 7: Forcing the diagonal parameter $\alpha=1$}{Step 7: Forcing the diagonal parameter alpha=1}}
From \eqref{eq:phi-xa} and \eqref{eq:beta-gamma} we know that
\[
\varphi(x_\alpha(t))=
\begin{pmatrix}
\alpha& t^2& 2t\\
0& \alpha& 0\\
0& t& \alpha
\end{pmatrix}\in E(\mathbf A_1)\qquad(t\in R),
\]
with $3\alpha=3$. We claim that necessarily $\alpha=1$.
Indeed, consider the general matrix
\[
X=\begin{pmatrix}
a& b^2& 2b\\
0& a& 0\\
0& b& a
\end{pmatrix}\in G(\mathbf A_1,R).
\]
Localizing at $R_{\mathfrak m}$ and repeating the Gauss-decomposition argument as in Lemma~\ref{lem:impossible},
comparison of the $(2,1)$-entry forces the negative-root parameter $\beta=0$, so $X\in TU$,
hence its $(2,2)$- and $(3,3)$-entries must be $1$. Therefore $a=1$.
Applying this to $X=\varphi(x_\alpha(t))$ yields $\alpha=1$.

\subsection*{\texorpdfstring{Finalization of the $\mathbf A_1$ case}{Finalization of the A1 case}}
We have shown that for all $t\in R$,
\[
\varphi(x_\alpha(t))=x_\alpha(t),\qquad \varphi(x_{-\alpha}(t))=x_{-\alpha}(t).
\]
Since $E(\mathbf A_1)$ is generated by these elements, the normalized endomorphism is the identity:
\[
\varphi=\mathrm{id}\quad\text{on }E(\mathbf A_1).
\]
Undoing the normalization \eqref{eq:norm} shows that the \emph{original} locally inner endomorphism is inner.

\begin{thm}\label{thm:A1}
Let $R$ be a commutative ring with $1/2\in R$. Then any locally inner endomorphism of
$E_{\mathrm{ad}}(\mathbf A_1,R)$ is inner. Equivalently, $E_{\mathrm{ad}}(\mathbf A_1,R)$ has Property~E, hence is $\Sha$-rigid.
\end{thm}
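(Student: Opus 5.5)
The proof of Theorem~\ref{thm:A1} assembles Steps~1--7 above. Let $\psi$ be an arbitrary locally inner endomorphism of $E(\mathbf A_1)$. Since $\psi(x_\alpha(1))$ is conjugate to $x_\alpha(1)$ inside $E(\mathbf A_1)$, there is $y\in E(\mathbf A_1)$ with $i_y\circ\psi(x_\alpha(1))=x_\alpha(1)$. The composite $\varphi=i_y\circ\psi$ is again locally inner, because $i_y$ preserves conjugacy classes. So we may work with $\varphi$ normalized as in \eqref{eq:norm}.

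Next apply Remark~\ref{rem:trace} and the trace formula \eqref{eq:traceA1}. This gives \eqref{eq:d-g}, and combined with Lemma~\ref{lem:cent} it gives the shape \eqref{eq:phi-xa} of $\varphi(x_\alpha(t))$ together with \eqref{eq:beta-gamma}. We then conjugate further by $x_\alpha(\mu)$ with $\mu\in R$. This conjugation leaves every $\varphi(x_\alpha(t))$ unchanged and achieves \eqref{eq:phi-x-1}. Since $x_\alpha(\mu)\in E(\mathbf A_1)$, the composite is still inner-equivalent to $\psi$.

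The relation \eqref{eq:quad} pins down $\varphi(x_{-\alpha}(1))$ up to the single parameter $h$, as in \eqref{eq:phi-x-1-final}. Step~6 uses local innerness once more, and Lemma~\ref{lem:impossible} then forces $h=0$. The key point in that lemma is localization via Proposition~\ref{inlocal} together with the Gauss decomposition \eqref{eq:Gauss} over each $R_{\mathfrak m}$. An element of $R$ that vanishes in every $R_{\mathfrak m}$ is zero, so the local conclusions $h_{\mathfrak m}=0$ assemble to $h=0$. In the same way, Step~7 gives $\alpha=1$, so $\varphi(x_\alpha(t))=x_\alpha(t)$ for all $t$.

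It remains to pass from $\varphi(x_{-\alpha}(1))=x_{-\alpha}(1)$ to $\varphi(x_{-\alpha}(s))=x_{-\alpha}(s)$ for all $s\in R$. One option is to rerun Steps~1--6 with the role of $\pm\alpha$ exchanged, using Step~3 in its mirrored form. The cleaner route uses the Weyl element $w=w_\alpha(1)=x_\alpha(1)x_{-\alpha}(-1)x_\alpha(1)$. Since $\varphi$ fixes $x_\alpha(\pm1)$ and $x_{-\alpha}(\pm1)$, it fixes $w$. Moreover $w x_\alpha(t) w^{-1}=x_{-\alpha}(\pm t)$ in the adjoint form, so $\varphi(x_{-\alpha}(\pm t))=w\,\varphi(x_\alpha(t))\,w^{-1}=x_{-\alpha}(\pm t)$.

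Thus $\varphi$ fixes all generators of $E(\mathbf A_1)$, so $\varphi=\mathrm{id}$ and $\psi=i_{y}^{-1}\circ i_{x_\alpha(\mu)}^{-1}$ is inner. This is Property~E, and Property~E is equivalent to triviality of $\Sha$ by the Mazur reformulation recalled in the introduction.

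The main obstacle is the step that uses Lemma~\ref{lem:impossible}. Conjugacy over $\GL_3(R)$ by the matrix $P$ has to be converted into a statement inside $G(\mathbf A_1)$, and the residual centralizer factor must be controlled locally at every maximal ideal. This is exactly where localization and the Gauss decomposition are indispensable. I would verify carefully there that the centralizer parametrization, which needs $a\in R^*$ and $1/2\in R$, is valid globally before localizing.
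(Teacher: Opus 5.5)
Your proposal is correct and is essentially the paper's own proof: the normalization, Steps~1--7 and Lemma~\ref{lem:impossible}. Your conjugation by $w_\alpha(1)$ is a sound and worthwhile addition. Indeed $w_\alpha(1)x_\alpha(t)w_\alpha(1)^{-1}=x_{-\alpha}(-t)$ in this realization, and Step~6 only gives $\varphi(x_{-\alpha}(1))=x_{-\alpha}(1)$. The paper's Finalization asserts $\varphi(x_{-\alpha}(t))=x_{-\alpha}(t)$ for all $t$ without argument, so your step fills that gap.

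On the Step~6 point you flag: $a\in R^\times$ because $C=AP^{-1}$ is invertible with $\det C=a^3$. Since $I+dE_{21}$ commutes with every $x_{-\alpha}(u)$, one gets $x_{-\alpha}(-c/a)A=a(I+dE_{21})P$; the paper writes these two factors in the opposite order. This matrix is block-diagonal with unit $(1,1)$-entry, so the lemma's localization argument applies unchanged and still forces $h=0$.
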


\begin{rem}
For the full adjoint Chevalley group $G_{\mathrm{ad}}(\mathbf A_1,R)$ the same conclusion follows whenever
$E_{\mathrm{ad}}(\mathbf A_1,R)\trianglelefteq G_{\mathrm{ad}}(\mathbf A_1,R)$.
\end{rem}

\begin{proof}
Let $\varphi:G_{\mathrm{ad}}\to G_{\mathrm{ad}}$ be locally inner. Restricting to $E_{\mathrm{ad}}$ and using Theorem~\ref{thm:A1},
we may assume $\varphi|_{E_{\mathrm{ad}}}=\mathrm{id}$.
For $g\in G_{\mathrm{ad}}$ and $x\in E_{\mathrm{ad}}$ we have $gxg^{-1}\in E_{\mathrm{ad}}$, hence
\[
gxg^{-1}= \varphi(gxg^{-1})= \varphi(g)x\varphi(g)^{-1}.
\]
Thus $g^{-1}\varphi(g)$ centralizes $E_{\mathrm{ad}}$. Since the centralizer of $E_{\mathrm{ad}}$ in $G_{\mathrm{ad}}$ is trivial,
we get $g^{-1}\varphi(g)=1$, hence $\varphi(g)=g$ for all $g$, i.e. $\varphi=\Id$.
\end{proof}

\medskip
\textit{Theorem~\ref{thm:main} is proved for the case $\mathbf A_1$.}

\section{\texorpdfstring{$\Sha$-Rigidity of the adjoint Chevalley groups of type~$\mathbf A_2$}{Sha-Rigidity of the adjoint Chevalley groups of type A2}}\label{sec:A2}

\subsection{\texorpdfstring{Basic notation and standard relations in type $\mathbf A_2$}{Basic notation and standard relations in type A2}}

Let $\Phi=\mathbf A_2$ with simple roots $\alpha_1,\alpha_2$ and $\gamma=\alpha_1+\alpha_2$.
We use the standard realization of $G_{\mathrm{ad}}(\mathbf A_2,R)\cong \mathrm{PGL}_3(R)$ coming from $\SL_3$:
\[
x_{\alpha_1}(t)=I+tE_{12},\quad
x_{\alpha_2}(t)=I+tE_{23},\quad
x_{\gamma}(t)=I+tE_{13},
\]
\[
x_{-\alpha_1}(t)=I+tE_{21},\quad
x_{-\alpha_2}(t)=I+tE_{32},\quad
x_{-\gamma}(t)=I+tE_{31}.
\]
For $u\in R^\times$ set
\[
h_{\alpha_1}(u)=\diag(u,u^{-1},1),\qquad
h_{\alpha_2}(u)=\diag(1,u,u^{-1}),
\]
and define $w_{\alpha_i}(u)=x_{\alpha_i}(u)x_{-\alpha_i}(-u^{-1})x_{\alpha_i}(u)$.

We will use the following relations (all other commutators between root subgroups in type $\mathbf A_2$
are trivial since $r+s\notin\Phi$):
\begin{align*}
x_r(t)\,x_r(s)&=x_r(t+s)\qquad (r\in\Phi),\\
[x_{\alpha_1}(t),x_{\alpha_2}(s)]&=x_{\gamma}(ts),\\
x_{\alpha_2}(v)\,x_{\alpha_1}(u)&=
x_{\alpha_1}(u)\,x_{\alpha_2}(v)\,x_{\gamma}(-uv),\\
[x_{\alpha_1}(t),x_{-\gamma}(s)]&=x_{-\alpha_2}(-ts),\qquad
[x_{\alpha_2}(t),x_{-\gamma}(s)]=x_{-\alpha_1}(ts),\\
[x_{\gamma}(t),x_{-\alpha_1}(s)]&=x_{\alpha_2}(-ts),\qquad
[x_{\gamma}(t),x_{-\alpha_2}(s)]=x_{\alpha_1}(ts).
\end{align*}
Moreover $w_{\alpha_i}(u)^2=h_{\alpha_i}(-1)$ for $i=1,2$.

\subsection{\texorpdfstring{The special element $X_0$ and its centralizer}{The special element X0 and its centralizer}}\label{subsec:X0}

Let $X_0=x_{\alpha_1}(1)\,x_{\alpha_2}(1)$ and assume that $\varphi(X_0)=X_0$ for our initial $\varphi$.

We determine the centralizer of $X_0$ in $G_{\mathrm{ad}}(\mathbf A_2,R)$ (equivalently, in $E_{\mathrm{ad}}(\mathbf A_2,R)$).
If $g$ commutes with $X_0$, then for every maximal ideal $J\subset R$ its localization $g_J$ commutes with $(X_0)_J$.
Hence it suffices to find the centralizer in a local ring $R_J$; reducing further modulo $J$, the image $\overline g$ commutes with
$\overline X_0$ over the residue field~$k_J$.

\medskip
\noindent\textbf{Field step.}
Over a field we may write
\[
\overline g \;=\; t_1(a_1)t_2(a_2)\;x_{\alpha_1}(b_1)x_{\alpha_2}(b_2)x_{\alpha_1+\alpha_2}(b_3)\;\mathbf w\;
x_{\alpha_1}(c_1)x_{\alpha_2}(c_2)x_{\alpha_1+\alpha_2}(c_3),
\]
and require
\[
\overline g\,x_{\alpha_1}(1)x_{\alpha_2}(1)\,\overline g^{-1}\;=\;x_{\alpha_1}(1)x_{\alpha_2}(1).
\]
Then
\begin{multline*}
x_{\alpha_1}(1)x_{\alpha_2}(1)
= \\
=g'\,x_{\alpha_1}(c_1)x_{\alpha_2}(c_2)x_{\alpha_1+\alpha_2}(c_3)\,x_{\alpha_1}(1)x_{\alpha_2}(1)\,
\bigl(x_{\alpha_1}(c_1)x_{\alpha_2}(c_2)x_{\alpha_1+\alpha_2}(c_3)\bigr)^{-1}\,(g')^{-1}\\
= g'\,x_{\alpha_1}(1)x_{\alpha_2}(1)\,x_{\alpha_1+\alpha_2}(d)\,(g')^{-1}
= g''\,\mathbf w\,x_{\alpha_1}(1)x_{\alpha_2}(1)\,x_{\alpha_1+\alpha_2}(d)\,\mathbf w^{-1}\,(g'')^{-1}\\
= g''\,x_{w(\alpha_1)}(1)x_{w(\alpha_2)}(1)\,x_{w(\alpha_1+\alpha_2)}(d)\,(g'')^{-1}.
\end{multline*}
The left-hand side contains only positive root factors, so necessarily $\mathbf w=e$ (otherwise a negative root would appear).
Next,
\begin{multline*}
x_{w(\alpha_1)}(1)x_{w(\alpha_2)}(1)x_{w(\alpha_1+\alpha_2)}(d)
=\\
=g'''\,t_1(a_1)^{-1}t_2(a_2)^{-1}\,x_{\alpha_1}(1)x_{\alpha_2}(1)\,t_1(a_1)t_2(a_2)\,(g''')^{-1}\\
= g'''\,x_{\alpha_1}(a_1^{-1})x_{\alpha_2}(a_2^{-1})\,(g''')^{-1}\\
= x_{\alpha_1}(-b_1)x_{\alpha_2}(-b_2)x_{\alpha_1+\alpha_2}(-b_3)\,x_{\alpha_1}(a_1^{-1})x_{\alpha_2}(a_2^{-1})
x_{\alpha_1}(b_1)x_{\alpha_2}(b_2)x_{\alpha_1+\alpha_2}(b_3)\\
= x_{\alpha_1}(a_1^{-1})x_{\alpha_2}(a_2^{-1})\,x_{\alpha_1+\alpha_2}(d').
\end{multline*}
Thus $a_1=a_2=1$, hence $t_1=t_2=1$, and
\[
\overline g \;=\; x_{\alpha_1}(b_1)\,x_{\alpha_2}(b_2)\,x_{\alpha_1+\alpha_2}(b_3).
\]
Finally,
$$
x_{\alpha_1}(b_1)x_{\alpha_2}(b_2)\,x_{\alpha_1}(1)x_{\alpha_2}(1)\,x_{\alpha_1}(-b_1)x_{\alpha_2}(-b_2)
= x_{\alpha_1}(1)x_{\alpha_2}(1)\,x_{\alpha_1+\alpha_2}(b_1-b_2),
$$
so $b_1=b_2$, while $b_3$ is arbitrary.

\medskip
\noindent\textbf{Local step.}
Let $R$ be local with maximal ideal $J$ and residue field $k=R/J$.
If $g$ commutes with $X_0$ in $G_{\mathrm{ad}}(\mathbf A_2,R)$, then $\overline g$ commutes with $\overline X_0$ in $G_{\mathrm{ad}}(\mathbf A_2,k)$ and hence
\[
\overline g=\overline{x_{\alpha_1}(b)}\,\overline{x_{\alpha_2}(b)}\,\overline{x_{\alpha_1+\alpha_2}(b_3)}.
\]
Therefore $g$ admits the short Gauss factorization
\[
g=x_{-\alpha_1}(u_1)\,x_{-\alpha_2}(u_2)\,x_{-\alpha_1-\alpha_2}(u_3)\,t_1(r_1)t_2(r_2)\,
x_{\alpha_1}(s_1)\,x_{\alpha_2}(s_2)\,x_{\alpha_1+\alpha_2}(s_3),
\]
with $u_i\in J$, $r_i\equiv 1\pmod J$, and $s_1\equiv s_2\pmod J$.
From $gX_0g^{-1}=X_0$ we compute
\begin{multline*}
x_{\alpha_1}(1)x_{\alpha_2}(1)
=\\
=g'\,x_{\alpha_1}(s_1)x_{\alpha_2}(s_2)x_{\alpha_1+\alpha_2}(s_3)\,x_{\alpha_1}(1)x_{\alpha_2}(1)\,
\bigl(x_{\alpha_1}(s_1)x_{\alpha_2}(s_2)x_{\alpha_1+\alpha_2}(s_3)\bigr)^{-1}\,(g')^{-1}=\\
= g'\,x_{\alpha_1}(1)x_{\alpha_2}(1)\,x_{\alpha_1+\alpha_2}(d)\,(g')^{-1}
=\\
=g''\,t_1(r_1)t_2(r_2)\,x_{\alpha_1}(1)x_{\alpha_2}(1)\,x_{\alpha_1+\alpha_2}(d)\,t_1(r_1^{-1})t_2(r_2^{-1})\,(g'')^{-1}\\
= g''\,x_{\alpha_1}(r_1)\,x_{\alpha_2}(r_2)\,x_{\alpha_1+\alpha_2}(d')\,(g'')^{-1}\\
= x_{-\alpha_1}(u_1)x_{-\alpha_2}(u_2)x_{-\alpha_1-\alpha_2}(u_3)\,x_{\alpha_1}(r_1)x_{\alpha_2}(r_2)x_{\alpha_1+\alpha_2}(d')\times\\
\times
x_{-\alpha_1}(-u_1)x_{-\alpha_2}(-u_2)x_{-\alpha_1-\alpha_2}(-u_3).
\end{multline*}
Since $r_1,r_2\in R^\times$, the right-hand side would contain negative root factors whenever $u_3\neq 0$, while the left-hand side does not; hence $u_3=0$.
Moreover, commuting $x_{-\alpha_i}(u_i)$ past $x_{\alpha_i}(r_i)$ (rank-one relation) produces a nontrivial torus factor $h_{\alpha_i}(\cdots)$
whenever $u_i\neq 0$, which cannot be absorbed because $r_i$ are units and no torus appears on the left;
therefore $u_1=u_2=0$.
Thus
\[
g=t_1(r_1)t_2(r_2)\,x_{\alpha_1}(s_1)x_{\alpha_2}(s_2)x_{\alpha_1+\alpha_2}(s_3).
\]
Finally, $gX_0g^{-1}=X_0$ forces $t_1=t_2=1$ (a nontrivial torus rescales $x_{\alpha_i}(1)$), and the field-step computation above gives $s_1=s_2$.
Hence for some $a,b\in R$,
\[
g=x_{\alpha_1}(a)\,x_{\alpha_2}(a)\,x_{\alpha_1+\alpha_2}(b).
\]

\medskip
\noindent\textbf{Global step.}
For general $R$, $g$ centralizes $X_0$ if and only if each localization $g_J$ centralizes $(X_0)_J$ for all maximal $J$.
By the local description,
$$
g_J=x_{\alpha_1}(a_J)x_{\alpha_2}(a_J)x_{\alpha_1+\alpha_2}(b_J).
$$
By patching, there exist $a,b\in R$ with these local images. The converse inclusion is immediate from the displayed computations.

\begin{prop}\label{prop:CentX0}
\[
C_{G_{\mathrm{ad}}(\mathbf A_2,R)}(X_0)
=\Bigl\{\,x_{\alpha_1}(a)\,x_{\alpha_2}(a)\,x_{\alpha_1+\alpha_2}(b)\ \Bigm|\ a,b\in R\,\Bigr\}.
\]
\end{prop}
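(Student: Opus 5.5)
We follow the three-stage pattern already set up in the text: field, then local ring, then arbitrary commutative ring. The inclusion ``$\supseteq$'' we check directly. In the $\SL_3$-realization, $X_0=I+N$ with $N=E_{12}+E_{23}+E_{13}$. Also
\[
x_{\alpha_1}(a)x_{\alpha_2}(a)x_{\alpha_1+\alpha_2}(b)=I+aN+b'E_{13}, \qquad b'=b+a^2,
\]
where the $a^2$ comes from $E_{12}E_{23}=E_{13}$. Since $E_{13}=N^2$, this matrix is a polynomial in $N$, so it commutes with $X_0$ already in $\GL_3(R)$. For the reverse inclusion we fix $g\in G_{\mathrm{ad}}(\mathbf A_2,R)$ with $gX_0g^{-1}=X_0$ and localize at a maximal ideal $J$, which is legitimate because of Proposition~\ref{inlocal}.

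\textbf{Field and local steps.} Over the residue field $k_J$ we use the Bruhat computation above. The only point to tighten is the claim that $\mathbf w\neq e$ creates negative root factors. For this we note that conjugating by $\mathbf w$ sends the regular unipotent $X_0$ to an element whose $U$-part must again be regular, which forces $\mathbf w=e$. The field step then gives $\overline g=x_{\alpha_1}(\bar b)x_{\alpha_2}(\bar b)x_{\alpha_1+\alpha_2}(\bar b_3)$. Over $R_J$ we use that $\mathrm{Pic}(R_J)=0$, so $g$ lifts to some $\tilde g\in\GL_3(R_J)$. The relation $gX_0g^{-1}=X_0$ in $\PGL_3$ becomes
\[
\tilde g X_0=\lambda X_0\tilde g \qquad\text{for some }\lambda\in R_J^\times .
\]
By the field step we may take $\tilde g\equiv I+\bar bN+\bar b_3'E_{13}\pmod J$ and $\lambda\equiv1\pmod J$. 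So $\tilde g$ is congruent to an upper unitriangular matrix, and its subdiagonal entries $g_{21},g_{31},g_{32}$ lie in $J$.

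\textbf{Main obstacle: removing the scalar $\lambda$ and the lower-triangular part.} Writing $\varepsilon=\lambda-1\in J$, the defining relation reads
\[
\tilde gN-\lambda N\tilde g=\varepsilon\,\tilde g .
\]
We will compare entries in the order $(3,1),(2,1),(3,2),(1,1),(2,2),(3,3)$. The aim is to show that $g_{31},g_{21},g_{32}$ vanish, so that $\tilde g$ is upper triangular. Once $\tilde g$ is upper triangular, the $(1,1)$-entry of the relation gives $g_{11}=\lambda g_{11}$, so $\lambda=1$ because $g_{11}$ is a unit. The $(1,2)$- and $(2,3)$-entries then give $g_{11}=g_{22}=g_{33}$ and $g_{12}=g_{23}$. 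Rescaling $\tilde g$ by $g_{11}^{-1}$, we obtain
\[
g=I+aN+b'E_{13}=x_{\alpha_1}(a)x_{\alpha_2}(a)x_{\alpha_1+\alpha_2}(b)
\]
in $\PGL_3(R_J)$. We expect the delicate point to be proving $g_{31}=g_{21}=g_{32}=0$ when $\varepsilon$ is a nonzero nilpotent; the entry equations alone only give relations such as $\varepsilon g_{31}=0$. The first thing to try is to work in the faithful adjoint representation $\Ad$ of Section~\ref{sec:Chev}, where no scalar ambiguity arises: $\Ad(g)$ commutes with $\Ad(X_0)$ on $\mathfrak{sl}_3(R_J)$. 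It also preserves the filtration of $\mathfrak{sl}_3(R_J)$ given by the kernels of the powers of $\Ad(X_0)-1$, which forces $\Ad(g)$ to stabilize the Borel subalgebra; this makes $\tilde g$ upper triangular, after which the argument above applies. As a fallback we will use the rank-one factorization \eqref{eq:rankone}, as in the local step of the text. There a nonzero $u_i\in J$ produces a torus factor $h_{\alpha_i}(1+u_is_i)$ that cannot be cancelled, and comparing Gauss coordinates uniquely forces $u_1=u_2=u_3=0$.

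\textbf{Global step.} For each maximal ideal $J$ we now have elements $a_J,b_J\in R_J$. These are uniquely determined, since $a_J$ is the $(1,2)$-entry and $b_J$ is recovered from the $(1,3)$-entry of a canonical unipotent representative. So they are compatible on overlaps $R_{J}\to R_{\mathfrak p}$. We conclude by taking the entries of $g$ in a fixed lift over $R$, normalized so that the diagonal entries are equal; this normalization is possible because they agree locally. Then $a$ and $b$ are the corresponding entries over $R$, and they localize to $a_J$ and $b_J$ because of the injectivity in Proposition~\ref{inlocal}.
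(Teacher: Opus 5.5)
\textbf{The gap and a counterexample.} The gap is exactly the step you flagged as the main obstacle, and it cannot be closed. Under the paper's standing hypothesis $2\in R^\times$ alone, the proposition is false. Take $R=\mathbb F_3[\epsilon]/(\epsilon^2)$, where $2=-1$ is a unit, and
\[
g=\begin{pmatrix}1-\epsilon&0&0\\-\epsilon&1&0\\0&\epsilon&1+\epsilon\end{pmatrix}=I+\epsilon M\in\SL_3(R).
\]
A direct check over $\mathbb F_3$ gives $[M,N]=I+N=X_0$; the $(1,1)$-entry needs $3=0$. Hence $gX_0g^{-1}=X_0+\epsilon[M,X_0]=(1+\epsilon)X_0$.

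So $g$ commutes with $X_0$ up to the central element $(1+\epsilon)I$ of $\SL_3(R)$ (note $(1+\epsilon)^3=1$). Its image therefore lies in $C_{G_{\mathrm{ad}}(\mathbf A_2,R)}(X_0)$, and even in $E_{\mathrm{ad}}(\mathbf A_2,R)$. But $g_{21}=-\epsilon\neq0$, so $g$ is not a scalar multiple of an upper unitriangular matrix. The same happens over $\mathbb Z/9\mathbb Z$: $g=\left(\begin{smallmatrix}7&0&0\\6&1&0\\0&3&4\end{smallmatrix}\right)$ satisfies $gX_0=4X_0g$. This is precisely your situation, with $\varepsilon=\lambda-1$ a nonzero nilpotent and $g_{21},g_{32}\neq0$. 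No argument can force these entries to vanish.

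\textbf{Why neither remedy works.} In this example $\Ad(g)$ commutes with $\Ad(X_0)$, so it preserves every $\ker(\Ad(X_0)-1)^k$. Yet $\Ad(g)(E_{11}-E_{22})$ has $(2,1)$-entry $\epsilon$, so $\Ad(g)$ does not stabilize $\mathfrak b$. Even over $\mathbb C$ those kernels have dimensions $2,4,6,7,8$, so $\mathfrak b$ is not one of them.

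The torus-factor fallback is the paper's own local step, and it fails for the same reason. Here $g=x_{-\alpha_1}(-\epsilon)\,x_{-\alpha_2}(\epsilon)\,h_{\alpha_1}(1-\epsilon)h_{\alpha_2}(1-\epsilon)$ has $u_1,u_2\neq0$ and a torus part that is nontrivial in $\PGL_3(R)$. The torus factors that are supposed to be unabsorbable assemble into the scalar $\lambda I$, which is trivial in $G_{\mathrm{ad}}$. So the paper's proof has the same hole.

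\textbf{What your set-up does prove.} Your lift to $\GL_3(R_J)$ gives a short proof under an extra hypothesis, such as $3\in R^\times$ or $R$ reduced. Comparing traces and determinants in $\tilde gX_0\tilde g^{-1}=\lambda X_0$ gives $3(\lambda-1)=0$ and $\lambda^3=1$. Hence $(\lambda-1)^3=3\lambda(1-\lambda)=0$, so $\lambda=1$ in either case. Then $\tilde g$ commutes with $N$. Since $e_3$, $Ne_3=e_1+e_2$, $N^2e_3=e_1$ form an $R_J$-basis, $N$ is cyclic and its commutant in $\mathrm{M}_3(R_J)$ is $R_J[N]$. So $\tilde g=c_0I+c_1N+c_2N^2$ with $c_0$ a unit (because $\det\tilde g=c_0^3$). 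Dividing by $c_0$ gives the claim, with no triangularity argument at all.

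\textbf{Two smaller slips.} First, $x_{\alpha_1}(a)x_{\alpha_2}(a)x_{\alpha_1+\alpha_2}(b)=I+aN+(b+a^2-a)E_{13}$, since $N$ already contains $E_{13}$. Second, when $\mathrm{Pic}(R)\neq0$ an element of $\PGL_3(R)$ need not lift to $\GL_3(R)$. In the global step you should therefore glue the uniquely determined local parameters rather than normalize a global lift.
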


\subsection{\texorpdfstring{Image of $x_{\alpha_1+\alpha_2}(1)$}{Image of xalpha1+alpha2(1)}}\label{subsec:image-a1+a2}

Let $X_{12}:=\varphi\bigl(x_{\alpha_1+\alpha_2}(1)\bigr)$.
Since $\varphi$ is locally inner, $X_{12}$ is conjugate in the Chevalley group to $x_{\alpha_1+\alpha_2}(1)$.
Moreover, by Proposition~\ref{prop:CentX0}, $X_{12}$ commutes with $X_0$, hence
\[
X_{12}=x_{\alpha_1}(a)\,x_{\alpha_2}(a)\,x_{\alpha_1+\alpha_2}(b)\qquad\text{for some }a,b\in R,
\]
and for every maximal ideal $J\subset R$ its localization $(X_{12})_J$ is conjugate to $\bigl(x_{\alpha_1+\alpha_2}(1)\bigr)_J$ in $G_{\mathrm{ad}}(\mathbf A_2,R_J)$.

\medskip
\noindent\textbf{Field step.}
Fix $J$ and pass to the residue field $k_J=R_J/\Rad R_J$.
Since $x_{\alpha_1+\alpha_2}(1)$ commutes with all positive root unipotents, we may assume the conjugating element over $k_J$ has the form
\[
g=t_1(a_1)\,t_2(a_2)\,x_{\alpha_1}(b_1)\,x_{\alpha_2}(b_2)\,x_{\alpha_1+\alpha_2}(b_3)\,\mathbf w,
\]
so that
\begin{multline*}
(x_{\alpha_1}(b_1)x_{\alpha_2}(b_2)x_{\alpha_1+\alpha_2}(b_3))^{-1}\,
x_{\alpha_1}(a_1^{-1}a)\,x_{\alpha_2}(a_2^{-1}a)\,x_{\alpha_1+\alpha_2}(a_1^{-1}a_2^{-1}b)\times \\
\times x_{\alpha_1}(b_1)x_{\alpha_2}(b_2)x_{\alpha_1+\alpha_2}(b_3)
= x_{w(\alpha_1+\alpha_2)}(1).
\end{multline*}
Pushing $x_{\alpha_1}(b_1)$ and $x_{\alpha_2}(b_2)$ through gives
\[
x_{\alpha_1}(a_1^{-1}a)\,x_{\alpha_2}(a_2^{-1}a)\,
x_{\alpha_1+\alpha_2}\bigl(-b_2a_1^{-1}a-b_1a_2^{-1}a+a_1^{-1}a_2^{-1}b\bigr)
= x_{w(\alpha_1+\alpha_2)}(1).
\]
If $a\neq 0$ in $k_J$, then the left-hand side contains nontrivial $x_{\alpha_1}$ and $x_{\alpha_2}$ factors, which is impossible for a single root element on the right.
Hence $a\equiv 0\pmod{J}$ for every~$J$, i.e. $a\in \Rad(R)$.
Then we see $w(\alpha_1+\alpha_2)=\alpha_1+\alpha_2$, which necessarily implies $\mathbf w=e$.

\medskip
\noindent\textbf{Local step.}
Fix a maximal ideal $J$ and work over the local ring $R_J$.
As above, we may take the conjugating element in the form
\[
g=x_{-\alpha_1}(c_1)\,x_{-\alpha_2}(c_2)\,x_{-\alpha_1-\alpha_2}(c_3)\,t_1(r_1)\,t_2(r_2),
\qquad c_i\in \Rad(R_J),\ r_i\in R_J^\times.
\]
Conjugating $x_{\alpha_1+\alpha_2}(r_1r_2)$ by $x_{-\alpha_1-\alpha_2}(c_3)$ produces the torus factor
$h_{\alpha_1+\alpha_2}(1+c_3)$; since
\[
x_{\alpha_1}(a)\,x_{\alpha_2}(a)\,x_{\alpha_1+\alpha_2}(b)
\]
has no torus part, we must have $c_3=0$.
Hence
\begin{multline*}
x_{\alpha_1}(a)x_{\alpha_2}(a)x_{\alpha_1+\alpha_2}(b)
=x_{-\alpha_1}(c_1)\,x_{-\alpha_2}(c_2)\,x_{\alpha_1+\alpha_2}(r_1r_2)\,x_{-\alpha_2}(-c_2)\,x_{-\alpha_1}(-c_1)=\\
=x_{-\alpha_1}(c_1)\,x_{\alpha_1+\alpha_2}(r_1r_2)\,x_{\alpha_1}(\pm c_2 r_1r_2)\,x_{-\alpha_1}(-c_1)=\\
=x_{\alpha_1+\alpha_2}(r_1r_2)\,x_{\alpha_2}(\pm c_1 r_1r_2)\,h_{\alpha_1}(1\pm c_1c_2r_1r_2)\,x_{\alpha_1}(\pm c_2 r_1r_2)\,x_{-\alpha_1}(\dots).
\end{multline*}
Since the left-hand side lies in the positive unipotent group $U$ (no torus, no negative root factors), we must have
\[
h_{\alpha_1}(1\pm c_1c_2r_1r_2)=1\quad\Longrightarrow\quad c_1c_2=0,
\]
and the negative factor $x_{-\alpha_1}(\dots)$ must vanish in the product, which forces
\[
c_1=\pm c_2,\qquad c_1^2=0.
\]
Comparing the remaining positive factors with $x_{\alpha_1}(a)\,x_{\alpha_2}(a)\,x_{\alpha_1+\alpha_2}(b)$ yields
\[
b=r_1r_2\in R_J^\times,\qquad a=\pm c_2\,r_1r_2,
\]
and therefore $a^2=(c_2^2)\,(r_1r_2)^2=0$ in $R_J$.

\medskip
\noindent\textbf{Global step.}
We have shown for every localization $R_J$ that $b$ maps to a unit and $a^2$ maps to $0$.
Hence $b\in R^\times$ and $a^2=0$ in $R$.
Summarizing:

\begin{prop}\label{prop:image-a1+a2}
Let $X_{12}=\varphi\bigl(x_{\alpha_1+\alpha_2}(1)\bigr)$.
Then
\[
X_{12}=x_{\alpha_1}(a)\,x_{\alpha_2}(a)\,x_{\alpha_1+\alpha_2}(b)
\quad\text{with}\quad a^2=0\ \ \text{and}\ \ b\in R^\times.
\]
\end{prop}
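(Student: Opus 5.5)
The plan is to combine Proposition~\ref{prop:CentX0} with invariants of conjugacy in $\PGL_3$, working locally. Since $x_{\alpha_1+\alpha_2}(1)$ commutes with $X_0=x_{\alpha_1}(1)x_{\alpha_2}(1)$ and $\varphi(X_0)=X_0$, the image $X_{12}$ commutes with $X_0$. Proposition~\ref{prop:CentX0} then gives
\[
X_{12}=x_{\alpha_1}(a)\,x_{\alpha_2}(a)\,x_{\alpha_1+\alpha_2}(b)\qquad (a,b\in R).
\]
It remains to show $a^2=0$ and $b\in R^\times$. Both properties can be checked after localizing at every maximal ideal $J$: an element is a unit if it is a unit in every $R_J$, and it is zero if it is zero in every $R_J$ (Proposition~\ref{inlocal}). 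So I will fix $J$ and work over the local ring $R_J$, where $(X_{12})_J$ is still conjugate to $x_{\alpha_1+\alpha_2}(1)$.

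Rather than tracking Gauss decompositions, I would use a conjugation-invariant algebraic condition. In the $\SL_3$ realization, $x_{\alpha_1+\alpha_2}(1)=I+E_{13}$. The matrix $N=x_{\alpha_1+\alpha_2}(1)-I$ satisfies $N^2=0$ and has every $2\times2$ minor equal to $0$, i.e.\ rank at most one. An element of $\PGL_3$ is only defined up to a scalar $\lambda\in R^\times$, but conjugacy still forces $\lambda^3=1$ by taking determinants. Comparing traces ($3\lambda=3$, with $3$ not necessarily invertible) is less clean, so I would instead use the condition $(\lambda^{-1}Y-I)^2=0$ for a representative $Y$.

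Next I compute $Y-I$ for the representative $Y=x_{\alpha_1}(a)x_{\alpha_2}(a)x_{\alpha_1+\alpha_2}(b)$. One gets $Y-I=aE_{12}+aE_{23}+(b+a^2)E_{13}$, and $(Y-I)^2=a^2E_{13}$. If the scalar $\lambda$ is $1$, this immediately gives $a^2=0$. To show $\lambda=1$, I note that $\lambda^{-1}Y-I$ has diagonal entries $\lambda^{-1}-1$; its square being $0$ forces $(\lambda^{-1}-1)^2=0$ on the diagonal. The off-diagonal relations then pin $\lambda$ down, and the residue-field argument in the field step gives $\lambda\equiv1\pmod J$. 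If handling $\lambda$ this way becomes awkward, I would fall back on the alternative of working directly with the adjoint $8\times8$ action, where scalars disappear. There $\Ad(x_{\alpha_1+\alpha_2}(1))-1$ is nilpotent of order $3$, and the corresponding cube of $\Ad(Y)-1$ produces a nonzero multiple of $a^2$, with coefficients coprime to $2$ since $2\in R^\times$. This again yields $a^2=0$.

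For $b\in R^\times$, I would pass to the residue field $k_J$. There, as in the field step, $a\equiv0$, so $\overline{Y}=x_{\alpha_1+\alpha_2}(\overline b)$ is conjugate to $x_{\alpha_1+\alpha_2}(1)$. Since $x_{\alpha_1+\alpha_2}(0)=1$ is not conjugate to a nontrivial element, $\overline b\neq0$, so $b$ is a unit in $R_J$. As this holds for all $J$, $b\in R^\times$. The main obstacle I expect is the scalar ambiguity in $\PGL_3$ (equivalently, keeping control of torus factors) in the step giving $a^2=0$. The adjoint-representation nilpotency computation is the most robust way around it, and that is the route I would try first if the $3\times3$ bookkeeping fails.
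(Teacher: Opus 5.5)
Your set-up is correct: over $R_J$ you lift the conjugacy to $hYh^{-1}=\lambda(I+E_{13})$ with $\lambda^3=1$, and you compute $(Y-I)^2=a^2E_{13}$. The residue-field argument for $b\in R^\times$ is also fine. The gap is the step $\lambda=1$, on which $a^2=0$ entirely depends.

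Write $\lambda^{-1}=1+\mu$ and $Y-I=aN+cN^2$ with $N=E_{12}+E_{23}$ and $c=b+a^2$. Then $(\lambda^{-1}Y-I)^2=\mu^2I+2\mu\lambda^{-1}(aN+cN^2)+\lambda^{-2}a^2N^2$. So the entry relations say exactly $\mu^2=0$, $\mu a=0$ and $a^2=-2\lambda\mu b$. They tie $a^2$ to $\mu$ rather than killing either. The condition $\lambda^3=1$ only adds $3\mu=0$, and $\lambda\equiv 1\pmod J$ adds nothing. So you get $\lambda=1$ only if $3\in R_J^\times$ (where the trace comparison $3=3\lambda$ you set aside already suffices) or if $R_J$ is reduced.

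The adjoint fallback fails outright. From $Y^k=I+kaN+\bigl(kc+\binom k2 a^2\bigr)N^2$ and $(\Ad Y-1)^3=\sum_{k=0}^{3}\binom 3k(-1)^{3-k}\Ad(Y^k)$ one gets $(\Ad Y-1)^3X=3a^3(NXN^2-N^2XN)+9a^4N^2XN^2$. This condition gives only $3a^3=0$, never $a^2=0$ even in characteristic $0$, and it is vacuous in characteristic $3$.

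In fact the gap cannot be closed, because the proposition is false in the stated generality. Let $R=\mathbb F_3[t]/(t^3)$, so $2\in R^\times$. Let $\zeta=1+t$, so $\zeta^3=1$. Put $N'=X_0-I$ and $A=\zeta X_0-I=tI+\zeta N'$, and write $[\,\cdot\,]$ for classes in $\PGL_3(R)$.

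Both $N'$ and $A$ have zero cube and cyclic vector $e_3$. Let $B_1$ and $B_2$ be the matrices with columns $e_3,N'e_3,N'^2e_3$ and $e_3,Ae_3,A^2e_3$ respectively. Then $g=B_2B_1^{-1}$ satisfies $gN'g^{-1}=A$, i.e.\ $gX_0g^{-1}=\zeta X_0$, and $\det g=\zeta^3=1$. Since $\SL_3(R)=E_3(R)$ for local $R$, $\varphi=i_{[g]}$ is an inner automorphism of $E_{\mathrm{ad}}(\mathbf A_2,R)$ with $\varphi(X_0)=X_0$. Yet $\varphi(x_{\alpha_1+\alpha_2}(1))=[I+A^2]=x_{\alpha_1}(a)x_{\alpha_2}(a)x_{\alpha_1+\alpha_2}(b)$ with $a=-t-t^2$ and $b=1+t+t^2$, so $a^2=t^2\neq 0$.

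The paper's local Gauss-decomposition step cannot fill your gap either. In this example the $V$-part of every conjugating element is $x_{-\alpha_1}(-t)\,x_{-\alpha_2}(t)\,x_{-\alpha_1-\alpha_2}(t^2)$. So its claims $c_3=0$ and $c_1c_2=0$, justified by ``the torus factor cannot be absorbed'', are false.

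Under the extra hypothesis $3\in R^\times$ (or $R$ reduced), your $3\times 3$ argument is complete. It is then considerably shorter and more transparent than the paper's route.
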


\subsection{\texorpdfstring{The image of $x_{\alpha_1}(1)$}{The image of xalpha1(1)}}\label{subsec:image-a1}

Let $X_1:=\varphi(x_{\alpha_1}(1))$. Then $X_1$ commutes with $X_{12}$ in all localizations and, hence, after reduction to every residue field.
Recall from Subsection~\ref{subsec:image-a1+a2} that
\[
X_{12}=x_{\alpha_1}(a)\,x_{\alpha_2}(a)\,x_{\alpha_1+\alpha_2}(b),\qquad a^2=0,\quad b\in R^\times,
\]
and in particular $\overline a=0$ in each residue field.

\medskip
\noindent\textbf{Field step.}
Let $k$ be a residue field of $R$.
We already know that in this case $a=0$, so
\[
X_{12}=x_{\alpha_1+\alpha_2}(b),\qquad b\in k^\times.
\]
Since $X_1$ commutes with $X_{12}$, it normalizes the one-parameter subgroup
$\{x_{\alpha_1+\alpha_2}(t)\mid t\in k\}$.

Write the Bruhat decomposition of $X_1$:
\[
X_1=t_1(a_1)t_2(a_2)\,x_{\alpha_1}(b_1)\,x_{\alpha_2}(b_2)\,
x_{\alpha_1+\alpha_2}(b_3)\,\mathbf w\,
x_{\alpha_1}(c_1)\,x_{\alpha_2}(c_2)\,x_{\alpha_1+\alpha_2}(c_3),
\]
where $\mathbf w$ is an element of the Weyl group of type~$\mathbf A_2$.

Conjugating $X_{12}=x_{\alpha_1+\alpha_2}(b)$ by this element gives
\begin{multline*}
X_1\,x_{\alpha_1+\alpha_2}(b)\,X_1^{-1}
=t_1(a_1)t_2(a_2)\,x_{\alpha_1}(b_1)\,x_{\alpha_2}(b_2)\,
x_{\alpha_1+\alpha_2}(b_3)\,\mathbf w\,
x_{\alpha_1+\alpha_2}(b)\,\mathbf w^{-1}\times \\
\times
\bigl(x_{\alpha_1}(b_1)x_{\alpha_2}(b_2)x_{\alpha_1+\alpha_2}(b_3)\bigr)^{-1}
t_1(a_1)^{-1}t_2(a_2)^{-1}.
\end{multline*}

Since conjugation by $t_i(a_i)$ acts by the root weights,
\[
t_1(a_1)t_2(a_2)\,x_{\alpha_1+\alpha_2}(b)\,t_1(a_1)^{-1}t_2(a_2)^{-1}
= x_{\alpha_1+\alpha_2}(a_1^{-1}a_2^{-1}b),
\]
and conjugation by $\mathbf w$ sends $x_{\alpha_1+\alpha_2}(b)$ to
$x_{w(\alpha_1+\alpha_2)}(\pm b)$.
Thus the condition $X_1x_{\alpha_1+\alpha_2}(b)X_1^{-1}=x_{\alpha_1+\alpha_2}(b)$
implies
\[
x_{\alpha_1+\alpha_2}(a_1^{-1}a_2^{-1}b)
= x_{w(\alpha_1+\alpha_2)}(b).
\]
Now, if $\mathbf w\neq e$, then $w(\alpha_1+\alpha_2)$ is a negative root,
while the left side is a positive root element, which is impossible.
Hence $\mathbf w=e$.
With $\mathbf w=e$, both sides correspond to the same positive root, and
equality of one-parameter subgroups gives $a_1^{-1}a_2^{-1}=1$,
i.e. $a_1a_2=1$.
Therefore in the field case $\mathbf w=e$, $a_1a_2=1$.

\medskip
\noindent\textbf{Local step.}
Fix a maximal ideal $J\subset R$ and work over the local ring $R_J$.
Write
\[
X_1=t_1(r_1)t_2(r_2)\,x_{\alpha_1}(s_1)\,x_{\alpha_2}(s_2)\,x_{\alpha_1+\alpha_2}(s_3)\,
x_{-\alpha_1}(u_1)\,x_{-\alpha_2}(u_2)\,x_{-\alpha_1-\alpha_2}(u_3),
\]
with $u_i\in \Rad R_J$.
We also keep the form
\[
X_{12}=x_{\alpha_1}(a)\,x_{\alpha_2}(a)\,x_{\alpha_1+\alpha_2}(b),\qquad a^2=0,\quad b\in R^\times.
\]
If $u_3\neq0$, conjugating $x_{\alpha_1+\alpha_2}(b)$ by $x_{-\alpha_1-\alpha_2}(u_3)$ produces a nontrivial torus factor
$h_{\alpha_1+\alpha_2}(1+u_3\cdot(\cdots))$ which cannot be absorbed; since $b$ is a unit and no such torus appears on the other side, we must have $u_3=0$.

The commutativity $X_{12}X_1=X_1X_{12}$ gives, after cancelling the common outer torus $t_1(r_1)t_2(r_2)$ and the positive unipotent block of $X_1$,
\begin{align*}
&x_{\alpha_1}(r_1a)\,x_{\alpha_2}(r_2a)\,x_{\alpha_1+\alpha_2}(r_1r_2b)\,x_{-\alpha_1}(u_1)\,x_{-\alpha_2}(u_2)\\
&\qquad=\ x_{-\alpha_1}(u_1)\,x_{-\alpha_2}(u_2)\,x_{\alpha_1}(a)\,x_{\alpha_2}(a)\,x_{\alpha_1+\alpha_2}(b).
\end{align*}
Moving $x_{-\alpha_i}(u_i)$ through $x_{\alpha_i}(a)$ (rank-one identity) creates the torus factor $h_{\alpha_i}(1+au_i)$.
There is no torus factor on the left, hence
\[
au_1=au_2=0. \tag{$\ast$}
\]
Reorder the right-hand side through $x_{\alpha_1+\alpha_2}(b)$ using the Chevalley relations; one gets
\[
x_{\alpha_1}(r_1a)\,x_{\alpha_2}(r_2a)\,x_{\alpha_1+\alpha_2}(r_1r_2b)
=\ x_{\alpha_1}(a\pm u_2 b)\,x_{\alpha_2}(a\pm u_1 b)\,x_{\alpha_1+\alpha_2}(b).
\]
Comparing the $\alpha_1$- and $\alpha_2$-coordinates gives
\[
r_1a=a\pm u_2 b,\qquad r_2a=a\pm u_1 b.
\]
Since $b\in R^\times$, this shows that
\[
u_2=\pm (r_1-1)\,b^{-1}\,a,\qquad u_1=\pm (r_2-1)\,b^{-1}\,a,
\]
i.e.
\[
u_1=a\gamma_1,\qquad u_2=a\gamma_2\qquad\text{for some }\gamma_1,\gamma_2\in R_J. \tag{$\dagger$}
\]
With \textup{($\dagger$)}, any correction to the $\alpha_1+\alpha_2$-coordinate coming from commuting $x_{-\alpha_i}(u_i)$ past $x_{\alpha_i}(a)$ is proportional to $a^2$ and hence vanishes.
Therefore comparing the $\alpha_1+\alpha_2$-coordinate gives
\[
r_1r_2\,b=b\quad\Longrightarrow\quad r_1r_2=1.
\]
Set $r:=r_1\in 1+J$ and write $t_1(r)t_2(r^{-1})$ for the outer torus.
Using \textup{($\dagger$)} we obtain the normalized local expression
\[
X_1
= t_1(r)\,t_2(r^{-1})\,
x_{\alpha_1}(s_1)\,x_{\alpha_2}(s_2)\,x_{\alpha_1+\alpha_2}(s_3)\,
x_{-\alpha_1}(a\gamma_1)\,x_{-\alpha_2}(a\gamma_2),
\]
with $a^2=0$ and $b\in R^\times$ as above.

\subsection{\texorpdfstring{Second constraint from the relation $x_{\alpha_1}(1)X_0=x_{\alpha_1+\alpha_2}(1)\,X_0\,x_{\alpha_1}(1)$}{Second constraint from the relation xalpha1(1)X0=xalpha1+alpha2(1)X0xalpha1(1)}}\label{subsec:second-constraint}

Applying $\varphi$ to this identity gives
\[
X_1\,X_0 = X_{12}\,X_0\,X_1.
\]
Recall that in a localization $R_J$ we have
\[
X_{12}=x_{\alpha_1}(a)\,x_{\alpha_2}(a)\,x_{\alpha_1+\alpha_2}(b),
\qquad a^2=0,\quad b\in R_J^\times,
\]
and
\[
X_1=t_1(r)\,t_2(r^{-1})\,
x_{\alpha_1}(s_1)\,x_{\alpha_2}(s_2)\,x_{\alpha_1+\alpha_2}(s_3)\,
x_{-\alpha_1}(a\gamma_1)\,x_{-\alpha_2}(a\gamma_2).
\]
In the special case $u^2=0$ one has
\begin{equation}\label{eq:nilpotent-rank1}
x_{-\alpha}(u)\,x_{\alpha}(1)
= h_{\alpha}(1-u)\,x_{\alpha}(1+u)\,x_{-\alpha}(u).
\end{equation}
Applying \eqref{eq:nilpotent-rank1} to $x_{-\alpha_i}(a\gamma_i)\,x_{\alpha_i}(1)$ moves the negative root factors to the far right. Hence, for comparing the $T$- and $U^+$-parts in the identity $X_1X_0=X_{12}X_0X_1$, we may ignore the common rightmost block and work with
\[
\widetilde X_1:=t_1(r)\,t_2(r^{-1})\,x_{\alpha_1}(s_1)\,x_{\alpha_2}(s_2)\,x_{\alpha_1+\alpha_2}(s_3).
\]
Conjugating $X_0$ past the torus gives
\[
X_0\,t_1(r)\,t_2(r^{-1})
= t_1(r)\,t_2(r^{-1})\,x_{\alpha_1}(r^{-1})\,x_{\alpha_2}(r).
\]
Multiplying $X_1X_0=X_{12}X_0X_1$ on the left by $t_1(r)^{-1}t_2(r)$
we obtain an equality entirely inside the positive unipotent subgroup:
\begin{multline}\label{eq:Uplus-eq}
x_{\alpha_1}(s_1)\,x_{\alpha_2}(s_2)\,x_{\alpha_1+\alpha_2}(s_3)\,x_{\alpha_1}(1)\,x_{\alpha_2}(1)
=\\
=x_{\alpha_1}(a)\,x_{\alpha_2}(a)\,x_{\alpha_1+\alpha_2}(b)\,
x_{\alpha_1}(r^{-1})\,x_{\alpha_2}(r)\,
x_{\alpha_1}(s_1)\,x_{\alpha_2}(s_2)\,x_{\alpha_1+\alpha_2}(s_3).
\end{multline}
Using the standard order $\alpha_1,\alpha_2,\alpha_1+\alpha_2$ and the relation
$$
x_{\alpha_2}(v)\,x_{\alpha_1}(u)
= x_{\alpha_1}(u)\,x_{\alpha_2}(v)\,x_{\alpha_1+\alpha_2}(- uv),
$$
the $\alpha_1$- and $\alpha_2$-parameters in \eqref{eq:Uplus-eq} are
\[
s_1+1,\ s_2+1\quad\text{on the left, and}\quad r^{-1}+s_1,\ r+s_2\quad\text{on the right.}
\]
Hence $r=1$.
For $r=1$ the only contribution to the $\alpha_1+\alpha_2$-coordinate comes from commuting $x_{\alpha_2}(s_2)$ past $x_{\alpha_1}(1)$.
Comparing these parameters yields
\[
b=s_1-s_2,
\]
which is a unit in $R_J$ since $b\in R_J^\times$.
Conjugating $X_1$ by $x_{\alpha_1}(\mu)x_{\alpha_2}(\mu)$ affects only the $\alpha_1+\alpha_2$-coordinate:
\begin{multline*}
x_{\alpha_1}(\mu)x_{\alpha_2}(\mu)
\,x_{\alpha_1}(s_1)x_{\alpha_2}(s_2)x_{\alpha_1+\alpha_2}(s_3)\,
x_{\alpha_2}(-\mu)x_{\alpha_1}(-\mu)
=\\
=x_{\alpha_1}(s_1)x_{\alpha_2}(s_2)
  x_{\alpha_1+\alpha_2}\bigl(s_3+(s_2-s_1)\mu\bigr).
\end{multline*}
Since $b=s_1-s_2$ is invertible, choosing $\mu=s_3/b$ eliminates the $\alpha_1+\alpha_2$-component.
Renaming $s:=s_2$ (so $s_1=b+s$) gives the local normal form
\[
X_1=x_{\alpha_1}(b+s)\,x_{\alpha_2}(s),
\qquad  b\in R_J^\times.
\]

\subsection{\texorpdfstring{Field conjugacy constraints for $X_1$ and $X_2$}{Field conjugacy constraints for X1 and X2}}\label{subsec:field-conjugacy}

Let $J\subset R$ be a maximal ideal. Work in the localization $R_J$ with residue field
$k_J:=R_J/\Rad R_J$, and denote reduction modulo $\Rad R_J$ by a bar.
From the previous subsection we have the local normal form
\begin{equation}\label{eq:X1-local}
X_1 = x_{\alpha_1}(b+s)\,x_{\alpha_2}(s),
\qquad b\in R_J^\times,\ s\in R_J.
\end{equation}
Since $X_0=x_{\alpha_1}(1)x_{\alpha_2}(1)$, in the standard $A_2$ realization one checks
\[
X_0^{-1}x_{\alpha_1}(1)=x_{\alpha_2}(-1),
\]
hence
\begin{equation}\label{eq:X2inv-local}
X_2^{-1}=\varphi(x_{\alpha_2}(-1))=X_0^{-1}X_1
= x_{\alpha_1}(b+s-1)\,x_{\alpha_2}(s-1)\,x_{\alpha_1+\alpha_2}(1-b-s).
\end{equation}
(Equivalently, in $U$ one has
\begin{equation}\label{eq:X2-local}
X_2
= x_{\alpha_1}(1-b-s)\,x_{\alpha_2}(1-s)\,x_{\alpha_1+\alpha_2}\bigl(s(s+b-1)\bigr),
\end{equation}
obtained by inverting \eqref{eq:X2inv-local} inside the group~$U$.)

\medskip
\noindent\textbf{A field criterion.}
Over the field $k_J$ we identify $G_{\mathrm{ad}}(\mathbf A_2,k_J)\simeq \mathrm{PGL}_3(k_J)$ using the
standard root subgroups. In this model,
a root element satisfies
\[
(g-I)^2=0\quad\text{and}\quad g\neq I.
\]
Since $\varphi$ preserves conjugacy classes, the same nilpotency condition must hold for
$\overline{X_1}$ and $\overline{X_2}$.
Write a general element of $U(k_J)$ in the ordered form
\[
u=x_{\alpha_1}(u_1)\,x_{\alpha_2}(u_2)\,x_{\alpha_1+\alpha_2}(u_3).
\]
A direct matrix multiplication gives
\[
u = I + u_1E_{12}+u_2E_{23}+(u_3+u_1u_2)E_{13},
\qquad\text{hence}\qquad
(u-I)^2 = (u_1u_2)E_{13}.
\]
Therefore
\begin{equation}\label{eq:transvection-criterion}
(u-I)^2=0 \iff u_1u_2=0.
\end{equation}
From \eqref{eq:X1-local} we have
\[
\overline{X_1}=x_{\alpha_1}(\overline{b}+\overline{s})\,x_{\alpha_2}(\overline{s}).
\]
Applying \eqref{eq:transvection-criterion} yields
\[
\overline{s}\,(\overline{b}+\overline{s})=0.
\]
Since $\overline{b}\neq 0$ in $k_J$, there are only two possibilities:
\[
\text{either }\ \overline{s}=0,\qquad
\text{or }\ \overline{b}+\overline{s}=0.
\]
From \eqref{eq:X2inv-local},
\[
\overline{X_2^{-1}}
= x_{\alpha_1}(\overline{b}+\overline{s}-1)\,x_{\alpha_2}(\overline{s}-1)\,
  x_{\alpha_1+\alpha_2}(1-\overline{b}-\overline{s}).
\]
Applying \eqref{eq:transvection-criterion} again gives
\[
(\overline{b}+\overline{s}-1)(\overline{s}-1)=0.
\]
Combining with the two alternatives above, we obtain exactly two mutually exclusive reductions:

\smallskip
\noindent\emph{Case I:} $\overline{s}=0$ forces $\overline{b}=1$.
Equivalently,
\[
s\in\Rad R_J,\qquad b\equiv 1\pmod J.
\]

\smallskip
\noindent\emph{Case II:} $\overline{b}+\overline{s}=0$ forces $\overline{s}=1$ and hence $\overline{b}=-1$.
Equivalently,
\[
s\equiv 1\pmod J,\qquad b\equiv -1\pmod J.
\]
Over $k_J$ this means
\[
\overline{X_1}=x_{\alpha_2}(1),\qquad
\overline{X_2}=x_{\alpha_1}(1)\,x_{\alpha_1+\alpha_2}(-1).
\]

\subsection{The images of diagonal involutions}\label{subsec:diag-invol}

The involution $h_{\alpha_1}(-1)$ commutes with $x_{\alpha_1}(1)$ and inverts $x_{\alpha_2}(1)$.
The involution $h_{\alpha_2}(-1)$ commutes with $x_{\alpha_2}(1)$ and inverts $x_{\alpha_1}(1)$.
They commute, and their product $h_{\alpha_1+\alpha_2}(-1)$ commutes with $x_{\alpha_1+\alpha_2}(1)$, inverts $x_{\alpha_1}(1)$ and $x_{\alpha_2}(1)$, and
\[
X_0^{h_{\alpha_1+\alpha_2}(-1)}
= x_{\alpha_1}(-1)x_{\alpha_2}(-1)
= x_{\alpha_2}(-1)x_{\alpha_1}(-1)x_{\alpha_1+\alpha_2}(1)
= X_0^{-1}\,x_{\alpha_1+\alpha_2}(1).
\]
Set
\[
H_1=\varphi\bigl(h_{\alpha_1}(-1)\bigr),\qquad
H_2=\varphi\bigl(h_{\alpha_2}(-1)\bigr),\qquad
H_{12}=\varphi\bigl(h_{\alpha_1+\alpha_2}(-1)\bigr).
\]

\noindent\textbf{The element $H_{12}$ modulo the radical.}
Since $H_{12}$ commutes with $X_{12}$, passing modulo the radical we obtain
\begin{multline*}
h_{\alpha_1}(a_1)h_{\alpha_2}(a_2)\,
x_{\alpha_1}(b_1)x_{\alpha_2}(b_2)x_{\alpha_1+\alpha_2}(b_3)\,
\mathbf w\,
x_{\alpha_1}(c_1)x_{\alpha_2}(c_2)x_{\alpha_1+\alpha_2}(c_3)\,
x_{\alpha_1+\alpha_2}(b)=\\
=\;
x_{\alpha_1+\alpha_2}(b)\,
h_{\alpha_1}(a_1)h_{\alpha_2}(a_2)\,
x_{\alpha_1}(b_1)x_{\alpha_2}(b_2)x_{\alpha_1+\alpha_2}(b_3)\,
\mathbf w\,
x_{\alpha_1}(c_1)x_{\alpha_2}(c_2)x_{\alpha_1+\alpha_2}(c_3).
\end{multline*}
This simplifies to
\[
x_{w(\alpha_1+\alpha_2)}(b)=x_{\alpha_1+\alpha_2}(a_1a_2\,b),
\]
hence $w(\alpha_1+\alpha_2)=\alpha_1+\alpha_2$ (so $\mathbf w=e$) and $a_1a_2=1$.
Using $H_{12}^2=1$, we have
\begin{multline*}
h_{\alpha_1}(a_1)h_{\alpha_2}(1/a_1)\,x_{\alpha_1}(b_1)x_{\alpha_2}(b_2)x_{\alpha_1+\alpha_2}(b_3)=\\
= x_{\alpha_1+\alpha_2}(-b_3)x_{\alpha_2}(-b_2)x_{\alpha_1}(-b_1)\,h_{\alpha_1}(1/a_1)h_{\alpha_2}(a_1)=\\
= h_{\alpha_1}(1/a_1)h_{\alpha_2}(a_1)\,x_{\alpha_2}(-b_2/a_1)\,x_{\alpha_1}(-a_1b_1)\,x_{\alpha_1+\alpha_2}(-b_3).
\end{multline*}
It follows that $a_1=\pm 1$ and $2b_3=b_1b_2$.
The case $a_1=1$ forces $b_1=b_2=0$, which is impossible; hence $a_1=a_2=-1$ and
\[
H_{12}\equiv h_{\alpha_1}(-1)h_{\alpha_2}(-1)\,x_{\alpha_1}(b_1)\,x_{\alpha_2}(b_2)\,x_{\alpha_1+\alpha_2}\!\left(-\frac{b_1b_2}{2}\right)\pmod{\Rad}.
\]
Next, from
\[
H_{12}\,X_0 \;=\; X_0^{-1}\,x_{\alpha_1+\alpha_2}(b)\,H_{12}
\]
we get (still modulo the radical)
\[
x_{\alpha_1}(b_1{+}1)\,x_{\alpha_2}(b_2{+}1)\,x_{\alpha_1+\alpha_2}(-b_2)
=
x_{\alpha_1}(1{+}b_1)\,x_{\alpha_2}(1{+}b_2)\,x_{\alpha_1+\alpha_2}(b-1-b_1),
\]
hence
\[
b_1-b_2=b-1.
\]
Using $H_{12}\,x_{\alpha_1}(1)=x_{\alpha_1}(-1)\,H_{12}$ we obtain
\[
x_{\alpha_1}(b_1)\,x_{\alpha_2}(b_2)\,x_{\alpha_1}(b+s)\,x_{\alpha_2}(s)
=
x_{\alpha_2}(s)\,x_{\alpha_1}(b+s)\,x_{\alpha_1}(b_1)\,x_{\alpha_2}(b_2),
\]
so
\[
b_2(b+s)=s\,(b+b_1+s).
\]
If $b=1$ and $s=0$, then $b_1=b_2$ and necessarily $b_2=0$, hence $H_{12}\equiv h_{\alpha_1+\alpha_2}(-1)\pmod{\Rad}$.
If $b=-1$ and $s=1$, then $b_1=0$ and $b_2=2$.
Thus over the residue field there are exactly two possibilities:
\[
H_{12}\equiv h_{\alpha_1+\alpha_2}(-1)\ \ \text{or}\ \ H_{12}\equiv h_{\alpha_1+\alpha_2}(-1)\,x_{\alpha_2}(2)\pmod{\Rad}.
\]

\medskip
\noindent\textbf{The element $H_{12}$ over $R_J$.}
Consider $H_{12}$ in a local ring $R_J$.
In both cases it admits a short Gauss decomposition, and from
\begin{multline*}
h_{\alpha_1}(a_1)h_{\alpha_2}(a_2)\,x_{\alpha_1}(b_1)x_{\alpha_2}(b_2)x_{\alpha_1+\alpha_2}(b_3)\,
x_{-\alpha_1}(c_1)x_{-\alpha_2}(c_2)x_{-\alpha_1-\alpha_2}(c_3)\,x_{\alpha_1+\alpha_2}(b)\\
=\;
x_{\alpha_1+\alpha_2}(b)\,
h_{\alpha_1}(a_1)h_{\alpha_2}(a_2)\,x_{\alpha_1}(b_1)x_{\alpha_2}(b_2)x_{\alpha_1+\alpha_2}(b_3)\,
x_{-\alpha_1}(c_1)x_{-\alpha_2}(c_2)x_{-\alpha_1-\alpha_2}(c_3)
\end{multline*}
we get $c_3=0$, then $c_1=c_2=0$, and finally $a_1a_2=1$.
From the involutivity of $H_{12}$ we deduce $b_3=-b_1b_2/2$ and $a_1=-1$; as in the field case,
\[
b_1-b_2=b-1.
\]
Thus
\[
H_{12}=h_{\alpha_1}(-1)h_{\alpha_2}(-1)\,
x_{\alpha_1}(b_1)\,x_{\alpha_2}(b_1{+}1{-}b)\,
x_{\alpha_1+\alpha_2}\!\left(-\frac{b_1\,(b_1{+}1{-}b)}{2}\right).
\]

\medskip
\noindent\textbf{The image of $h_{\alpha_1}(-1)$.}
Now determine $H_1=\varphi(h_{\alpha_1}(-1))$.
Modulo the radical, $H_1$ commutes with $H_{12}$, inverts $X_{12}$, commutes with $X_1$, and has order~$2$.
Write
\[
H_1=h_{\alpha_1}(a_1)h_{\alpha_2}(a_2)\,x_{\alpha_1}(c_1)x_{\alpha_2}(c_2)x_{\alpha_1+\alpha_2}(c_3)\,\mathbf w\,
x_{\alpha_1}(d_1)x_{\alpha_2}(d_2)x_{\alpha_1+\alpha_2}(d_3).
\]
From $X_{12}H_1=H_1X_{12}^{-1}$ we get $w(\alpha_1+\alpha_2)=\alpha_1+\alpha_2$, i.e. $\mathbf w=e$, and $a_1a_2=-1$.
Using that $H_1$ commutes with $X_1=x_{\alpha_1}(b+s)x_{\alpha_2}(s)$ we have
\begin{multline*}
h_{\alpha_1}(a_1)h_{\alpha_2}(-1/a_1)\,x_{\alpha_1}(c_1)x_{\alpha_2}(c_2)\,x_{\alpha_1}(b+s)x_{\alpha_2}(s)
=\\
=x_{\alpha_1}(b+s)x_{\alpha_2}(s)\,h_{\alpha_1}(a_1)h_{\alpha_2}(-1/a_1)\,x_{\alpha_1}(c_1)x_{\alpha_2}(c_2).
\end{multline*}
In the case $b=1$, $s=0$ this gives $a_1=-1$ and $c_2=0$, so
\[
H_1=h_{\alpha_1}(-1)\,x_{\alpha_1}(c_1)\,x_{\alpha_1+\alpha_2}(c_3),
\]
and since $H_1^2=1$ we must have
\[
H_1=h_{\alpha_1}(-1)\,x_{\alpha_1+\alpha_2}(c_3).
\]
Commuting with $H_{12}$ yields $b_1\equiv0\pmod{\Rad}$, i.e. over the field $H_{12}=h_{\alpha_1+\alpha_2}(-1)$.
In the case $b+s=0$, $s=1$ (equivalently $b=-1$) we get $a_1=1$ and $c_1=0$:
\[
H_1=h_{\alpha_2}(-1)\,x_{\alpha_2}(c_2)\,x_{\alpha_1+\alpha_2}(c_3),
\]
and the involution condition gives
\[
H_1=h_{\alpha_2}(-1)\,x_{\alpha_1+\alpha_2}(c_3).
\]
Commuting with $H_{12}$ then shows $b_1=0$ and in this case
\[
H_{12}=h_{\alpha_1+\alpha_2}(-1)\,x_{\alpha_2}(2).
\]

Over the local ring $R_J$, in both cases $H_1$ admits a short Gauss decomposition.
From $H_1X_{12}=X_{12}^{-1}H_1$ we obtain
\[
H_1=h_{\alpha_1}(a_1)h_{\alpha_2}(-1/a_1)\,x_{\alpha_1}(c_1)x_{\alpha_2}(c_2)x_{\alpha_1+\alpha_2}(c_3),
\]
and from $H_1$ commuting with $X_1$ one gets $a_1=\pm1$; thus either $s=0$ or $b+s=0$.
Therefore either $X_1=x_{\alpha_1}(b)$ or $X_1=x_{\alpha_2}(s)$.
In the first case $c_2=0$, in the second case $c_1=0$, and together with $H_1^2=1$ we obtain
\[
H_1=h_{\alpha_1}(-1)\,x_{\alpha_1+\alpha_2}(c_3)
\quad\text{or}\quad
H_1=h_{\alpha_2}(-1)\,x_{\alpha_1+\alpha_2}(c_3).
\]
Finally, using that $H_1$ and $H_{12}$ commute, we get in the first case
\[
b_1+1-b=0,\qquad b_1=b-1,\qquad
H_{12}=h_{\alpha_1}(-1)h_{\alpha_2}(-1)\,x_{\alpha_1}(b-1),
\]
and in the second case
\[
b_1=0,\qquad
H_{12}=h_{\alpha_1}(-1)h_{\alpha_2}(-1)\,x_{\alpha_2}(1-b).
\]
Thus, at this stage we have either
\begin{align*}
X_1&=x_{\alpha_1}(b),& X_2&=x_{\alpha_1}(1-b)x_{\alpha_2}(1),\\
H_{12}&=h_{\alpha_1+\alpha_2}(-1)x_{\alpha_1}(b-1),& H_1&=h_{\alpha_1}(-1)x_{\alpha_1+\alpha_2}(c),
\end{align*}
or
\begin{align*}
X_1&=x_{\alpha_2}(-b),& X_2&=x_{\alpha_1}(1)x_{\alpha_2}(1+b)x_{\alpha_1+\alpha_2}(-b),\\
H_{12}&=h_{\alpha_1+\alpha_2}(-1)x_{\alpha_2}(1-b),& H_1&=h_{\alpha_2}(-1)x_{\alpha_1+\alpha_2}(c).
\end{align*}
Clearly $H_2=\varphi(h_{\alpha_2}(-1))=H_1H_{12}$.
Since $H_2$ commutes with $X_2$, in the first case one obtains $b=1$; in the second case one obtains $b=-1$.
Therefore, in the first case
\[
\varphi\bigl(x_{\alpha}(1)\bigr)=x_{\alpha}(1)
\qquad\text{for all positive roots }\alpha\in\Phi^+,
\]
and the diagonal involutions are mapped as follows:
$\varphi(h_{\alpha_1}(-1))=h_{\alpha_1}(-1)x_{\alpha_1+\alpha_2}(c)$,
$\varphi(h_{\alpha_2}(-1))=h_{\alpha_2}(-1)x_{\alpha_1+\alpha_2}(c)$,
$\varphi(h_{\alpha_1+\alpha_2}(-1))=h_{\alpha_1+\alpha_2}(-1)$.
Conjugation by the element $x_{\alpha_1+\alpha_2}(c/2)$ eliminates the extra
$x_{\alpha_1+\alpha_2}(c)$-factors.
In the second case, after conjugation by the same element $x_{\alpha_1+\alpha_2}(c/2)$, we get
\begin{align*}
    \varphi(x_{\alpha_1}(1))&=x_{\alpha_2}(1),&
    \varphi(x_{\alpha_2}(1))&=x_{\alpha_1}(1)x_{\alpha_1+\alpha_2}(1),\\
    \varphi(x_{\alpha_1+\alpha_2}(1))&=x_{\alpha_1+\alpha_2}(-1),&
    \varphi(h_{\alpha_1}(-1))&=h_{\alpha_2}(-1),\\
     \varphi(h_{\alpha_2}(-1))&=h_{\alpha_1}(-1)x_{\alpha_2}(2).
\end{align*}

\subsection{\texorpdfstring{The image of $w_{\alpha_1}(1)$ in the first case}{The image of walpha1(1) in the first case}}\label{subsec:image-walpha1}

Let $W_1:=\varphi\bigl(w_{\alpha_1}(1)\bigr)$.
Recall the standard relations
\[
W_1^2=H_1,\qquad W_1\,H_2=H_{12}\,W_1,\qquad W_1\,X_2=X_{12}\,W_1.
\]
Over a residue field, writing $W_1$ in Bruhat form and using successively the relations above,
one gets $\mathbf w=s_{\alpha_1}$ and then
\[
W_1=h_{\alpha_1}(a_1)h_{\alpha_2}(a_2)\,w_{\alpha_1}(1).
\]
From $W_1^2=H_1$ one obtains $a_2=1$, and from $W_1X_2=X_{12}W_1$ one gets $a_1=1$.
Hence, modulo the radical,
\[
W_1\equiv w_{\alpha_1}(1).
\]
Over a local ring one writes
\[
W_1=w_{\alpha_1}(1)\,h_{\alpha_1}(a_1)h_{\alpha_2}(a_2)\,
x_{\alpha_1}(b_1)x_{\alpha_2}(b_2)x_{\alpha_1+\alpha_2}(b_3)\,
x_{-\alpha_1}(c_1)x_{-\alpha_2}(c_2)x_{-\alpha_1-\alpha_2}(c_3),
\]
with all parameters in the radical except possibly $a_1,a_2$.
The relations $H_1W_1H_1=W_1$ and $H_2W_1=W_1H_{12}$ imply
\[
b_2=b_3=c_2=c_3=0,\qquad b_1=c_1=0.
\]
Then $W_1^2=H_1$ gives $a_2=1$, and $W_1X_2=X_{12}W_1$ forces $a_1=1$.
Thus
\[
\varphi\bigl(w_{\alpha_1}(1)\bigr)=w_{\alpha_1}(1)
\]
in the first case.

\subsection{\texorpdfstring{The image of $w_{\alpha_1}(1)$ in the second case}{The image of walpha1(1) in the second case}}\label{subsec:image-walpha2}

Let again $W_1:=\varphi\bigl(w_{\alpha_1}(1)\bigr)$.
Over a residue field, the relations
\[
W_1^2=H_1,\qquad W_1\,H_2=H_{12}\,W_1,\qquad W_1\,X_2=X_{12}\,W_1
\]
lead to
\[
W_1=x_{\alpha_2}(-1)w_{\alpha_2}(1)x_{\alpha_2}(1)
\]
modulo the radical.
Conjugating our endomorphism by $x_{\alpha_2}(1)$, we obtain over the residue field exactly the graph automorphism interchanging the simple roots.
It therefore suffices to find an element which is not conjugate to its image under this graph automorphism.

\begin{prop}
Let $R$ be a commutative ring and let $G_{\mathrm{ad}}(A_2,R)$ be the adjoint Chevalley group of type~$A_2$ over $R$.
Set $\gamma=\alpha_1+\alpha_2$ and consider
\[
Y \;=\; x_{\alpha_1}(1)\,w_\gamma(1)\,x_{\alpha_2}(1)\in G_{\mathrm{ad}}(A_2,R).
\]
Let $\varphi$ be the automorphism of $G_{\mathrm{ad}}(A_2,R)$ such that
\[
\varphi(x_{\pm \alpha_1}(1))=x_{\pm \alpha_2}(1),\qquad
\varphi(x_{\pm \alpha_2}(1))=x_{\pm \alpha_1}(1).
\]
Then, if $R$ has at least one localization by a maximal ideal with residue field of characteristic $\ne 7$, the elements $Y$ and $\varphi(Y)$ are not conjugate in $G_{\mathrm{ad}}(A_2,R)$.
\end{prop}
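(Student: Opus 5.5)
The plan is to reduce to a residue field and then separate $Y$ and $\varphi(Y)$ by a conjugacy invariant of $\PGL_3$ that is insensitive to the scalar ambiguity of lifts. Suppose $gYg^{-1}=\varphi(Y)$ for some $g\in G_{\mathrm{ad}}(\mathbf A_2,R)$. Choose a maximal ideal $J$ whose residue field $k=k_J$ has $\charr k\neq 7$. By functoriality, $R\to R_J\to k$ gives a conjugacy $\overline g\,\overline Y\,\overline g^{-1}=\overline{\varphi(Y)}$ in $G_{\mathrm{ad}}(\mathbf A_2,k)\cong\PGL_3(k)$. It therefore suffices to show that the images of $Y$ and $\varphi(Y)$ are not conjugate in $\PGL_3(k)$. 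We may even pass to $\PGL_3(\bar k)$, since conjugacy over $k$ implies conjugacy over $\bar k$.

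\textbf{Choice of invariant.} All elements involved are products of root unipotents and $w$'s, so they have canonical lifts to $\SL_3$. Two such lifts $A,B\in\SL_3(\bar k)$ are conjugate in $\PGL_3$ exactly when $A$ is $\GL_3$-conjugate to $\lambda B$ for some $\lambda$ with $\lambda^3=1$. Hence the quantities
\[
\tr(A)^3\qquad\text{and}\qquad \tr(A)\,\tr(A^{-1})
\]
are $\PGL_3$-conjugacy invariants. For $A\in\SL_3$ we have $\tr(A^{-1})=c_2(A)$, the sum of the principal $2\times 2$ minors. I will use the first invariant.

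\textbf{Computing the lift of $Y$.} In the realization of Subsection~4.1,
\[
w_\gamma(1)=(I+E_{13})(I-E_{31})(I+E_{13})=\begin{pmatrix}0&0&1\\0&1&0\\-1&0&0\end{pmatrix}.
\]
Then $Y=(I+E_{12})\,w_\gamma(1)\,(I+E_{23})$, and a short multiplication gives
\[
Y=\begin{pmatrix}0&1&2\\0&1&1\\-1&0&0\end{pmatrix}.
\]
So $\tr Y=1$, and the principal minors are $0,\ 2,\ 0$, giving $\tr(Y^{-1})=c_2(Y)=2$.

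\textbf{Computing the lift of $\varphi(Y)$.} The automorphism $\varphi$ is the graph automorphism. On $\SL_3$ (or $\PGL_3$) it can be realized as $A\mapsto J\,(A^{-1})^{\mathsf T}J^{-1}$ for a suitable signed antidiagonal matrix $J$. I will verify that this formula sends $x_{\pm\alpha_1}(1)\mapsto x_{\pm\alpha_2}(1)$ and $x_{\pm\alpha_2}(1)\mapsto x_{\pm\alpha_1}(1)$. This check fixes the signs in $J$, and any leftover diagonal-torus discrepancy is absorbed by an inner automorphism, which does not affect conjugacy classes. Since the two automorphisms agree on generators, they agree on $E$ and in particular on $Y$. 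Consequently
\[
\tr\varphi(Y)=\tr\bigl((Y^{-1})^{\mathsf T}\bigr)=\tr(Y^{-1})=2.
\]
As a cross-check, I would also compute $\varphi(Y)=x_{\alpha_2}(1)\,w_\gamma(\pm1)\,x_{\alpha_1}(1)$ directly as a matrix.

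\textbf{Conclusion and main obstacle.} Comparing $\tr(\cdot)^3$ gives $1$ for $Y$ and $8$ for $\varphi(Y)$. These differ by $7$, so they are unequal in $k$ because $\charr k\neq 7$. Hence $Y$ and $\varphi(Y)$ are not conjugate in $\PGL_3(k)$, and therefore not conjugate in $G_{\mathrm{ad}}(A_2,R)$. The only delicate step is the sign bookkeeping: pinning down $w_\gamma(1)$ and the precise matrix form of the graph automorphism. A wrong sign could replace $\varphi(Y)$ by a lift differing by a torus factor. Because of this, I would do the direct computation of $\varphi(Y)$ as an independent check that its trace is $\pm2$ with $\tr(\cdot)^3=8$. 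Even if the sign comes out as $-2$, the invariant $\tr(\cdot)^3$ becomes $-8$, which differs from $1$ by $9$. In that case the argument needs $\charr k\neq 3$ instead of $\neq 7$, so the sign really must be settled correctly to match the hypothesis of the proposition.
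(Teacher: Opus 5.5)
Your proof is correct and is essentially the paper's argument: reduce to a residue field of characteristic $\neq 7$, lift to $\SL_3$, and use that conjugacy in $\PGL_3$ forces $gYg^{-1}=\lambda Y'$ with $\lambda^3=1$, so $1=\tr(Y)^3=\tr(Y')^3=8$. Your $\tr(\cdot)^3$ invariant simply packages the paper's comparison of determinants and traces. The one difference is that you get $\tr\varphi(Y)=\tr(Y^{-1})=2$ from the inverse-transpose model, whereas the paper writes out $Y'$ explicitly using $\varphi(w_\gamma(1))=w_\gamma(-1)$.

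Your closing worry about signs is unfounded. The antidiagonal $J$ with entries $1,-1,1$ realizes $\varphi$ exactly on $\SL_3$. In any case $\tr\bigl(J(A^{-1})^{T}J^{-1}\bigr)=\tr(A^{-1})$ for every $J$, so the value $-2$ cannot occur.
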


\begin{proof}
Assume that $Y$ and $\varphi(Y)$ are conjugate over $R$.
Then they remain conjugate over every localization $R_{\mathfrak m}$, and hence also over the residue field of every such localization.
Choose a maximal ideal $\mathfrak m$ for which the residue field has characteristic different from~$7$.
Reducing to that residue field and then passing to an algebraic closure, we may work inside $\PGL_3(K)$ for an algebraically closed field $K$ with $\charr K\ne 7$.
In the standard realization,
\[
w_\gamma(1)=
\begin{pmatrix}
0 & 0 & 1\\
0 & 1 & 0\\
-1& 0 & 0
\end{pmatrix},
\qquad
Y=
\begin{pmatrix}
0 & 1 & 2\\
0 & 1 & 1\\
-1& 0 & 0
\end{pmatrix}.
\]
Since the graph automorphism sends $x_\gamma(1)$ to $x_\gamma(-1)$, it sends $w_\gamma(1)$ to $w_\gamma(-1)$, and therefore $\varphi(Y)$ lifts to
\[
Y'=
\begin{pmatrix}
0 & 0 & -1\\
1 & 2 & 0\\
1 & 1 & 0
\end{pmatrix}.
\]
If $Y$ and $Y'$ were conjugate in $\PGL_3(K)$, then there would exist $g\in \GL_3(K)$ and $\lambda\in K^*$ such that $gYg^{-1}=\lambda Y'$.
Comparing determinants gives $\lambda^3=1$, while comparing traces gives $1=2\lambda$, so $\lambda=1/2$.
Hence $2^3=1$ in $K$, i.e. $7=0$, a contradiction.
\end{proof}

It remains to treat the case where every residue field has characteristic $7$.
Define
\[
\widetilde w_\alpha:=x_\alpha(3)\,x_{-\alpha}(-5)\,x_\alpha(3)
\]
and
\[
\widetilde h_\alpha(3):=\widetilde w_\alpha\,w_\alpha(1)^{-1}.
\]
For every maximal ideal $\mathfrak m$, in the residue field of characteristic $7$ one has $3^{-1}=5$, so $\widetilde h_\alpha(3)$ reduces to $h_\alpha(3)$.
Now set
\[
Y:=x_{\alpha_1}(1)\,x_{-\alpha_1}(2)\,\widetilde h_{\alpha_2}(3).
\]
Its image under the graph automorphism becomes
\[
\varphi(Y)=x_{\alpha_2}(1)\,x_{-\alpha_2}(2)\,\widetilde h_{\alpha_1}(3).
\]
Reducing to any residue field $k$ of characteristic~$7$, these elements lift to the matrices
\[
A=
\begin{pmatrix}
3&3&0\\
2&3&0\\
0&0&5
\end{pmatrix},
\qquad
B=
\begin{pmatrix}
3&0&0\\
0&1&1\\
0&3&1
\end{pmatrix}.
\]
If the images of $A$ and $B$ were conjugate in $\PGL_3(k)$, then there would exist $u\in\GL_3(k)$ and $\lambda\in k^\times$ such that $uAu^{-1}=\lambda B$.
Comparing determinants gives $\lambda^3=1$, whereas comparing traces gives $\lambda=4/5$.
In characteristic $7$ this is impossible, since $(4/5)^3\ne 1$.
Thus $Y$ and $\varphi(Y)$ are not conjugate.
Therefore the second case is impossible.

We conclude that all $x_\alpha(1)$, $\alpha\in\Phi$, are mapped identically.

\subsection{\texorpdfstring{The images of all $x_\alpha(r)$}{The images of all xalpha(r)}}\label{subsec:images-xalpha}

Since each root subgroup
\[
X_\alpha=\{\,x_\alpha(r)\mid r\in R\,\}
\]
is the double centralizer of~$x_\alpha(1)$, we have
\[
\varphi(x_\alpha(r))=x_\alpha(r'),\qquad r'=\rho(r),
\]
where $\rho\colon R\to R$ is some ring endomorphism.
Let us prove that $\rho=\mathrm{id}_R$.
For type~$\mathbf A_2$ the trace in the adjoint representation satisfies
\[
F(s,t):=\tr\!\bigl(x_\alpha(t)x_{-\alpha}(s)\bigr)
=s^2t^2-6st+8.
\]
Since $\varphi$ is locally inner, trace is preserved:
\[
F(s,t)=F(\rho(s),\rho(t))\qquad(s,t\in R).
\]
Fix $s=1$ and write
\[
F(t)=t^2-6t+8,\qquad
\widehat F(t)=\rho(t)^2-6\rho(t)+8.
\]
Compute
\[
\Delta(t):=[F(t{+}1)+F(-t{-}1)]-[F(t)+F(-t)]=4t+2,
\]
and the same formula with $\rho(t)$ in place of~$t$ gives
\[
\widehat\Delta(t)=4\rho(t)+2.
\]
From the trace identity, $\Delta(t)=\widehat\Delta(t)$, hence
\[
4(\rho(t)-t)=0\qquad\forall\,t\in R.
\]
Since $2\in R^\times$, also $4\in R^\times$, so $\rho(t)=t$ for all $t\in R$.
Thus
\[
\varphi(x_\alpha(t))=x_\alpha(t)\qquad
(\forall\,\alpha\in\Phi,\ \forall\,t\in R).
\]
As the elementary adjoint group $E_{\mathrm{ad}}(\mathbf A_2,R)$
is generated by all root subgroups, the normalized endomorphism~$\varphi$ is the identity on $E_{\mathrm{ad}}(\mathbf A_2,R)$.
Undoing the normalization, every locally inner endomorphism of
$E_{\mathrm{ad}}(\mathbf A_2,R)$ is inner.

\begin{thm}\label{thm:Sha-elementary-A2}
Let $R$ be a commutative ring with $1/2\in R$.
Then every locally inner endomorphism of the elementary adjoint Chevalley group
$E_{\mathrm{ad}}(\mathbf A_2,R)$ is inner. Equivalently, $E_{\mathrm{ad}}(\mathbf A_2,R)$ is $\Sha$-rigid.
\end{thm}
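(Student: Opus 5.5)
Theorem~\ref{thm:Sha-elementary-A2} is essentially the assembly of the results of Subsections~\ref{subsec:X0}--\ref{subsec:images-xalpha}. The plan is to start from an arbitrary locally inner endomorphism $\psi$ of $E=E_{\mathrm{ad}}(\mathbf A_2,R)$ and reduce to the normalized situation used there. Since $\psi(X_0)$ is conjugate to $X_0=x_{\alpha_1}(1)x_{\alpha_2}(1)$ in $E$, there is $y\in E$ with $i_y\circ\psi(X_0)=X_0$. Replacing $\psi$ by $\varphi:=i_y\circ\psi$ preserves local innerness, and $\psi$ is inner if and only if $\varphi$ is. So we may assume $\varphi(X_0)=X_0$.

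Next we run the chain of subsections, one result after another.

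\begin{itemize}
\item Proposition~\ref{prop:CentX0} places $\varphi(x_{\alpha_1+\alpha_2}(1))$ inside $C(X_0)$.
\item Proposition~\ref{prop:image-a1+a2} pins this image down.
\item Subsections~\ref{subsec:image-a1}--\ref{subsec:field-conjugacy} give the local normal form of $X_1=\varphi(x_{\alpha_1}(1))$ and split into Case I and Case II modulo each maximal ideal. The only modification allowed here is conjugation by $x_{\alpha_1}(\mu)x_{\alpha_2}(\mu)\in E$.
\item Subsection~\ref{subsec:diag-invol} uses the involutions $h_{\alpha_i}(-1)$ to show that $X_1$ is either $x_{\alpha_1}(b)$ or $x_{\alpha_2}(-b)$.
\end{itemize}

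The second alternative is ruled out by the non-conjugacy argument. In the characteristic $\neq 7$ case this uses the element $Y=x_{\alpha_1}(1)w_\gamma(1)x_{\alpha_2}(1)$. In the all-characteristic-$7$ case it uses $Y=x_{\alpha_1}(1)x_{-\alpha_1}(2)\widetilde h_{\alpha_2}(3)$. In the first case, Subsection~\ref{subsec:image-walpha1} gives $\varphi(w_{\alpha_1}(1))=w_{\alpha_1}(1)$, after a further conjugation by $x_{\alpha_1+\alpha_2}(c/2)\in E$.

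One point has to be checked globally rather than locally. The normalizing conjugations (by $x_{\alpha_1}(\mu)x_{\alpha_2}(\mu)$ with $\mu=s_3/b$, and by $x_{\alpha_1+\alpha_2}(c/2)$) must use parameters defined over $R$, not just over each $R_J$. I would patch the local computations: $b\in R^\times$ by Proposition~\ref{prop:image-a1+a2}, and $s_3$ and $c$ are entries of globally defined matrices. Hence these elements lie in $E$. Likewise, the dichotomy between the two cases is decided pointwise, but the second case is excluded at every maximal ideal, so only the first case survives globally.

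With $\varphi(x_{\pm\alpha_i}(1))=x_{\pm\alpha_i}(1)$ and $\varphi(w_{\alpha_1}(1))=w_{\alpha_1}(1)$ in hand, Weyl conjugation yields $\varphi(x_\alpha(1))=x_\alpha(1)$ for all $\alpha\in\Phi$. Each root subgroup is the double centralizer of $x_\alpha(1)$, so $\varphi(x_\alpha(t))=x_\alpha(\rho_\alpha(t))$ for some map $\rho_\alpha$. The trace identity \eqref{eq:traceA2B2} together with $2\in R^\times$ then forces $\rho_\alpha=\mathrm{id}$, as computed in Subsection~\ref{subsec:images-xalpha}.

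Since $E$ is generated by the $x_\alpha(t)$, the normalized $\varphi$ is the identity. Undoing the finitely many inner normalizations shows that $\psi$ is inner, so $\End_c(E)=\Inn(E)$ and $E$ is $\Sha$-rigid.

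The main obstacle I expect is the step $\varphi(x_\alpha(t))\in X_\alpha$. The double-centralizer claim has to hold over an arbitrary commutative ring, not just over fields, and additivity of $\rho_\alpha$ has to follow from $\varphi$ being a homomorphism. I would prove the double-centralizer claim by the same local-to-global method as Proposition~\ref{prop:CentX0}: compute the centralizer of $x_\alpha(1)$ over residue fields via Bruhat, lift to local rings via Gauss, then patch. The trace step needs only additivity of $\rho_\alpha$, not multiplicativity, because $\Delta$ is linear in $\rho(t)$.
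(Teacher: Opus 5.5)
Your assembly is the paper's own proof, which consists of pointing back to Subsections~\ref{subsec:X0}--\ref{subsec:images-xalpha}. Your remark that Case~II must be excluded maximal ideal by maximal ideal is an improvement on the paper's global phrasing.

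The genuine gap is in the step you single out, $\varphi(x_\alpha(t))\in X_\alpha$, but it is not where you locate it. Even granting $X_\alpha=C_E\bigl(C_E(x_\alpha(1))\bigr)$ over every commutative ring, the conclusion does not follow. From $\varphi(x_\alpha(1))=x_\alpha(1)$ you only learn that $\varphi(x_\alpha(t))$ commutes with $\varphi\bigl(C_E(x_\alpha(1))\bigr)$. To put it in the double centralizer you would need $C_E(x_\alpha(1))\subseteq\varphi\bigl(C_E(x_\alpha(1))\bigr)$. That is automatic for automorphisms, but a locally inner endomorphism is only known to be injective, which is precisely the distinction the introduction insists on. Moreover, $C_E(x_\alpha(1))$ contains elements $x_\beta(s)$ with arbitrary $s\in R$ whose images are exactly what is still unknown. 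So proving the double-centralizer identity by local-to-global patching would not close the step; the paper's one-line justification has the same defect.

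What does work is to centralize elements already known to be fixed. The element $x_{\alpha_1+\alpha_2}(t)$ commutes with $x_{\alpha_1}(1)$ and $x_{\alpha_2}(1)$, so $\varphi(x_{\alpha_1+\alpha_2}(t))\in C(X_0)\cap C(x_{\alpha_1}(1))$. Proposition~\ref{prop:CentX0}, together with $x_{\alpha_1}(1)x_{\alpha_2}(a)x_{\alpha_1}(-1)=x_{\alpha_1+\alpha_2}(a)x_{\alpha_2}(a)$, shows this intersection is exactly $X_{\alpha_1+\alpha_2}$. The other root subgroups then follow by conjugating with Weyl elements fixed by $\varphi$.

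That brings up a second, smaller hole: you treat $\varphi(x_{-\alpha_2}(1))=x_{-\alpha_2}(1)$ as in hand, but nothing in the chain gives it. Subsection~\ref{subsec:image-walpha1} only yields $\varphi(w_{\alpha_1}(1))=w_{\alpha_1}(1)$, hence $\varphi(x_{-\alpha_1}(1))=x_{-\alpha_1}(1)$. Since $s_{\alpha_1}$ stabilizes $\Phi^+\cup\{-\alpha_1\}$, every element shown to be fixed lies in the parabolic subgroup generated by the torus, $U$ and $X_{-\alpha_1}$. That subgroup does not contain $x_{-\alpha_2}(1)$, so conjugation by $w_{\alpha_1}(1)$ never reaches $-\alpha_2$ or $-(\alpha_1+\alpha_2)$.

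You need the analogous computation for $\varphi(w_{\alpha_2}(1))$, using $\varphi(w_{\alpha_2}(1))^2=H_2$ and its relations with $H_1$, $H_{12}$, $X_1$, $X_{12}$ as in Subsection~\ref{subsec:image-walpha1}. The paper passes over this as well. The same information is needed to evaluate $\varphi$ on the negative-root factors of the test elements $Y$ used to exclude Case~II.
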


\begin{proof}
By the computation above, after an inner normalization $\varphi$ fixes every root subgroup $X_\alpha$ pointwise, hence $\varphi=\mathrm{id}$ on $E_{\mathrm{ad}}(\mathbf A_2,R)$.
Undoing the normalization yields that the original locally inner endomorphism is inner.
\end{proof}

\subsection{\texorpdfstring{$\Sha$-rigidity of the adjoint group $G_{\mathrm{ad}}(\mathbf A_2,R)$}{Sha-rigidity of the adjoint group Gad(A2,R)}}\label{subsec:Sha-adjoint}

\begin{thm}\label{thm:A2-adjoint}
Let $R$ be a commutative ring with $1/2\in R$.
Then every locally inner endomorphism of the adjoint Chevalley group
$G_{\mathrm{ad}}(\mathbf A_2,R)$ is inner. Equivalently, $G_{\mathrm{ad}}(\mathbf A_2,R)$ is $\Sha$-rigid.
\end{thm}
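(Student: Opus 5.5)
The plan is to reduce Theorem~\ref{thm:A2-adjoint} to Theorem~\ref{thm:Sha-elementary-A2}, following the argument given for $\mathbf A_1$ after the remark in Section~\ref{sec:A1}. In rank $2$, however, normality of the elementary subgroup is unconditional. Let $\varphi\colon G_{\mathrm{ad}}(\mathbf A_2,R)\to G_{\mathrm{ad}}(\mathbf A_2,R)$ be locally inner.

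\textbf{Step 1: $\varphi$ maps $E_{\mathrm{ad}}$ into itself.} By Suslin--Kopeiko--Taddei, $E_{\mathrm{ad}}(\mathbf A_2,R)\trianglelefteq G_{\mathrm{ad}}(\mathbf A_2,R)$. Take $x\in E_{\mathrm{ad}}$. Then $\varphi(x)=g x g^{-1}$ for some $g\in G_{\mathrm{ad}}$, and by normality $\varphi(x)\in E_{\mathrm{ad}}$. So the restriction $\psi=\varphi|_{E_{\mathrm{ad}}}$ is an endomorphism of $E_{\mathrm{ad}}$.

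\textbf{Step 2: $\psi$ is locally inner in the sense of Theorem~\ref{thm:Sha-elementary-A2}.} This is the step I expect to be the main obstacle. We only know that $\psi(x)$ is conjugate to $x$ by an element of $G_{\mathrm{ad}}$, whereas Theorem~\ref{thm:Sha-elementary-A2} requires conjugacy inside $E_{\mathrm{ad}}$. I would handle this by rereading the proof of Theorem~\ref{thm:Sha-elementary-A2} and checking where conjugacy was actually used.

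\begin{itemize}
\item The initial normalization $\varphi(X_0)=X_0$ only needs a conjugating element in $G_{\mathrm{ad}}$; composing with an inner automorphism of $G_{\mathrm{ad}}$ is harmless.
\item The centralizer in Proposition~\ref{prop:CentX0} was already computed in $G_{\mathrm{ad}}$.
\item The image of $x_{\alpha_1+\alpha_2}(1)$ (Subsection~\ref{subsec:image-a1+a2}) was handled using conjugacy in $G_{\mathrm{ad}}(\mathbf A_2,R_J)$.
\item The field criteria and the characteristic-$7$ obstruction were phrased in $\PGL_3$ of a field.
\item The trace argument of Subsection~\ref{subsec:images-xalpha} is invariant under conjugation by any element of $\GL$.
\end{itemize}

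So the whole proof goes through verbatim with $G_{\mathrm{ad}}$-conjugacy. It yields $h\in G_{\mathrm{ad}}(\mathbf A_2,R)$ with $\varphi(x)=hxh^{-1}$ for all $x\in E_{\mathrm{ad}}$: conjugating by elements of $E_{\mathrm{ad}}$ or $G_{\mathrm{ad}}$ along the way is equally valid. Replacing $\varphi$ by $i_{h}^{-1}\circ\varphi$, which is still locally inner on $G_{\mathrm{ad}}$, we may assume $\varphi|_{E_{\mathrm{ad}}}=\mathrm{id}$.

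\textbf{Step 3: $\varphi$ is the identity on all of $G_{\mathrm{ad}}$.} Take $g\in G_{\mathrm{ad}}$ and $x\in E_{\mathrm{ad}}$. By normality $gxg^{-1}\in E_{\mathrm{ad}}$, so
\[
gxg^{-1}=\varphi(gxg^{-1})=\varphi(g)\,x\,\varphi(g)^{-1}.
\]
Hence $g^{-1}\varphi(g)$ centralizes $E_{\mathrm{ad}}$. It remains to see that $C_{G_{\mathrm{ad}}}(E_{\mathrm{ad}})$ is trivial. Let $c$ be in this centralizer; it commutes with $X_0$. By Proposition~\ref{prop:CentX0}, $c=x_{\alpha_1}(a)x_{\alpha_2}(a)x_{\alpha_1+\alpha_2}(b)$. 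Now compare $c$ with its conjugate by $h_{\alpha_1}(-1)$, which must equal $c$. This gives $2a=0$, so $a=0$ since $2\in R^\times$. Commuting with $x_{-\alpha_1-\alpha_2}(1)$ then gives $b=0$, most easily checked locally in $R_J$ via the torus factor produced by the rank-one identity \eqref{eq:rankone}. Therefore $\varphi(g)=g$ for all $g$, and the original $\varphi$ is inner.
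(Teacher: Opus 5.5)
Your proposal is correct and follows the paper's proof: restrict to the normal subgroup $E_{\mathrm{ad}}(\mathbf A_2,R)$, normalize using Theorem~\ref{thm:Sha-elementary-A2}, and conclude because $g^{-1}\varphi(g)$ centralizes $E_{\mathrm{ad}}$, whose centralizer in $G_{\mathrm{ad}}$ is trivial. You are more careful than the paper in two places: you notice that the restriction is only locally inner with respect to $G_{\mathrm{ad}}$-conjugacy (the paper applies Theorem~\ref{thm:Sha-elementary-A2} to it without comment) and you check that the elementary proof only uses conjugacy in $G_{\mathrm{ad}}$ over localizations and residue fields; you also verify $C_{G_{\mathrm{ad}}}(E_{\mathrm{ad}})=1$ directly from Proposition~\ref{prop:CentX0} and $h_{\alpha_1}(-1)$ (which already gives $2a=2b=0$) instead of quoting the general fact that this centralizer is the trivial center.
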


\begin{proof}
Let $\varphi:G_{\mathrm{ad}}(\mathbf A_2,R)\to G_{\mathrm{ad}}(\mathbf A_2,R)$ be a locally inner endomorphism.
By Theorem~\ref{thm:Sha-elementary-A2}, after an inner normalization $\varphi$ is the identity on the elementary subgroup
$E_{\mathrm{ad}}(\mathbf A_2,R)$, which is normal in $G_{\mathrm{ad}}(\mathbf A_2,R)$.
Take any $g\in G_{\mathrm{ad}}(\mathbf A_2,R)$ and any root element $x=x_\alpha(1)\in E_{\mathrm{ad}}(\mathbf A_2,R)$.
Then $g\,x\,g^{-1}\in E_{\mathrm{ad}}(\mathbf A_2,R)$, and hence
\[
\varphi(g x g^{-1})=g x g^{-1}=\varphi(g)\,x\,\varphi(g)^{-1}.
\]
Therefore $g^{-1}\varphi(g)$ centralizes $E_{\mathrm{ad}}(\mathbf A_2,R)$.
For Chevalley groups of rank at least $2$ the centralizer of the elementary subgroup coincides with the center,
and in the adjoint group the center is trivial.
Hence $g^{-1}\varphi(g)=1$ and $\varphi(g)=g$ for all $g$, i.e. $\varphi=\mathrm{id}$.
Undoing the normalization shows that every locally inner endomorphism of $G_{\mathrm{ad}}(\mathbf A_2,R)$ is inner.
\end{proof}

\medskip
Theorem~\ref{thm:main} is completely proved for the case $\mathbf A_2$.

\section{\texorpdfstring{$\Sha$-rigidity of the adjoint Chevalley groups of type~$\mathbf B_2$}{Sha-rigidity of the adjoint Chevalley groups of type B2}}\label{sec:B2}

\subsection{Set-up and normalization}
Let $\varphi\colon E_{\mathrm{ad}}(\mathbf B_2,R)\to E_{\mathrm{ad}}(\mathbf B_2,R)$ be \emph{locally inner}.
Throughout we assume $2\in R^\times$.
Fix simple roots $\alpha$ (long) and $\beta$ (short), so that
\[
\Phi^+=\{\alpha,\beta,\alpha+\beta,\alpha+2\beta\}.
\]
Set
\[
X_0:=x_\alpha(1)x_\beta(1).
\]
As in the case $\mathbf A_2$, composing $\varphi$ with a suitable inner automorphism we may and do assume
\[
\varphi(X_0)=X_0.
\]

\subsection{Relations used}
We only need the following standard Chevalley commutators in type $\mathbf B_2$:
\begin{align}
[x_\alpha(t),x_\beta(s)]
&=x_{\alpha+\beta}(-ts)\,x_{\alpha+2\beta}(-ts^2), \label{eq:B2-ab}\\
[x_{\alpha+\beta}(t),x_\beta(s)]
&=x_{\alpha+2\beta}(-2ts). \label{eq:B2-a+b,b}
\end{align}
In particular, inside $U^+$ the root subgroup $X_{\alpha+2\beta}$ commutes with
$X_\alpha$, $X_\beta$ and $X_{\alpha+\beta}$.

\subsection{\texorpdfstring{The special element $X_0$ and its centralizer}{The special element X0 and its centralizer}}

We first determine the centralizer $C(X_0)$ inside $U^+$.
As in $\mathbf A_2$, one checks by reducing to residue fields that any $g\in C(X_0)$ must lie in $U^+$.
Hence every element of $C(X_0)$ has a unique form
\[
g=x_\alpha(b_1)x_\beta(b_2)x_{\alpha+\beta}(b_3)x_{\alpha+2\beta}(d),
\qquad b_1,b_2,b_3,d\in R.
\]
Since $x_{\alpha+2\beta}(d)$ commutes with $X_0$, the condition $gX_0=X_0g$ reduces to
\begin{equation}\label{eq:B2-cent-start}
x_\alpha(b_1)x_\beta(b_2)x_{\alpha+\beta}(b_3)\,x_\alpha(1)x_\beta(1)
=
x_\alpha(1)x_\beta(1)\,x_\alpha(b_1)x_\beta(b_2)x_{\alpha+\beta}(b_3).
\end{equation}
Using \eqref{eq:B2-ab} and \eqref{eq:B2-a+b,b} and collecting factors in the standard $U^+$-order $(\alpha,\beta,\alpha+\beta,\alpha+2\beta)$,
one obtains an equality in $U^+$ equivalent to
\begin{equation}\label{eq:B2-cent-final}
1=
x_{\alpha+\beta}(b_1-b_2)\,
x_{\alpha+2\beta}\!\bigl(b_1+b_2^2-2b_1-2b_1b_2+2b_2+2b_3\bigr).
\end{equation}
By uniqueness of coordinates in $U^+$ we get $b_1=b_2=:b$, and then
\[
0=b-b^2+2b_3 \qquad\Longleftrightarrow\qquad b_3=\frac{b^2-b}{2}.
\]
Thus we have proved:

\begin{lem}\label{lem:B2-cent}
The centralizer of $X_0$ in $U^+$ consists precisely of the elements
\[
x_\alpha(b)x_\beta(b)x_{\alpha+\beta}\!\left(\frac{b^2-b}{2}\right)x_{\alpha+2\beta}(d),
\qquad b,d\in R.
\]
\end{lem}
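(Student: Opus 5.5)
We work entirely inside $U^+$, using the unique normal form $g=x_\alpha(b_1)x_\beta(b_2)x_{\alpha+\beta}(b_3)x_{\alpha+2\beta}(d)$ with respect to the order $(\alpha,\beta,\alpha+\beta,\alpha+2\beta)$. The plan has three steps: reduce to \eqref{eq:B2-cent-start}, collect both sides into normal form, and compare coordinates. At the end we check the converse inclusion.

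\emph{The central factor and the filtration.} Since $x_{\alpha+2\beta}(d)$ is central in $U^+$, it drops out, which leaves \eqref{eq:B2-cent-start}. To organize the computation I would use the filtration $U^+\supset U_2=X_{\alpha+\beta}X_{\alpha+2\beta}\supset U_3=X_{\alpha+2\beta}$. By \eqref{eq:B2-ab} and \eqref{eq:B2-a+b,b}:
\begin{itemize}
\item $U^+/U_2$ is abelian with coordinates $(b_1,b_2)$;
\item $U^+/U_3$ is a Heisenberg group in which $[x_\alpha(t),x_\beta(s)]\equiv x_{\alpha+\beta}(-ts)$.
\end{itemize}
Rather than collecting blindly, I would compute the commutator $[g,X_0]$ and require it to be $1$. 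Since the two sides of \eqref{eq:B2-cent-start} agree modulo $U_2$, this commutator lies in $U_2$.

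\emph{The $\alpha+\beta$-coordinate.} Modulo $U_3$ the commutator is bilinear in the $(\alpha,\beta)$-coordinates of its arguments. Its $\alpha+\beta$-coordinate is therefore $\pm(b_1\cdot 1-1\cdot b_2)=\pm(b_1-b_2)$. Uniqueness of coordinates forces $b_1=b_2=:b$.

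\emph{The $\alpha+2\beta$-coordinate.} With $b_1=b_2=b$, I would move each factor of $g$ past $x_\alpha(1)x_\beta(1)$ one at a time. Each swap costs a correction recorded by \eqref{eq:B2-ab}, \eqref{eq:B2-a+b,b} and their inverses, namely $x_{\alpha+\beta}(\mp ts)$, $x_{\alpha+2\beta}(\mp ts^2)$ and $x_{\alpha+2\beta}(\mp 2ts)$. The corrections feeding the $\alpha+2\beta$-coordinate are:
\begin{itemize}
\item the quadratic terms $-ts^2$ from swapping $x_\alpha(b)$ with $x_\beta(1)$ and $x_\alpha(1)$ with $x_\beta(b)$, which give $b$ and $b^2$ with opposite signs;
\item the terms $-2ts$ from pushing the freshly created $x_{\alpha+\beta}(\pm b)$ past $x_\beta(\cdot)$;
\item the term $-2b_3$ from swapping $x_{\alpha+\beta}(b_3)$ with $x_\beta(1)$.
\end{itemize}
Collecting these should give \eqref{eq:B2-cent-final}, specialized to $b_1=b_2=b$, in the form $0=b-b^2+2b_3$. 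Since $2\in R^\times$, this yields $b_3=(b^2-b)/2$, while $d$ remains free.

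\emph{Converse and main obstacle.} For the converse inclusion, the same collected formula shows that every element of the stated form commutes with $X_0$. As a sanity check, the family $x_\alpha(b)x_\beta(b)x_{\alpha+\beta}\bigl(\tfrac{b^2-b}{2}\bigr)$ should reproduce $X_0^{\,n}$ at $b=n\in\mathbb Z$ up to a central factor. This both confirms the sign conventions and exhibits obvious centralizing elements. The main obstacle is exactly the sign and bookkeeping in the $\alpha+2\beta$-coordinate: the second-order corrections from re-commuting the new $x_{\alpha+\beta}$ factors are easy to miss. To control them, I would first carry out the collection once symbolically for a general product $x_\alpha(t_1)x_\beta(s_1)\cdot x_\alpha(t_2)x_\beta(s_2)$, and then specialize.
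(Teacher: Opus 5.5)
Your proposal is correct and follows essentially the same route as the paper: drop the central factor $x_{\alpha+2\beta}(d)$, collect both sides of \eqref{eq:B2-cent-start} in normal form using \eqref{eq:B2-ab} and \eqref{eq:B2-a+b,b}, then read off $b_1=b_2$ from the $\alpha+\beta$-coordinate and $b-b^2+2b_3=0$ from the $\alpha+2\beta$-coordinate; the filtration and bilinearity framing is only bookkeeping. Carrying out the collection with $b_1=b_2=b$ gives $\alpha+2\beta$-coordinates $d-b^2-2b-2b_3$ for $gX_0$ and $d-b-2b^2$ for $X_0g$, which confirms $b_3=(b^2-b)/2$ and the converse inclusion.
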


\subsection{\texorpdfstring{The image of $x_{\alpha+2\beta}(1)$}{The image of xalpha+2beta(1)}}

Set
\[
X_4:=\varphi(x_{\alpha+2\beta}(1)).
\]
Since $x_{\alpha+2\beta}(1)$ commutes with $X_0$, also $X_4\in C(X_0)$.
By Lemma~\ref{lem:B2-cent} we may write
\begin{equation}\label{eq:B2-X4-general}
X_4=
x_\alpha(a)\,x_\beta(a)\,x_{\alpha+\beta}\!\left(\frac{a^2-a}{2}\right)\,x_{\alpha+2\beta}(b),
\qquad a,b\in R.
\end{equation}
We now show that necessarily $a=0$.
Fix a maximal ideal $J$ and work in the local ring $R_J$.
Reducing modulo $J$, the element $\overline X_4$ is conjugate in $G_{\mathrm{ad}}(\mathbf B_2,k_J)$
to $x_{\alpha+2\beta}(1)$, hence its $\alpha$- and $\beta$-coordinates vanish; in particular
$\overline a=0$, i.e. $a\in\Rad R_J$.
Let $c\in R_J^\times$ be such that $X_4$ is conjugate in $G_{\mathrm{ad}}(\mathbf B_2,R_J)$ to $x_{\alpha+2\beta}(c)$.
As in the $\mathbf A_2$ case, we may take a conjugating element in short Gauss form
\[
g=t\cdot x_{-\beta}(u)\,x_{-\alpha-\beta}(v)\,x_{-\alpha-2\beta}(w),
\qquad u,v,w\in\Rad R_J.
\]
Writing the conjugacy equation $X_4\,g=g\,x_{\alpha+2\beta}(c)$ and commuting
$x_{-\alpha-2\beta}(w)$ past $x_{\alpha+2\beta}(c)$ produces the torus factor
$h_{\alpha+2\beta}(1+wc)$.
Comparison of torus parts yields
\begin{equation}\label{eq:B2-torus-compare}
h_{\alpha+2\beta}(1+wc)=h_{\beta}\bigl(1+uvc(1+wc)\bigr).
\end{equation}
Since $h_{\alpha+2\beta}(\cdot)$ and $h_\beta(\cdot)$ are independent in the adjoint torus and $c\in R_J^\times$,
\eqref{eq:B2-torus-compare} implies $w=0$ and then $uv=0$.
With $w=0$ and $uv=0$, the conjugacy equation simplifies to an equality in $U^+U^-$.
A short calculation using \eqref{eq:B2-ab} gives the relations
\[
a=vc=u^2c,\qquad 2uc=a(a-1).
\]
Since $uv=0$ and $u,v\in\Rad R_J$, we get $a^2=0$ and hence $u^2=0$, so the above relations give $a=0$.
Thus $X_4=x_{\alpha+2\beta}(b)$ with $b\in R_J^\times$.
Since $J$ was arbitrary, we conclude globally:

\begin{lem}\label{lem:B2-X4}
One has
\[
X_4=\varphi(x_{\alpha+2\beta}(1))=x_{\alpha+2\beta}(b)
\quad\text{for some }b\in R^\times.
\]
\end{lem}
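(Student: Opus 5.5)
The argument sketched just before the statement already carries the proof. What remains is to organize it into three stages: local determination of $X_4$, the step $a=0$, and patching.

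\emph{Local reduction.} By Lemma~\ref{lem:B2-cent}, $X_4$ has the global form \eqref{eq:B2-X4-general} with $a,b\in R$. The coordinates in $U^+$ are unique, and $R$ embeds into $\prod_J R_J$ (Proposition~\ref{inlocal}). It therefore suffices to prove two facts in each localization $R_J$: that $a$ maps to $0$, and that $b$ maps to a unit.

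\emph{Step 1: $a\in\Rad R_J$ and $b$ is a unit.} Reduce modulo $J$. The image $\overline X_4$ is conjugate in $G_{\mathrm{ad}}(\mathbf B_2,k_J)$ to a long root element $x_{\alpha+2\beta}(1)$. Over a field I would argue as in the $\mathbf A_2$ field step. Write the conjugating element in Bruhat form and note that $X_{\alpha+2\beta}$ is central in $U^+$. The equation $g\,x_{\alpha+2\beta}(1)\,g^{-1}=\overline X_4\in U^+$ then forces $\mathbf w$ to fix $\alpha+2\beta$ up to staying in $U^+$. Root elements conjugate to $x_{\alpha+2\beta}(1)$ lying in $U^+$ have vanishing simple-root coordinates, so $\overline a=0$. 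Alternatively, one can use the adjoint-representation criterion that a long root element $g$ satisfies $(\Ad g-1)^3=0$ and compare it with $x_\alpha(\overline a)x_\beta(\overline a)\cdots$ for $\overline a\neq0$, which is regular unipotent. This shows $\overline a=0$. Then $\overline X_4=x_{\alpha+2\beta}(\overline b)$ is nontrivial, so $\overline b\neq0$ and $b\in R_J^\times$.

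\emph{Step 2: $a=0$ in $R_J$.} Since $\overline X_4=x_{\alpha+2\beta}(\overline b)$, any conjugating element reduces modulo $J$ into the normalizer of $X_{\alpha+2\beta}$. After absorbing the parts that stabilize $X_{\alpha+2\beta}$ (the torus and $U^+$, which rescale to some $x_{\alpha+2\beta}(c)$), we may write the conjugating element in short Gauss form $g=t\,x_{-\beta}(u)x_{-\alpha-\beta}(v)x_{-\alpha-2\beta}(w)$ with $u,v,w\in\Rad R_J$. Expand $X_4g=g\,x_{\alpha+2\beta}(c)$ using the rank-one identity \eqref{eq:rankone} and the $\mathbf B_2$ commutator formulas \eqref{eq:B2-ab}, \eqref{eq:B2-a+b,b}. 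Then compare the $T$-, $U^+$- and $U^-$-parts by uniqueness of the Gauss decomposition. The torus comparison \eqref{eq:B2-torus-compare} gives $w=0$ and $uv=0$, since $h_{\alpha+2\beta}$ and $h_\beta$ are independent in the adjoint torus. The $U^+$ comparison gives $a=vc=u^2c$ and $2uc=a(a-1)$. From these,
\[
a^2=(vc)(u^2c)=uv\cdot uc^2=0 .
\]
Then $2uc=-a$, so $u=-a/(2c)$. Hence $u^2=a^2/(4c^2)=0$, and $a=u^2c=0$.

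\emph{Globalization.} Every localization gives $a_J=0$, so $a=0$ in $R$. Every localization gives $b_J\in R_J^\times$, so $b\in R^\times$. This proves the lemma.

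The main obstacle is Step 2. The care is needed in justifying that the conjugating element can be taken in the short Gauss form with only those three negative root factors. One must show that the $x_{-\alpha}$-component and the upper-unipotent component can be absorbed, and then carry out the bookkeeping of the $\mathbf B_2$ commutators in $U^+U^-$ correctly. I would handle the absorption by first using the Gauss decomposition $g=t\,u_1\,v\,u_2$ and noting that $u_2$ normalizes $X_{\alpha+2\beta}$, so it can be moved to rescale $c$. After that, only $X_{-\alpha}$ remains to be excluded; this follows because it would produce an $x_{\beta}$-type correction that is incompatible with the form \eqref{eq:B2-X4-general}.
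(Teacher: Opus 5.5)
Your outline is the paper's proof step for step. It has the same ingredients: localization, $\overline a=0$, the short Gauss form $g=t\,x_{-\beta}(u)x_{-\alpha-\beta}(v)x_{-\alpha-2\beta}(w)$, the torus comparison giving $w=0$ and $uv=0$, the $U^+$ relations $a=vc=u^2c$ and $2uc=a(a-1)$, and patching over all $J$. Those relations are right; a direct computation in $\mathrm{Sp}_4$ confirms them up to the sign normalization of $x_{-\alpha-\beta}$. Your derivation $a^2=uv\cdot uc^2=0$, $u=-a/(2c)$, $u^2=0$, $a=u^2c=0$ is correct and more explicit than the paper's. One side remark in Step~1 is false: long root elements of $U^+$ can have nonzero simple-root coordinates ($x_\alpha(1)$ is one). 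Keep only your regular-unipotent argument, which works because the $\alpha$- and $\beta$-coordinates of $X_4$ coincide.

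The genuine gap is in how you propose to reach the short Gauss form. This is the step you single out yourself, and the paper only asserts it ``as in the $\mathbf A_2$ case''. Your sketch fails in three places.
\begin{itemize}
\item In $g=t\,u_1\,v\,u_2$, discarding $u_2$ is fine, since it centralizes $x_{\alpha+2\beta}(c)$ ($\alpha+2\beta$ is the highest root). But this still leaves $t\,u_1$ to the left of $v$, which you never treat.
\item Nothing forces the Gauss factor $v$ to have radical parameters.
\item $X_{-\alpha}$ cannot be excluded at all. Since $2\beta\notin\Phi$, $x_{-\alpha}(s)$ commutes with $x_{\alpha+2\beta}(c)$, so $g\,x_{-\alpha}(s)$ is again a conjugating element; it must be absorbed, not ruled out. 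Moreover, an ``$x_\beta$-type correction'' would not contradict \eqref{eq:B2-X4-general}, since that form already contains $x_\beta(a)$.
\end{itemize}

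A correct route goes through the parabolic. The element $\overline g$ carries $x_{\alpha+2\beta}(\overline c)$ into $X_{\alpha+2\beta}$, so it lies in the parabolic subgroup $P(k_J)$ stabilizing the highest-root line. Its Levi part is $\langle T,X_{\pm\alpha}\rangle$ and its unipotent radical is $X_\beta X_{\alpha+\beta}X_{\alpha+2\beta}$. Lift $\overline g$ to some $p\in P(R_J)$. Then $gp^{-1}$ lies in the kernel of reduction modulo $J$. Because the big cell is open and $R_J$ is local, this kernel equals $V(J)\,T(R_J,J)\,U(J)$. Order $V(J)$ as $X_{-\beta}(J)X_{-\alpha-\beta}(J)X_{-\alpha-2\beta}(J)X_{-\alpha}(J)$. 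This gives $g\in X_{-\beta}(J)X_{-\alpha-\beta}(J)X_{-\alpha-2\beta}(J)\,P(R_J)$. Since $P(R_J)$ only rescales $x_{\alpha+2\beta}(c)$, this is exactly the short form with $u,v,w\in\Rad R_J$, and the rest of your argument then applies.
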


\subsection{\texorpdfstring{The image of $x_{\alpha+\beta}(1)$}{The image of xalpha+beta(1)}}

Put
\[
X_3:=\varphi(x_{\alpha+\beta}(1)).
\]
Then $X_3$ commutes with $X_4$, and applying $\varphi$ to the identity
$[x_{\alpha+\beta}(1),X_0]=x_{\alpha+2\beta}(-2)$ gives
\begin{equation}\label{eq:B2-comm-X3X0}
[X_3,X_0]=X_4^{-2}=x_{\alpha+2\beta}(-2b).
\end{equation}
Moreover, for every $J$ the reduction $\overline X_3$ is conjugate to $x_{\alpha+\beta}(1)$ over $k_J$.

\begin{prop}\label{prop:B2-X3}
After conjugating $\varphi$ by an element of the centralizer $C(X_0)$, we may assume
\begin{equation}\label{eq:B2-X3-form}
X_3=
x_\alpha(s)\,x_\beta(s)\,x_{\alpha+\beta}\!\left(b+\frac{s^2-s}{2}\right),
\qquad s\in\Rad R_J \text{ in each localization }R_J.
\end{equation}
\end{prop}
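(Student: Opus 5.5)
The plan follows the $\mathbf A_2$ pattern: localize, reduce to residue fields, and work in Gauss form. I will first show $X_3\in U^+$, then use the commutator relation \eqref{eq:B2-comm-X3X0} to pin down its coordinates.

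\textbf{Step 1: $X_3$ lies in $U^+$.} Since $X_3$ commutes with $X_4=x_{\alpha+2\beta}(b)$ and $b\in R^\times$ by Lemma~\ref{lem:B2-X4}, the field step goes as follows. Write $\overline X_3$ in Bruhat form. The relation $\overline X_3\,x_{\alpha+2\beta}(\overline b)\,\overline X_3^{-1}=x_{\alpha+2\beta}(\overline b)$ forces $w(\alpha+2\beta)=\alpha+2\beta$. This leaves $\mathbf w\in\{e,s_\beta\}$ up to the stabilizer of the highest short root; note that $s_\alpha$ sends $\alpha+2\beta\mapsto\alpha+2\beta$ as well.

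In the local step, take a short Gauss form $X_3=t\,u\,v$ with $v\in U^-$ having radical entries. Commuting with $x_{\alpha+2\beta}(b)$ kills the $-(\alpha+2\beta)$ coordinate, since otherwise it produces an uncancellable torus factor $h_{\alpha+2\beta}(1+\cdots)$.

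To eliminate the remaining negative and Weyl parts, I would add the information from \eqref{eq:B2-comm-X3X0}. The commutator $[X_3,X_0]$ lies in $U^+$, in fact in $X_{\alpha+2\beta}$. Modulo $J$, $\overline X_3$ is conjugate to $x_{\alpha+\beta}(1)$. Both facts together should rule out $\mathbf w\neq e$ and any nontrivial negative-root coordinates, by the same torus-comparison trick used for $X_4$. Finally, the torus part $t$ must be $1$: the torus rescales $x_\alpha(1)x_\beta(1)$ in the commutator, and the trace/conjugacy class of $X_3$ is unipotent.

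\textbf{Step 2: coordinates of $X_3$.} Write $X_3=x_\alpha(c_1)x_\beta(c_2)x_{\alpha+\beta}(c_3)x_{\alpha+2\beta}(c_4)$ and expand $[X_3,X_0]$ in $U^+$ using \eqref{eq:B2-ab} and \eqref{eq:B2-a+b,b}. This is the same computation that produced \eqref{eq:B2-cent-final}, but now the right-hand side is $x_{\alpha+2\beta}(-2b)$ instead of $1$.

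The $\alpha+\beta$ coordinate gives $c_1=c_2=:s$. The $\alpha+2\beta$ coordinate then gives $2c_3=s^2-s+2b$ (up to signs fixed by the commutator convention), i.e. $c_3=b+\frac{s^2-s}{2}$; this is where $2\in R^\times$ is used. Reducing modulo $J$, the $\alpha$- and $\beta$-coordinates of a conjugate of $x_{\alpha+\beta}(1)$ inside $U^+(k_J)$ must vanish. The argument is the same as for $X_4$: a long-root and short-root component cannot both appear for a root element. Hence $s\in\Rad R_J$.

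\textbf{Step 3: remove $x_{\alpha+2\beta}(c_4)$.} Conjugate by an element of $C(X_0)$ from Lemma~\ref{lem:B2-cent}. Conjugating by $x_\alpha(\mu)x_\beta(\mu)x_{\alpha+\beta}(\frac{\mu^2-\mu}{2})$ fixes $X_0$ and $X_4$, which is central in $U^+$. On $X_3$, modulo higher terms, it changes the $\alpha+2\beta$ coordinate by an amount of the form $2\mu(c_3-\text{stuff})$, essentially $2b\mu+(\text{radical})\mu$. That coefficient is a unit in each $R_J$, and in fact globally, because $b\in R^\times$ and $s$ lies in the Jacobson radical. So one can solve for $\mu$ to kill $c_4$, giving \eqref{eq:B2-X3-form}.

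\textbf{Main obstacle.} I expect Step 1 to be the hardest: ruling out the Weyl element $s_\alpha$ and the radical negative coordinates $u_{-\beta},u_{-\alpha-\beta}$. I would first try the torus-comparison argument, combining commutation with $X_4$ and the commutator identity with $X_0$, exactly as in \eqref{eq:B2-torus-compare}.
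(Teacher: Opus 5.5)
Your outline follows the same skeleton as the paper's proof: a residue-field Bruhat step, a short Gauss lift, comparison of coordinates in \eqref{eq:B2-comm-X3X0}, and finally killing the $\alpha+2\beta$-coordinate inside $C(X_0)$. Steps~2 and~3 are correct once $X_3\in U^+$ is known. Comparing the $\alpha+\beta$- and $\alpha+2\beta$-coordinates of $X_3X_0=x_{\alpha+2\beta}(-2b)X_0X_3$ gives exactly $c_1=c_2=s$ and $c_3=b+\frac{s^2-s}{2}$.

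The gap is Step~1, which you leave at ``should rule out''. You are right that commuting with $X_4$ cannot do it. The stabilizer in $W$ of the long highest root $\alpha+2\beta$ is $\{e,s_\alpha\}$, not $\{e,s_\beta\}$, since $s_\beta(\alpha+2\beta)=\alpha$. Moreover the whole subgroup $\langle X_\alpha,X_{-\alpha}\rangle$, for instance $w_\alpha(1)$, centralizes $X_{\alpha+2\beta}$. The paper's one-line claim that commuting with $\overline X_4$ kills the Weyl part has the same hole. For the same reason, the torus comparison against $X_4$ produces no torus factor in the $\pm\alpha$ directions. So it cannot reach $s_\alpha$ or the coordinate $u_{-\alpha}$, which is also missing from your list of obstacles. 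Some argument using $X_0$ is needed, and you have not supplied one.

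The missing idea is the regularity of $X_0$, i.e. the inclusion $C(X_0)\subseteq U^+$ asserted just before Lemma~\ref{lem:B2-cent}. By \eqref{eq:B2-a+b,b}, $[x_{\alpha+\beta}(b),X_0]=x_{\alpha+2\beta}(-2b)$, which is precisely the right-hand side of \eqref{eq:B2-comm-X3X0}. Hence $X_3X_0X_3^{-1}=x_{\alpha+\beta}(b)X_0x_{\alpha+\beta}(b)^{-1}$, so $x_{\alpha+\beta}(b)^{-1}X_3\in C(X_0)$. Lemma~\ref{lem:B2-cent} and \eqref{eq:B2-a+b,b} then give, over $R$ itself,
\[
X_3=x_{\alpha+\beta}(b)\,x_\alpha(s)x_\beta(s)x_{\alpha+\beta}\bigl(\tfrac{s^2-s}{2}\bigr)x_{\alpha+2\beta}(d)
=x_\alpha(s)x_\beta(s)x_{\alpha+\beta}\bigl(b+\tfrac{s^2-s}{2}\bigr)x_{\alpha+2\beta}(d-2bs).
\]
This does your Steps~1 and~2 at once, with no Weyl, torus or $U^-$ part to analyse. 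Over a field it is just the fact that a regular unipotent element lies in a unique Borel subgroup.

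The condition $\overline s=0$ holds because otherwise $\overline X_3$ would itself be regular unipotent, hence not conjugate to $x_{\alpha+\beta}(1)$.

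Step~3 also becomes exact. The group $C_{U^+}(X_0)$ is abelian, as a direct check from Lemma~\ref{lem:B2-cent} shows. So conjugating by $g_\mu=x_\alpha(\mu)x_\beta(\mu)x_{\alpha+\beta}(\frac{\mu^2-\mu}{2})$ only replaces $x_{\alpha+\beta}(b)$ by $x_\beta(\mu)x_{\alpha+\beta}(b)x_\beta(-\mu)=x_{\alpha+\beta}(b)\,x_{\alpha+2\beta}(2b\mu)$. The shift of the top coordinate is exactly $2b\mu$, with no higher terms, and $\mu=s-\frac{d}{2b}$ works globally.
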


\begin{proof}
Over $k_J$, commuting with $\overline X_4=x_{\alpha+2\beta}(\overline b)$ forces the Weyl part in the Bruhat decomposition of $\overline X_3$
to be trivial.
Lifting to $R_J$ and using the same short Gauss comparison as in the case $\mathbf A_2$, all $U^-$-parameters vanish and the torus part must be trivial.
We are left with
\[
X_3=x_\alpha(b_1)x_\beta(b_2)x_{\alpha+\beta}(b_3)x_{\alpha+2\beta}(b_4),
\qquad b_1,b_2\in\Rad R_J.
\]
Substituting into \eqref{eq:B2-comm-X3X0} and comparing $U^+$-coordinates yields $b_2=b_1$ and
$b_3=b+\frac{b_1^2-b_1}{2}$, while $b_4$ can be killed by conjugating with a suitable element of $C(X_0)$.
\end{proof}

\subsection{\texorpdfstring{The images of $x_\alpha(1)$ and $x_\beta(1)$}{The images of xalpha(1) and xbeta(1)}}

Let $X_1:=\varphi(x_\alpha(1))$ and $X_2:=\varphi(x_\beta(1))$.
Since $\varphi(X_0)=X_0$, we have
\begin{equation}\label{eq:B2-X0-factor}
X_0=X_1X_2.
\end{equation}
Applying $\varphi$ to $[x_\alpha(1),X_0]=x_{\alpha+\beta}(1)\,x_{\alpha+2\beta}(1)$ gives
\begin{equation}\label{eq:B2-comm-X1X0}
[X_1,X_0]=X_3X_4.
\end{equation}
Finally, $X_1$ commutes with $X_3$ and $X_4$.
Reducing modulo $J$, the commutation with $\overline X_4$ and $\overline X_3$ forces
\[
\overline X_1=x_\alpha(\lambda)\,x_{\alpha+2\beta}(\mu).
\]
Substituting into \eqref{eq:B2-comm-X1X0} and comparing the $X_\alpha$-parameter gives $\lambda=\overline b$.
Then from \eqref{eq:B2-X0-factor} we compute
\[
\overline X_2=\overline X_1^{-1}\,\overline X_0
=x_\alpha(1-\overline b)\,x_\beta(1)\,x_{\alpha+2\beta}(\ast).
\]
Since $\overline X_2$ must be conjugate to $x_\beta(1)$, the extra $x_\alpha(1-\overline b)$ factor must be trivial; hence $\overline b=1$.
Thus $\overline X_4=x_{\alpha+2\beta}(1)$ and $\overline X_3=x_{\alpha+\beta}(1)$.
Over a local ring, the same short Gauss argument as in the $\mathbf A_2$ case shows that $X_1$ has no $U^-$ part and no torus part.
Hence $X_1\in U^+$ and, from the commutation with $X_3$ and $X_4$, necessarily
\[
X_1=x_\alpha(1)\,x_{\alpha+2\beta}(c)
\quad\text{for some }c\in R_J.
\]
Then \eqref{eq:B2-X0-factor} gives
\[
X_2=X_1^{-1}X_0=x_\beta(1)\,x_{\alpha+2\beta}(-c).
\]
Summarizing, after our normalizations we have in every localization:
\begin{align}
X_1&=x_\alpha(1)\,x_{\alpha+2\beta}(c),\label{eq:B2-X1X2}\\
X_2&=x_\beta(1)\,x_{\alpha+2\beta}(-c),\nonumber\\
X_3&=x_{\alpha+\beta}(1),\qquad
X_4=x_{\alpha+2\beta}(1).\nonumber
\end{align}

\subsection{The image of the diagonal involution}

In type $\mathbf B_2$ the only nontrivial diagonal involution in the adjoint group is $h_\alpha(-1)$.
Let
\[
H_1:=\varphi(h_\alpha(-1)).
\]
Then $H_1$ commutes with $X_1$ and $X_4$ and inverts $X_2$ and $X_3$.
Over $k_J$, commuting with $x_{\alpha+2\beta}(1)$ forces the Weyl part in the Bruhat decomposition of $\overline H_1$
to be trivial, and commuting with $x_\alpha(1)$ forces $\overline H_1$ into the Borel subgroup with no $x_\beta(\cdot)$ factor.
The inversion of $\overline X_3=x_{\alpha+\beta}(1)$ forces the torus part to be $h_\alpha(-1)$, and the condition $\overline H_1^2=1$ kills the $x_{\alpha+2\beta}$-coordinate.
Thus
\[
\overline H_1=h_\alpha(-1)\,x_{\alpha+\beta}(\overline c).
\]
Lifting to $R_J$ via the same Gauss comparison as in the case $\mathbf A_2$, we obtain the identical statement over $R_J$:
\begin{equation}\label{eq:B2-H1}
H_1=h_\alpha(-1)\,x_{\alpha+\beta}(c),
\end{equation}
with the same parameter $c$ as in \eqref{eq:B2-X1X2}.

\subsection{\texorpdfstring{The image of $w_\beta(1)$ and elimination of $c$}{The image of wbeta(1) and elimination of c}}

Let $W_2:=\varphi(w_\beta(1))$.
We use the standard relations
\[
W_2X_3=X_3^{-1}W_2,\qquad W_2X_1=X_4W_2,\qquad [W_2,H_1]=1,\qquad W_2^2=1.
\]
From $W_2X_1=X_4W_2$ with $\overline X_1=x_\alpha(1)x_{\alpha+2\beta}(\overline c)$ and $\overline X_4=x_{\alpha+2\beta}(1)$,
the Weyl part of $\overline W_2$ must send $\alpha$ to $\alpha+2\beta$, hence it is $w_\beta$.
Then $W_2X_3=X_3^{-1}W_2$ forces the torus part to be trivial and kills the $x_\beta(\cdot)$ coordinate.
The commutation with $\overline H_1=h_\alpha(-1)x_{\alpha+\beta}(\overline c)$ then forces $\overline c=0$.
Thus $\overline X_1=x_\alpha(1)$, $\overline X_2=x_\beta(1)$, and $\overline H_1=h_\alpha(-1)$.
Over a local ring, writing $W_2$ in short Gauss form with Weyl part $w_\beta(1)$ and radical parameters,
one shows in the same way that all extra parameters vanish and that necessarily $c=0$.
Hence
\[
W_2=w_\beta(1).
\]
Arguing exactly as above, one likewise gets $\varphi(w_\alpha(1))=w_\alpha(1)$.
Therefore, after all normalizations we have
\begin{align*}
\varphi(x_\alpha(1))&=x_\alpha(1),&
\varphi(x_\beta(1))&=x_\beta(1),\\
\varphi(x_{\alpha+\beta}(1))&=x_{\alpha+\beta}(1),&
\varphi(x_{\alpha+2\beta}(1))&=x_{\alpha+2\beta}(1),
\end{align*}
and also $\varphi(h_\alpha(-1))=h_\alpha(-1)$.

\subsection{\texorpdfstring{Finalization of $\mathbf B_2$}{Finalization of B2}}

Since $\varphi$ fixes $w_\alpha(1)$ and $w_\beta(1)$ and hence normalizes the torus action,
it preserves each root subgroup.
Thus for every root $\gamma$ there is a ring endomorphism $\rho_\gamma\colon R\to R$ such that
\[
\varphi\bigl(x_\gamma(r)\bigr)=x_\gamma(\rho_\gamma(r)).
\]
Now use local innerness.
For a long root $\gamma$ in type $\mathbf B_2$ one has
\[
\tr\bigl(x_\gamma(t)x_{-\gamma}(s)\bigr)=s^2t^2-6st+10.
\]
Hence
\[
s^2t^2-6st=\rho_\gamma(s)^2\rho_\gamma(t)^2-6\rho_\gamma(s)\rho_\gamma(t).
\]
As in the case $\mathbf A_2$, applying the symmetric difference trick in the variable $t$ gives
\[
\rho_\gamma(t)=t\qquad\forall\,t\in R.
\]
Thus $\varphi$ is the identity on all long root subgroups.
Finally, the commutator relation \eqref{eq:B2-ab} with $t=1$ shows that the short-root maps must also be the identity:
since
\[
[x_\alpha(1),x_\beta(s)]=x_{\alpha+\beta}(-s)x_{\alpha+2\beta}(-s^2),
\]
applying $\varphi$ and using that $\rho_\alpha=\rho_{\alpha+2\beta}=\mathrm{id}$ yields
\[
x_{\alpha+2\beta}\bigl(-\rho_\beta(s)^2\bigr)=x_{\alpha+2\beta}(-s^2),
\]
hence $\rho_\beta(s)^2=s^2$ for all $s$.
Because $\rho_\beta(1)=1$ and $\rho_\beta$ is additive, we get $\rho_\beta=\mathrm{id}$,
and then also $\rho_{\alpha+\beta}=\mathrm{id}$.
Therefore $\varphi$ is the identity on all root subgroups and hence on $E_{\mathrm{ad}}(\mathbf B_2,R)$.
Undoing the initial normalization, we conclude that every locally inner endomorphism of
$E_{\mathrm{ad}}(\mathbf B_2,R)$ is inner.

\begin{thm}\label{thm:B2-elementary}
Let $R$ be a commutative ring with $1/2\in R$.
Then every locally inner endomorphism of $E_{\mathrm{ad}}(\mathbf B_2,R)$ is inner.
Equivalently, $E_{\mathrm{ad}}(\mathbf B_2,R)$ is $\Sha$-rigid.
\end{thm}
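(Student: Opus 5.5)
The proof assembles the normalizations of Section~5 into a single argument, in the same way as Theorems~\ref{thm:A1} and~\ref{thm:Sha-elementary-A2}. Let $\varphi$ be a locally inner endomorphism of $E_{\mathrm{ad}}(\mathbf B_2,R)$.

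First, we compose $\varphi$ with an inner automorphism so that $\varphi(X_0)=X_0$, where $X_0=x_\alpha(1)x_\beta(1)$. This is possible because $\varphi(X_0)$ is conjugate to $X_0$ in $E_{\mathrm{ad}}(\mathbf B_2,R)$. We then invoke, in order:
\begin{itemize}
\item Lemma~\ref{lem:B2-cent}, to describe $C(X_0)$;
\item Lemma~\ref{lem:B2-X4}, giving $\varphi(x_{\alpha+2\beta}(1))=x_{\alpha+2\beta}(b)$ with $b\in R^\times$;
\item Proposition~\ref{prop:B2-X3}, which after a further conjugation by an element of $C(X_0)$ puts $X_3$ in the normal form \eqref{eq:B2-X3-form}.
\end{itemize}
Every further conjugation used is by an element of $E_{\mathrm{ad}}(\mathbf B_2,R)$ (elements of $U^+$ or of $C(X_0)\subseteq U^+$), so the normalized map differs from the original by an inner automorphism of the elementary group itself.

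Second, we run the local analysis in each $R_J$. This yields \eqref{eq:B2-X1X2} for $X_1,\dots,X_4$ and \eqref{eq:B2-H1} for $H_1$. The relations for $W_2=\varphi(w_\beta(1))$ then force $c=0$ in every localization. By Proposition~\ref{inlocal}, an element of $R$ vanishing in all $R_J$ vanishes, so $c=0$ globally. The same reasoning gives $b=1$ globally, since $\overline b=1$ and the relation $[X_1,X_0]=X_3X_4$ pins down $b$ exactly once $X_1=x_\alpha(1)$. Hence $\varphi$ fixes $x_\gamma(1)$ for all $\gamma\in\Phi^+$, together with $w_\alpha(1)$, $w_\beta(1)$ and $h_\alpha(-1)$. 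Because $\varphi$ commutes with the Weyl elements, it also fixes $x_{-\gamma}(1)$.

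Third, we extend from $x_\gamma(1)$ to all of $x_\gamma(t)$. Each $X_\gamma$ is recovered from $x_\gamma(1)$ and the fixed Weyl and torus elements, exactly as in Subsection~\ref{subsec:images-xalpha}, which gives ring endomorphisms $\rho_\gamma$ with $\varphi(x_\gamma(r))=x_\gamma(\rho_\gamma(r))$. Conjugation by the fixed $w$'s identifies $\rho_\gamma$ across a Weyl orbit, so only $\rho_\alpha$ and $\rho_\beta$ need to be controlled.

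\textbf{Long roots.} We apply trace invariance \eqref{eq:traceA2B2} to $x_\gamma(t)x_{-\gamma}(1)$ and use the difference trick $\Delta(t)=4t+2$. This gives $4(\rho_\gamma(t)-t)=0$, hence $\rho_\gamma=\mathrm{id}$ since $2\in R^\times$.

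\textbf{Short roots.} From \eqref{eq:B2-ab} we obtain $\rho_\beta(s)^2=s^2$. Replacing $s$ by $s+1$ and using additivity with $\rho_\beta(1)=1$ gives $2\rho_\beta(s)=2s$, so $\rho_\beta=\mathrm{id}$. Then $\rho_{\alpha+\beta}=\mathrm{id}$ follows from the same commutator. Since $E_{\mathrm{ad}}(\mathbf B_2,R)$ is generated by root elements, $\varphi=\mathrm{id}$, and undoing the normalizations shows the original endomorphism is inner.

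The main obstacle is the local-to-global step in the middle. The local computations give normal forms and parameters only in each $R_J$, and one must ensure that the normalizing conjugations, such as the one killing $b_4$ in Proposition~\ref{prop:B2-X3}, are performed by a single global element rather than chosen separately in each localization. I would handle this as in the global step of Proposition~\ref{prop:CentX0}. The relevant parameter equations, for instance $b_1=b_2$ and $b_4$ determined by $2b_4=\cdots$, are linear over $R$ with unit coefficients. Their solutions are therefore defined globally, and Proposition~\ref{inlocal} then lets us conclude that identities verified in all $R_J$ hold in $R$.
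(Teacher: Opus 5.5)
Your proposal is correct and follows essentially the paper's own argument in Section~5. You normalize $\varphi(X_0)=X_0$, use the successive local descriptions of $X_4,X_3,X_1,X_2,H_1$ and $W_2=\varphi(w_\beta(1))$ to show that all $x_\gamma(1)$ together with $w_\alpha(1),w_\beta(1)$ are fixed, then force $\rho_\gamma=\mathrm{id}$ on long roots by the trace difference trick and on short roots via $[x_\alpha(1),x_\beta(s)]$. Your additions only make explicit what the paper leaves implicit: transferring $c=0$ and $b=1$ from every $R_J$ to $R$, checking that the normalizing conjugations can be chosen globally, and the substitution $s\mapsto s+1$ in $\rho_\beta(s)^2=s^2$.
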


\begin{thm}\label{thm:B2-adjoint}
Let $R$ be a commutative ring with $1/2\in R$.
Then every locally inner endomorphism of $G_{\mathrm{ad}}(\mathbf B_2,R)$ is inner.
Equivalently, $G_{\mathrm{ad}}(\mathbf B_2,R)$ is $\Sha$-rigid.
\end{thm}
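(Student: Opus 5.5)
The plan is to copy the argument used for Theorem~\ref{thm:A2-adjoint}: reduce to the elementary subgroup, then use that its centralizer is trivial. Let $\varphi\colon G_{\mathrm{ad}}(\mathbf B_2,R)\to G_{\mathrm{ad}}(\mathbf B_2,R)$ be locally inner.

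The first issue is that $\varphi$ must map $E=E_{\mathrm{ad}}(\mathbf B_2,R)$ into itself before Theorem~\ref{thm:B2-elementary} can be applied to the restriction. I would argue as follows. Since $\mathbf B_2$ has rank $2$, the Suslin--Kopeiko--Taddei theorem recalled in Section~\ref{sec:Chev} gives $E\trianglelefteq G_{\mathrm{ad}}(\mathbf B_2,R)$. For $x\in E$ the image $\varphi(x)$ equals $gxg^{-1}$ for some $g\in G_{\mathrm{ad}}$, and this lies in $E$ by normality. Hence $\varphi|_E$ is an endomorphism of $E$.

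A second subtlety is that $\varphi|_E$ is only known to be locally inner with conjugating elements taken from the full group $G_{\mathrm{ad}}$, not from $E$. I would deal with this by noting that the proof of Theorem~\ref{thm:B2-elementary} uses conjugacy only in two ways. It uses it through reductions to residue fields and localizations, where conjugacy in $G_{\mathrm{ad}}(\mathbf B_2,R_J)$ or $G_{\mathrm{ad}}(\mathbf B_2,k_J)$ is exactly what is invoked. It also uses it through trace identities, which are invariant under conjugation by any element of $G_{\mathrm{ad}}$. The only normalization step is $\varphi(X_0)=X_0$. There the conjugating element may lie in $G_{\mathrm{ad}}$, so we normalize by an element of $G_{\mathrm{ad}}$; this is harmless, because we only need $\varphi$ to be inner on $G_{\mathrm{ad}}$. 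Running that proof therefore gives $y\in G_{\mathrm{ad}}(\mathbf B_2,R)$ with $i_y\circ\varphi$ equal to the identity on $E$. I expect this verification to be the main point needing care. Everything else is formal.

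Next, replace $\varphi$ by $i_y\circ\varphi$, so that $\varphi|_E=\mathrm{id}$. Take $g\in G_{\mathrm{ad}}$ and $x\in E$. By normality $gxg^{-1}\in E$, so
\[
gxg^{-1}=\varphi(gxg^{-1})=\varphi(g)\,x\,\varphi(g)^{-1}.
\]
Thus $g^{-1}\varphi(g)$ centralizes $E$. For rank $\ge2$ the centralizer of $E$ in $G$ is the center; in the adjoint group this is trivial. One can check this locally: in each $G_{\mathrm{ad}}(\mathbf B_2,R_J)$, an element commuting with all root subgroups lies in $T$ by the Gauss-decomposition argument, and then acts trivially on all roots, so it is $1$ in the adjoint torus. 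Hence $\varphi(g)=g$ for all $g$, and the original $\varphi$ equals $i_y^{-1}$, which is inner. This proves the statement and, with it, $\Sha$-rigidity of $G_{\mathrm{ad}}(\mathbf B_2,R)$.
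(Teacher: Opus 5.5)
Your proposal is correct and follows the same route as the paper. Both first normalize $\varphi$ to be the identity on $E_{\mathrm{ad}}(\mathbf B_2,R)$ via Theorem~\ref{thm:B2-elementary}, then use normality of $E_{\mathrm{ad}}(\mathbf B_2,R)$ to see that $g^{-1}\varphi(g)$ centralizes it, and conclude because this centralizer is trivial in the adjoint group. Your two preliminary checks make explicit points that the paper's short proof leaves implicit: that $\varphi(E)\subseteq E$ follows from normality, and that the proof of Theorem~\ref{thm:B2-elementary} uses only conjugacy in $G_{\mathrm{ad}}$ over localizations and residue fields together with trace invariance, so $G_{\mathrm{ad}}$-local innerness suffices and the normalizing element may be taken in $G_{\mathrm{ad}}$.
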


\begin{proof}
By Theorem~\ref{thm:B2-elementary}, after an inner normalization a locally inner endomorphism of $G_{\mathrm{ad}}(\mathbf B_2,R)$
acts trivially on $E_{\mathrm{ad}}(\mathbf B_2,R)$.
Since $\mathbf B_2$ has rank $2$, the subgroup $E_{\mathrm{ad}}(\mathbf B_2,R)$ is normal in $G_{\mathrm{ad}}(\mathbf B_2,R)$,
and the same argument as in the proof of Theorem~\ref{thm:A2-adjoint} shows that the endomorphism is trivial after normalization.
Undoing the normalization yields the claim.
\end{proof}

\medskip
This completes the proof of Theorem~\ref{thm:main} for the case $\mathbf B_2$.

\section{\texorpdfstring{$\Sha$-rigidity of the adjoint Chevalley groups of type~$\mathbf G_2$}{Sha-rigidity of the adjoint Chevalley groups of type G2}}\label{sec:G2}

Throughout this section $R$ is a commutative ring with $2,3\in R^\times$.
For brevity we write
\[
E(\mathbf G_2)=E_{\mathrm{ad}}(\mathbf G_2,R),\qquad G(\mathbf G_2)=G_{\mathrm{ad}}(\mathbf G_2,R).
\]
Fix the root system $\Phi$ of type $\mathbf G_2$ with simple roots
$\alpha$ (long) and $\beta$ (short).
The set of positive roots is
\[
\Phi^+=\{\alpha,\beta,\alpha+\beta,\alpha+2\beta,\alpha+3\beta,2\alpha+3\beta\}.
\]

\subsection{Commutators}
For each root $\gamma\in\Phi$ and each parameter $t\in R$ let
$x_\gamma(t)$ denote the corresponding root element.
We use the normalization in which the commutator relations for the simple roots take the form
\begin{align}
[x_\alpha(t),x_\beta(u)]
&= x_{\alpha+\beta}(tu)\,
   x_{\alpha+3\beta}(-tu^{3})\,
   x_{\alpha+2\beta}(-tu^{2})\,
   x_{2\alpha+3\beta}(t^{2}u^{3}), \label{eq:G2ab}\\
[x_{\alpha+\beta}(t),x_\beta(u)]
&= x_{\alpha+2\beta}(2tu)\,
   x_{\alpha+3\beta}(3tu^{2})\,
   x_{2\alpha+3\beta}(3t^{2}u), \label{eq:G2ab2}\\
[x_\alpha(t),x_{\alpha+3\beta}(u)]
&= x_{2\alpha+3\beta}(tu), \label{eq:G2a-a3b}\\
[x_{\alpha+2\beta}(t),x_\beta(u)]
&= x_{\alpha+3\beta}(-3tu), \label{eq:G2a2b-b}\\
[x_{\alpha+\beta}(t),x_{\alpha+2\beta}(u)]
&= x_{2\alpha+3\beta}(3tu). \label{eq:G2ab-a2b}
\end{align}
For any pair of roots $\gamma,\delta\in\Phi$ such that $\gamma+\delta$ is not a root and $\gamma\neq-\delta$,
the corresponding root subgroups commute.
All relations involving negative roots are recovered from \eqref{eq:G2ab}--\eqref{eq:G2ab-a2b} using the action of the Weyl group and rank-one calculus.

\subsection{\texorpdfstring{The centralizer of the element $X_0$}{The centralizer of the element X0}}

Let $X_0:=x_\alpha(1)x_\beta(1)$.
As in the previous sections, if $g\in G(\mathbf G_2)$ commutes with $X_0$, then every localization $g_J$ commutes with $(X_0)_J$,
and after reduction modulo the radical the image commutes with $\overline X_0$ over the residue field.
The same argument as before yields
\[
C_{G(\mathbf G_2)}(X_0)\subseteq U.
\]
Take an arbitrary element
\[
x_\alpha(b_1)x_\beta(b_2)x_{\alpha+\beta}(b_3)x_{\alpha+2\beta}(b_4)x_{\alpha+3\beta}(b_5)x_{2\alpha+3\beta}(b_6)
\]
and impose the condition that it centralizes $X_0$.
Comparing the $x_{\alpha+\beta}(\cdot)$-coordinate on both sides immediately gives $b_1=b_2=:b$.
A direct computer calculation in the adjoint representation then gives
\begin{align}
    b_3&=\frac{b-b^2}{2},\label{eq:G2-cent-b3}\\
    b_4&=-\frac{2b^3}{3}+\frac{b^2}{2}+\frac{b}{6},\label{eq:G2-cent-b4}\\
    b_5&=\frac{3b^4}{4}-\frac{b^3}{2}-\frac{b^2}{4},\label{eq:G2-cent-b5}
\end{align}
while the parameter $b_6$ is arbitrary.
Thus the centralizer of $X_0$ in $U$ is precisely the set of elements
\[
x_\alpha(b)x_\beta(b)x_{\alpha+\beta}\!\left(\frac{b-b^2}{2}\right)
x_{\alpha+2\beta}\!\left(-\frac{2b^3}{3}+\frac{b^2}{2}+\frac{b}{6}\right)
x_{\alpha+3\beta}\!\left(\frac{3b^4}{4}-\frac{b^3}{2}-\frac{b^2}{4}\right)x_{2\alpha+3\beta}(d)
\]
with $b,d\in R$.

\subsection{\texorpdfstring{The image of $x_{2\alpha+3\beta}(1)$}{The image of x2alpha+3beta(1)}}

Let $X_6:=\varphi(x_{2\alpha+3\beta}(1))$.
Since $x_{2\alpha+3\beta}(1)$ commutes with $X_0$, also $X_6$ lies in the centralizer of $X_0$.
Therefore over every localization we may write
\[
X_6=x_\alpha(b)x_\beta(b)x_{\alpha+\beta}\left( \frac{b-b^2}{2} \right)
x_{\alpha+2\beta}\left( -\frac{2b^3}{3}+\frac{b^2}{2}+\frac{b}{6} \right)
x_{\alpha+3\beta}\left( \frac{3b^4}{4} - \frac{b^3}{2} - \frac{b^2}{4}\right)x_{2\alpha+3\beta}(d),
\]
where $b\in \Rad R$ and $d\in R^*$.
Since $X_6$ is conjugate to $x_{2\alpha+3\beta}(1)$, there exists a conjugating element of the form
\[
g=x_{-\alpha}(c_1)x_{-\alpha-\beta}(c_2)x_{-\alpha-2\beta}(c_3)x_{-\alpha-3\beta}(c_4)x_{-2\alpha-3\beta}(c_5)
\]
after absorbing the torus part into the parameter of the highest root element.
We consider the relation expressing that $x_{2\alpha+3\beta}(a)$ and $X_6$ are conjugate, where $a\in R^*$.
A direct computer calculation in the adjoint representation gives a sequence of constraints.
From the $(1,2)$-entry one gets $b^2c_4=0$.
Then the $(1,1)$-entry gives $ac_5=0$, hence $c_5=0$.
The $(1,6)$-entry yields $c_3^3=0$.
Using subsequently the $(1,4)$-, $(3,5)$-, $(1,3)$-, $(4,5)$-, $(3,1)$-, and $(3,2)$-entries, one obtains
\[
b=ac_2^2,\qquad c_4=c_2^2,\qquad c_3^2=-c_2^3,
\]
and finally $c_2^4=0$, so $b^2=0$.
Then the $(5,4)$-entry gives $bc_1=0$, and considering all these relations together, the $(2,14)$-entry becomes
\[
2b=0.
\]
Since $2\in R^\times$, it follows that $b=0$.
Therefore we may assume
\[
X_6=x_{2\alpha+3\beta}(a),\qquad a\in R^*.
\]

\subsection{\texorpdfstring{The image of $x_{\alpha+3\beta}(1)$}{The image of xalpha+3beta(1)}}

Now let $X_5:=\varphi(x_{\alpha+3\beta}(1))$.
Then $X_5$ commutes with $X_6$, satisfies
\[
[X_5,X_0]=X_6^{-1},
\]
and is conjugate to $x_{\alpha+3\beta}(1)$.
Over a field, commuting with $X_6=x_{2\alpha+3\beta}(a)$ leaves only two possible Bruhat forms.
The form involving a Weyl factor $w_\beta(1)x_\beta(c)$ is immediately impossible from the uniqueness of the Bruhat decomposition.
Hence over a field we may write
\[
X_5=t_1(b^3)t_2(1/b^2)x_\alpha(b_1)x_\beta(b_2)x_{\alpha+\beta}(b_3)x_{\alpha+2\beta}(b_4)x_{\alpha+3\beta}(b_5)x_{2\alpha+3\beta}(b_6).
\]
Using the commutator relation with $X_0$ and simplifying, one gets $b=1$ and $b_1=b_2$.
A direct matrix calculation then yields
\[
b_3=\frac{b_1-b_1^2}{2},\qquad
b_4=\frac{-4b_1^3+3b_1^2+b_1}{6},\qquad
b_5=a+\frac{3b_1^4-2b_1^3-b_1^2}{4}.
\]
Since $X_5$ is conjugate to $x_{\alpha+3\beta}(1)$, the conjugating element may be assumed to have the form
$g=t_1(a_1)t_2(a_2)x_\alpha(p)$.
As in the previous sections, comparison of the $x_\alpha(\cdot)$-coordinate forces $b_1=0$.
Hence over the field
\[
X_5=x_{\alpha+3\beta}(a)x_{2\alpha+3\beta}(b_6).
\]
Passing to a local ring and repeating the same short Gauss comparison, one sees that the negative-root part vanishes and that
\[
X_5=C\cdot x_{\alpha+3\beta}(a),\qquad C\in C(X_0).
\]

\subsection{\texorpdfstring{The image of $x_{\alpha+2\beta}(1)$}{The image of xalpha+2beta(1)}}

Let now $X_4=\varphi(x_{\alpha+2\beta}(1))$.
Then $X_4$ commutes with $X_5$ and $X_6$.
Over a field, commuting with $X_6$ again leaves two possible forms; the one involving a Weyl factor is incompatible with commuting with $X_5$.
Hence over a field
\[
X_4=t_1(s^3)t_2(1/s^2)x_\alpha(b_1)\cdots x_{2\alpha+3\beta}(b_6),
\]
and commuting with $X_5$ gives $b_1=0$ and $s^3=1$.
Using the relation
\[
[X_4,X_0]=X_5^{-3}X_6^{-3},
\]
one compares the coordinates and obtains first $s^2=1$, hence $s=1$.
A direct matrix calculation then yields
\[
b_2=0,\qquad b_3=0,\qquad b_4=a,\qquad b_5=3d,
\]
where $X_5=x_{\alpha+3\beta}(a)x_{2\alpha+3\beta}(d)$.
Thus over the field
\[
X_4=x_{\alpha+2\beta}(a)x_{\alpha+3\beta}(3d)x_{2\alpha+3\beta}(e).
\]
Over a local ring the same short Gauss argument removes the negative-root factor and the torus part, and one arrives at the same expression:
\begin{equation}\label{eq:G2-X4}
X_4=x_{\alpha+2\beta}(a)x_{\alpha+3\beta}(3d)x_{2\alpha+3\beta}(e).
\end{equation}

\subsection{\texorpdfstring{The image of $x_{\alpha+\beta}(1)$}{The image of xalpha+beta(1)}}

Let $X_3:=\varphi(x_{\alpha+\beta}(1))$.
Then $X_3$ commutes with $X_6$ and $X_5$, and also
\[
[X_3,X_4]=X_6^3,\qquad [X_3,X_0]=X_4^2X_5^3X_6^6.
\]
Over a field, commuting with $X_5$ and $X_6$ implies that
\[
X_3=t_2(b)x_\beta(b_2)x_{\alpha+\beta}(b_3)x_{\alpha+2\beta}(b_4)x_{\alpha+3\beta}(b_5)x_{2\alpha+3\beta}(b_6),
\qquad b^3=1.
\]
Using $[X_3,X_4]=X_6^3$ and \eqref{eq:G2-X4}, we obtain successively $b=1$, $b_2=0$, and $b_3=1$.
Thus over a field,
\[
X_3=x_{\alpha+\beta}(1)x_{\alpha+2\beta}(b_4)x_{\alpha+3\beta}(b_5)x_{2\alpha+3\beta}(b_6).
\]
The same form holds over local rings.
Now use the relation $[X_3,X_0]=X_4^2X_5^3X_6^6$.
Comparing the $x_{\alpha+2\beta}(\cdot)$-coordinate gives $a=1$ in \eqref{eq:G2-X4}.
Simplifying the remaining coordinates, we obtain
\[
b_4=-2d,\qquad b_5=3d-2e,
\]
so that
\begin{equation}\label{eq:G2-X3}
X_3=x_{\alpha+\beta}(1)x_{\alpha+2\beta}(-2d)x_{\alpha+3\beta}(3d-2e)x_{2\alpha+3\beta}(f)
\end{equation}
for some $f\in R$.

\subsection{\texorpdfstring{The images of $x_\alpha(1)$ and $x_\beta(1)$}{The images of xalpha(1) and xbeta(1)}}

Now let $X_1:=\varphi(x_\alpha(1))$ and $X_2:=\varphi(x_\beta(1))$.
We know that $X_1$ commutes with $X_3$, $X_4$, $X_6$, and that
\[
[X_1,X_5]=X_6,
\qquad
[X_1,X_0]=X_3X_4^{-1}X_5^{-1}.
\]
As in the previous cases, commuting with $X_6$ implies that over a field
\[
X_1=t_1(b^3)t_2(1/b^2)x_\alpha(b_1)x_\beta(b_2)x_{\alpha+\beta}(b_3)x_{\alpha+2\beta}(b_4)x_{\alpha+3\beta}(b_5)x_{2\alpha+3\beta}(b_6).
\]
Using $[X_1,X_5]=X_6$ gives $b^3=1$ and $b_1=1$.
Commuting with $X_4$ then yields $b=1$, $b_2=0$, and $b_3=-d$.
Commuting with $X_3$ gives
\[
b_4=2d^2+d-\frac{2}{3}e.
\]
Finally, from $[X_1,X_0]=X_3X_4^{-1}X_5^{-1}$ one obtains
\[
e=\frac{3}{2}(d^2+d),\qquad b_5=-\frac{3}{2}d^2+\frac{5}{2}d-f.
\]
Therefore
\[
X_1=x_\alpha(1)x_{\alpha+\beta}(-d)x_{\alpha+2\beta}(d^2)x_{\alpha+3\beta}\!\left(-\frac{3}{2}d^2+\frac{5}{2}d-f\right)x_{2\alpha+3\beta}(k),
\]
and
\[
X_2=X_1^{-1}X_0=x_\beta(1)x_{\alpha+\beta}(d)x_{\alpha+2\beta}(-d^2+2d)x_{\alpha+3\beta}\!\left(\frac{9}{2}d^2-\frac{11}{2}d+f\right)x_{2\alpha+3\beta}(3d^3-3d^2-k).
\]
Since $x_\alpha(1)$ and $X_1$ are conjugate, we may use the fact that $(X_1-1)^3=0$; this gives the additional relation
\[
f=-d^3-\frac{3}{2}d^2+\frac{5}{2}d.
\]
Likewise, since $x_\beta(1)$ and $X_2$ are conjugate, $(X_2-1)^4=0$, which yields
\[
k=\frac{1}{4}d^4+\frac{3}{2}d^3+\frac{1}{4}d^2.
\]
Substituting these expressions we obtain
\begin{equation}\label{eq:G2-X1-final}
X_1=x_\alpha(1)x_{\alpha+\beta}(-d)x_{\alpha+2\beta}(d^2)x_{\alpha+3\beta}(d^3)x_{2\alpha+3\beta}\!\left(\frac{1}{4}d^4+\frac{3}{2}d^3+\frac{1}{4}d^2\right),
\end{equation}
\begin{equation}\label{eq:G2-X2-final}
X_2=x_\beta(1)x_{\alpha+\beta}(d)x_{\alpha+2\beta}(-d^2+2d)x_{\alpha+3\beta}(-d^3+3d^2-3d)x_{2\alpha+3\beta}\!\left(-\frac{1}{4}d^4+\frac{3}{2}d^3-\frac{13}{4}d^2\right).
\end{equation}

\subsection{Normalization of the images of positive root elements}

We want to find an element $g\in C(X_0)$ such that
\[
g\,\varphi(x_\gamma(1))\,g^{-1}=x_\gamma(1)
\qquad\text{for all }\gamma\in\Phi^+.
\]
A direct calculation shows that
\[
g=x_\alpha(-d)x_\beta(-d)x_{\alpha+\beta}\left( -\frac{d^2+d}{2} \right)
x_{\alpha+2\beta}\left( \frac{2d^3}{3}+\frac{d^2}{2}-\frac{d}{6} \right)
x_{\alpha+3\beta}\left( \frac{3d^4}{4} + \frac{d^3}{2} - \frac{d^2}{4}\right)
\]
has exactly this property.
Hence, after conjugating $\varphi$ by $g$, we may assume that
\[
X_1=x_\alpha(1),\qquad X_2=x_\beta(1),\qquad X_3=x_{\alpha+\beta}(1),
\]
\[
X_4=x_{\alpha+2\beta}(1),\qquad X_5=x_{\alpha+3\beta}(1),\qquad X_6=x_{2\alpha+3\beta}(1).
\]

\subsection{The images of diagonal involutions}

Consider
\[
H_1=\varphi(h_\alpha(-1)),\qquad H_2=\varphi(h_\beta(-1)).
\]
The involution $H_1$ commutes with $X_1$ and $X_4$ and inverts $X_2$, $X_3$, $X_5$, $X_6$.
Over a field, writing $H_1$ in Bruhat form and using these relations shows first that the Weyl part is trivial, then that the torus part is $t_2(-1)=h_\alpha(-1)$, and finally that all positive-root parameters vanish except possibly the highest-root one.
Thus over a field
\[
H_1=t_2(-1)x_{2\alpha+3\beta}(b_6).
\]
For $H_2$, which commutes with $X_2$ and $X_6$ and inverts $X_1$, $X_3$, $X_4$, $X_5$, the same argument yields
\[
H_2=t_1(-1)x_{2\alpha+3\beta}(c_6).
\]
Since $H_2^2=1$, we obtain $c_6=0$, hence $H_2=h_\beta(-1)$.
Over a local ring, the same short Gauss argument shows that
\[
H_2=h_\beta(-1).
\]
Returning to $H_1$, the additional relation $[H_1,H_2]=1$ forces all root parameters except the highest one to vanish, and commuting with $X_1$ and $X_4$ yields
\[
H_1=h_\alpha(-1)x_{2\alpha+3\beta}(b_6).
\]
Conjugating our endomorphism by $x_{2\alpha+3\beta}(b_6/2)$ removes this extra highest-root factor without changing any of the positive root elements or $H_2$.
Thus after this final normalization we may assume
\[
H_1=h_\alpha(-1),\qquad H_2=h_\beta(-1).
\]

\subsection{\texorpdfstring{The image of $w_\beta(1)$}{The image of wbeta(1)}}

Let $W_2:=\varphi(w_\beta(1))$.
Then
\[
[W_2,H_2]=1,\qquad W_2^2=H_2,\qquad W_2H_1=H_1H_2W_2,
\]
and also
\[
W_2X_6=X_6W_2,\qquad W_2X_1=X_5W_2,\qquad W_2X_3=X_4W_2.
\]
Over a field, the relation $[W_2,H_2]=1$ implies that in the Bruhat form of $W_2$ only the root directions fixed by $h_\beta(-1)$ may occur.
Using in addition $W_2H_1=H_1H_2W_2$, one sees that all root parameters vanish and that the Weyl part must send
\[
\beta\mapsto\pm\beta,\qquad \alpha\mapsto\pm(\alpha+3\beta),\qquad 2\alpha+3\beta\mapsto 2\alpha+3\beta.
\]
Hence the Weyl part is $w_\beta(1)$, and we are left with a torus factor:
\[
W_2=t_1(a_1)t_2(a_2)w_\beta(1).
\]
From $W_2X_3=X_4W_2$ one obtains $a_1a_2^2=1$, and from $W_2X_1=X_5W_2$ one gets $a_1a_2^3=1$.
Together with the condition that the torus commutes with $X_6$, this implies $a_1=a_2=1$.
Thus over a field,
\[
W_2=w_\beta(1).
\]
Over a local ring one writes $W_2$ in short Gauss form with Weyl part $w_\beta(1)$.
The relations with $H_1$ and $H_2$ eliminate all negative and positive root parameters, leaving again only a torus factor.
The same comparison as in the field case then shows that the torus part is trivial.
Hence
\[
W_2=w_\beta(1).
\]
The argument for $W_1:=\varphi(w_\alpha(1))$ is completely analogous, and therefore
\[
\varphi(w_\alpha(1))=w_\alpha(1),\qquad \varphi(w_\beta(1))=w_\beta(1).
\]
Since the positive root elements together with $w_\alpha(1)$ and $w_\beta(1)$ generate all root elements with parameter~$1$,
we conclude that $\varphi$ fixes every $x_\gamma(1)$, $\gamma\in\Phi$.

\subsection{\texorpdfstring{Finalization of the $\mathbf G_2$ case}{Finalization of the G2 case}}

Since $\varphi$ fixes all $x_\gamma(1)$, $w_\gamma(1)$, and $h_\gamma(-1)$, and commutes with the torus action,
it preserves root subgroups.
Therefore, for each root $\gamma$ there exists a ring endomorphism $\rho_\gamma\colon R\to R$ such that
\[
\varphi(x_\gamma(r))=x_\gamma(\rho_\gamma(r))
\qquad (r\in R).
\]
Because $\varphi$ is locally inner, it preserves traces in the adjoint representation.
For a long root $\gamma$ in type $\mathbf G_2$ one has
\[
\tr(x_\gamma(t)x_{-\gamma}(s))=s^2t^2-Ast+B
\]
for certain integers $A,B$.
Applying the same symmetric-difference argument as in Sections~4 and~5, we get
\[
\rho_\gamma(t)=t\qquad\text{for all }t\in R
\]
for every long root $\gamma$.
Now use the commutator relations \eqref{eq:G2ab}--\eqref{eq:G2ab-a2b}.
Since the long-root maps are the identity and since $\varphi(x_\beta(1))=x_\beta(1)$, it follows successively from these relations that the short-root maps are also the identity.
Hence $\varphi$ acts identically on all root subgroups and therefore on $E(\mathbf G_2)$.
Undoing the normalization shows that every locally inner endomorphism of $E(\mathbf G_2)$ is inner.

\begin{thm}\label{thm:G2-elementary}
Let $R$ be a commutative ring with $1/2,1/3\in R$.
Then every locally inner endomorphism of $E_{\mathrm{ad}}(\mathbf G_2,R)$ is inner.
Equivalently, $E_{\mathrm{ad}}(\mathbf G_2,R)$ is $\Sha$-rigid.
\end{thm}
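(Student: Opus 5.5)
The proof will assemble the normalizations of Section~6, carried out in the order fixed there. Let $\varphi$ be a locally inner endomorphism of $E(\mathbf G_2)$. First compose $\varphi$ with an inner automorphism so that $\varphi(X_0)=X_0$, where $X_0=x_\alpha(1)x_\beta(1)$. This is possible because $\varphi(X_0)$ is conjugate to $X_0$ in $E(\mathbf G_2)$. Every subsequent normalization is conjugation by an element of $C(X_0)\cap E(\mathbf G_2)$ or by some $x_{2\alpha+3\beta}(\cdot)$, so it is again inner in $E(\mathbf G_2)$. One point needs care here: the computed centralizer of $X_0$ lies in $U\subseteq E(\mathbf G_2)$, so all conjugating elements do lie in the elementary group.

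Next, determine successively $X_6,X_5,X_4,X_3$ and then $X_1,X_2$, as in the preceding subsections. In each step the element is first pinned down over every residue field by the Bruhat decomposition, then over each $R_J$ by the short Gauss comparison, and finally globally by the embedding of Proposition~\ref{inlocal}. The key points are these:
\begin{itemize}
\item $2\in R^\times$ kills the parameter $b$ in $X_6$;
\item the relations $[X_3,X_0]=X_4^2X_5^3X_6^6$ and $[X_1,X_0]=X_3X_4^{-1}X_5^{-1}$ force the scalar $a=1$ and express everything through a single parameter $d$;
\item the unipotency constraints $(X_1-1)^3=0$ and $(X_2-1)^4=0$ fix the remaining parameters $f$ and $k$.
\end{itemize}
Conjugating by the explicit $g\in C(X_0)$ then makes $\varphi(x_\gamma(1))=x_\gamma(1)$ for all $\gamma\in\Phi^+$. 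After that, treat the involutions $h_\alpha(-1)$ and $h_\beta(-1)$, absorbing the highest-root factor by conjugation with $x_{2\alpha+3\beta}(b_6/2)$, which uses $2\in R^\times$. Then treat the Weyl elements, obtaining $\varphi(w_\alpha(1))=w_\alpha(1)$ and $\varphi(w_\beta(1))=w_\beta(1)$, and hence $\varphi(x_\gamma(1))=x_\gamma(1)$ for all $\gamma\in\Phi$.

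Finally, show that each root subgroup is preserved. Since $\varphi$ fixes the elements of $H$ arising from the $w$'s and the $h(-1)$'s, and the root subgroup $X_\gamma$ can be characterized via centralizers of the fixed elements $x_\gamma(1)$ (as in Section~4, using the double centralizer), we get $\varphi(x_\gamma(r))=x_\gamma(\rho_\gamma(r))$ with $\rho_\gamma$ additive and $\rho_\gamma(1)=1$. For long $\gamma$, trace invariance of $x_\gamma(t)x_{-\gamma}(s)$ together with the symmetric-difference trick at $s=1$ gives $c(\rho_\gamma(t)-t)=0$, where $c$ is an integer determined by the coefficient $A$. I expect $c$ to be a product of powers of $2$ and $3$, hence a unit, so $\rho_\gamma=\mathrm{id}$. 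For the short roots, apply \eqref{eq:G2a-a3b}, namely $[x_\alpha(1),x_{\alpha+3\beta}(u)]=x_{2\alpha+3\beta}(u)$, which gives $\rho_{\alpha+3\beta}=\mathrm{id}$ directly. Then use \eqref{eq:G2ab} with $t=1$, whose $x_{\alpha+\beta}$-coordinate is linear in $u$, to get $\rho_\beta=\mathrm{id}$. The remaining short roots follow from \eqref{eq:G2ab2} and \eqref{eq:G2a2b-b} using $2,3\in R^\times$. Hence the normalized $\varphi$ is the identity on generators, and the original $\varphi$ is inner.

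The main obstacle is the lifting from residue fields to local rings and then to $R$ for $X_5$, $X_4$, $X_3$ and the Weyl elements. There the short Gauss comparison must show that nilpotent negative-root and torus parameters vanish exactly, not just modulo the radical. I would handle this as in the computation for $X_6$: compare the torus components and use that $2,3$ and the structure constants are units, so every unwanted parameter is forced to be nilpotent and then, via a linear relation with unit coefficient, to be zero. The second delicate point is the explicit value of $A$, which must be checked to involve only the primes $2$ and $3$.
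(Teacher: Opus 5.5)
Your proposal follows the paper's proof step for step. You normalize $\varphi(X_0)=X_0$, pin down $X_6,X_5,X_4,X_3,X_1,X_2$ first over residue fields and then over each $R_J$ by short Gauss comparison, conjugate by the explicit $g\in C(X_0)$, and fix $h_\alpha(-1)$, $h_\beta(-1)$ (using $x_{2\alpha+3\beta}(b_6/2)$) and $w_\alpha(1)$, $w_\beta(1)$; you then finish with the trace symmetric-difference trick for long roots and the commutator relations for short roots. The last step needs a small repair: $\alpha+3\beta$ is a long root, already covered by the trace argument, where the $A$-term is odd in $t$ and cancels, so the constant is $4$ whatever $A$ is. Also, the linear $x_{\alpha+\beta}$-coordinate of \eqref{eq:G2ab} only gives $\rho_\beta=\rho_{\alpha+\beta}$, so first get $\rho_{\alpha+\beta}=\mathrm{id}$ from the $x_{\alpha+3\beta}$-coordinate $3t$ of \eqref{eq:G2ab2} at $u=1$, or compare long-root coordinates in \eqref{eq:G2ab} to get $\rho_\beta(u)^3=u^3$ and polarize using $6\in R^\times$.
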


\begin{thm}\label{thm:G2-adjoint}
Let $R$ be a commutative ring with $1/2,1/3\in R$.
Then every locally inner endomorphism of $G_{\mathrm{ad}}(\mathbf G_2,R)$ is inner.
Equivalently, $G_{\mathrm{ad}}(\mathbf G_2,R)$ is $\Sha$-rigid.
\end{thm}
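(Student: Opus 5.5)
The plan follows the pattern of the proofs of Theorem~\ref{thm:A2-adjoint} and Theorem~\ref{thm:B2-adjoint}. The idea is to restrict $\varphi$ to the elementary subgroup, normalize it to be the identity there, and then use normality of $E$ together with the triviality of its centralizer to force $\varphi=\mathrm{id}$ on all of $G$.

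\textbf{Step 1: restriction to $E$.} Let $\varphi\colon G(\mathbf G_2)\to G(\mathbf G_2)$ be locally inner. Since $\mathbf G_2$ has rank $2$, $E(\mathbf G_2)\trianglelefteq G(\mathbf G_2)$ by Suslin--Kopeiko--Taddei. In particular $E(\mathbf G_2)$ is the normal closure in $G$ of the root elements $x_\gamma(t)$. Each $\varphi(x_\gamma(t))$ is $G$-conjugate to $x_\gamma(t)$, hence lies in $E$. Since $E$ is generated by these elements, $\varphi(E)\subseteq E$.

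There is a subtlety. Theorem~\ref{thm:G2-elementary} asks that $\varphi|_E$ be locally inner \emph{in $E$}, whereas we only know conjugacy by elements of $G$. I will handle this by checking that the proof of Theorem~\ref{thm:G2-elementary} only ever used the following:
\begin{itemize}
\item trace invariance;
\item nilpotency conditions such as $(X_1-1)^3=0$;
\item the existence of some conjugating element over the local rings $R_J$, written in Gauss form with a torus factor $t\in T$ allowed.
\end{itemize}
All of these hold for $G$-conjugacy. The normalizing conjugations used there (by $C(X_0)\subseteq U$, by $x_{2\alpha+3\beta}(b_6/2)$, and the initial one sending $\varphi(X_0)$ to $X_0$) can now be taken in $G$. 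With this reading, the proof of Theorem~\ref{thm:G2-elementary} shows that after composing $\varphi$ with an inner automorphism of $G$ we have $\varphi|_E=\mathrm{id}$.

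\textbf{Step 2: extension to $G$.} Take $g\in G$ and $x\in E$. Normality gives $gxg^{-1}\in E$, so
\[
gxg^{-1}=\varphi(gxg^{-1})=\varphi(g)\,x\,\varphi(g)^{-1}.
\]
Hence $g^{-1}\varphi(g)\in C_G(E)$. For rank $\ge2$, $C_G(E)$ is the centre of $G$, which is trivial in the adjoint group. One can also see this directly: an element centralizing all $x_\gamma(1)$, $\gamma\in\Phi$, lies in $U\cap V$ after localizing and reducing, as in the centralizer arguments above, and so is trivial. Therefore $\varphi(g)=g$ for all $g$, and undoing the normalization shows that the original $\varphi$ is inner.

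The main obstacle is Step 1: the passage from $G$-conjugacy to the hypotheses of Theorem~\ref{thm:G2-elementary}. If the re-inspection fails, I would fall back on a different route. Since $\varphi|_E$ is injective and preserves traces in the adjoint representation, I would run the normalization chain $X_0\to X_6\to\cdots\to W_2$ directly with $G$-conjugating elements. The proofs there only invoke Bruhat and Gauss forms with arbitrary torus parts $t\in T$, so they go through unchanged.
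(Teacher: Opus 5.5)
Your proposal is essentially the paper's proof: restrict to $E_{\mathrm{ad}}(\mathbf G_2,R)$, use Theorem~\ref{thm:G2-elementary} to normalize $\varphi$ so that $\varphi|_E=\mathrm{id}$, and then combine normality of $E$ with $C_G(E)=Z(G)=1$ to conclude $\varphi=\mathrm{id}$. The paper's one-line proof leaves two points implicit, and you make both explicit: that $\varphi(E)\subseteq E$, via normality, and that the elementary-case argument only needs conjugacy in $G$ (torus parts in $T$, normalizing conjugations by elements of $G$).
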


\begin{proof}
By Theorem~\ref{thm:G2-elementary}, after an inner normalization a locally inner endomorphism of $G_{\mathrm{ad}}(\mathbf G_2,R)$
acts trivially on $E_{\mathrm{ad}}(\mathbf G_2,R)$.
Since $\mathbf G_2$ has rank $2$, the elementary subgroup is normal in the full adjoint group, and the same centralizer argument as in the proof of Theorem~\ref{thm:A2-adjoint} shows that the normalized endomorphism is the identity.
Undoing the normalization yields the claim.
\end{proof}

\medskip
This completes the proof of Theorem~\ref{thm:main}.

\section*{Acknowledgment}
The authors are sincerely grateful to Pavel Gvozdevsky and Boris Kunyavskii
for many helpful discussions, insightful comments, and valuable suggestions
that improved this paper.

During the preparation of this article, our friend and colleague Evgeny Plotkin
passed away. This work remained one of the last mathematical topics we discussed
with him, and in our final email exchange we were speaking about where to submit it.
We remember him with deep gratitude and affection, and dedicate this paper to his memory.

\bibliographystyle{plain}
\bibliography{sha_rigidity_refs}

\begin{thebibliography}{99}

\bibitem{Abe1969}
E.~Abe,
\emph{Chevalley groups over local rings},
T\=ohoku Math.\ J.\ (2) \textbf{21} (1969), no.~3, 474--494.

\bibitem{AtiyahMacdonald}
M.~F.~Atiyah, I.~G.~Macdonald,
\emph{Introduction to Commutative Algebra},
Addison--Wesley, Reading, MA, 1969.

\bibitem{BuninaKunyavskii2024}
E.~Bunina, B.~Kunyavskii,
\emph{$\Sha$-rigidity of Chevalley groups over local rings},
J. Group Theory \textbf{28} (2025), no.~5, 1143--1161.

\bibitem{BK}
B.~Kunyavski\u\i,
\emph{Local--global invariants of finite and infinite groups: Around Burnside from another side},
Expo. Math. \textbf{31} (2013), 256--273.

\bibitem{BK112}
T.~Ono,
\emph{A note on Shafarevich--Tate sets for finite groups},
Proc. Japan Acad. \textbf{74A} (1998), 77--79.

\bibitem{BK115}
T.~Ono,
\emph{``Shafarevich--Tate sets'' for profinite groups},
Proc. Japan Acad. \textbf{75A} (1999), 96--97.

\bibitem{Sah}
C.-H.~Sah,
\emph{Automorphisms of finite groups},
J. Algebra \textbf{10} (1968), 47--68; addendum: \textit{ibid.} \textbf{44} (1977), 573--575.

\bibitem{Carter}
R.~W.~Carter,
\emph{Simple Groups of Lie Type},
Wiley, London, 1972.

\bibitem{Steinberg}
R.~Steinberg,
\emph{Lectures on Chevalley Groups},
University Lecture Series, Vol.~66, Amer. Math. Soc., Providence, RI, 2016.

\bibitem{v43}
N.~A.~Vavilov,
\emph{Structure of Chevalley groups over commutative rings},
in: \emph{Proc. Conf. Non-associative algebras and related topics (Hiroshima, 1990)},
World Sci. Publ., 1991, pp.~219--335.

\bibitem{VavPlotk1}
N.~A.~Vavilov, E.~B.~Plotkin,
\emph{Chevalley groups over commutative rings. I. Elementary calculations},
Acta Appl. Math. \textbf{45} (1996), 73--115.

\bibitem{M}
H.~Matsumoto,
\emph{Sur les sous-groupes arithm\'etiques des groupes semi-simples d\'eploy\'es},
Ann. Sci. \'Ecole Norm. Sup. (4) \textbf{2} (1969), 1--62.

\bibitem{v41}
G.~Taddei,
\emph{Normalit\'e des groupes \'el\'ementaires dans les groupes de Chevalley sur un anneau},
Contemp. Math. \textbf{55} (1986), Part~II, 693--710.

\bibitem{Vas}
L.~N.~Vaserstein,
\emph{On normal subgroups of Chevalley groups over commutative rings},
T\^ohoku Math. J. (2) \textbf{38} (1986), no.~3, 219--230.

\bibitem{Cn}
P.~Cohn,
\emph{On the structure of the $\GL_2$ of a ring},
Publ. Math. Inst. Hautes \'Etudes Sci. \textbf{30} (1966), 365--413.

\bibitem{Humphreys}
J.~E.~Humphreys,
\emph{Introduction to Lie Algebras and Representation Theory},
Graduate Texts in Mathematics, Vol.~9, Springer-Verlag, New York, 1972.

\bibitem{Abe1993}
E.~Abe,
\emph{Automorphisms of Chevalley groups over commutative rings},
Algebra i Analiz \textbf{5} (1993), no.~2, 74--90;
English translation: St. Petersburg Math. J. \textbf{5} (1994), no.~2, 287--303.

\bibitem{AbeHurley1988}
E.~Abe, J.~Hurley,
\emph{Centers of Chevalley groups over commutative rings},
Comm. Algebra \textbf{16} (1988), no.~1, 57--74.

\bibitem{Humphreys1975}
J.~E.~Humphreys,
\emph{Linear Algebraic Groups},
Springer-Verlag, New York--Heidelberg--Berlin, 1975.

\bibitem{BK21}
W.~Burnside,
\emph{Theory of Groups of Finite Order},
2nd ed., Cambridge Univ. Press, Cambridge, 1911;
reprinted by Dover Publications, New York, 1955.

\bibitem{BK22}
W.~Burnside,
\emph{On the outer automorphisms of a group},
Proc. Lond. Math. Soc. \textbf{11} (1913), 40--42.

\end{thebibliography}
\end{document}